\newtheorem{thm}{Theorem}[section]
\newtheorem{cor}[thm]{Corollary}
\newtheorem{lem}[thm]{Lemma}
\theoremstyle{definition}
\newtheorem{defn}[thm]{Definition}
\theoremstyle{remark}
\newtheorem*{ex}{Example}
\numberwithin{equation}{section}
\begin{document}

\title[3-Parameter Generalized Quaternions]{3-Parameter Generalized
Quaternions}
\author{Tuncay Deniz \c{S}ent\"{u}rk*}
\address{Department of Mathematics, Institute of Science and Technology\\
	Kastamonu University\\
	Kastamonu, 37150\\
	Turkey}
\email{tuncaydenizsenturk@gmail.com}

\thanks{*Corresponding author.}

\author{Zafer \"{U}nal}
\address{Department of Mathematics, Faculty of Arts and Science\\
	Kastamonu University\\
	Kastamonu, 37150\\
	Turkey}
\email{zunal@kastamonu.edu.tr}
\subjclass{14A20; 14A22; 15A66; 70G55; 70G65}
\keywords{3-parameter generalized quaternion, Lie group, matrix representation of quaternions, Lie algebra, De Moivre's formula, Euler's formula}
\date{October 25, 2019}

\begin{abstract}
In this article, we give the most general form of the quaternions algebra
depending on 3-parameters. We define 3-parameter generalized quaternions
(3PGQs) and study on various properties and applications. Firstly we present
the definiton, the multiplication table and other properties of 3PGQs such
as addition-substraction, multiplication and multiplication by scalar
operations, unit and inverse elements, conjugate and norm. We give matrix
representation and Hamilton operators for 3PGQs.We get
polar represenation, De Moivre's and Euler's formulas with the matrix
representations for 3PGQs. Besides, we give relations among the powers of
the matrices associated with 3PGQs. Finally, Lie group and Lie algebra
are studied and their matrix representations are shown. Also the Lie
multiplication and the killing bilinear form are given.
\end{abstract}

\maketitle

%-------------------------------------------------------------------------
% editorial commands: to be inserted by the editorial office
%
%\firstpage{1} \volume{228} \Copyrightyear{2004} \DOI{003-0001}
%
%
%\seriesextra{Just an add-on}
%\seriesextraline{This is the Concrete Title of this Book\br H.E. R and S.T.C. W, Eds.}
%
% for journals:
%
%\firstpage{1}
%\issuenumber{1}
%\Volumeandyear{1 (2004)}
%\Copyrightyear{2004}
%\DOI{003-xxxx-y}
%\Signet
%\commby{inhouse}
%\submitted{March 14, 2003}
%\received{March 16, 2000}
%\revised{June 1, 2000}
%\accepted{July 22, 2000}
%
%
%
%---------------------------------------------------------------------------
%Insert here the title, affiliations and abstract:
%

%----------Author 1

%\thanks{This work was completed with the support of our
%\TeX-pert.}
%----------Author 2

%----------classification, keywords, date

%----------additions
%\dedicatory{To my boss}
%%% ----------------------------------------------------------------------

%%% ----------------------------------------------------------------------
%%% ----------------------------------------------------------------------
%\tableofcontents

\section{Introduction}

Irish mathematician Sir William Rowan Hamilton started working on the
complex numbers in 1830. Hamilton wanted to generalize these numbers.
Firstly, he wanted to express these numbers as composition of two imaginary
numbers and one real number. So in the beginning he hoped to expand the
complex numbers into 3-dimensional space. Although he could do addition and
subtraction with these triples, he could not define norm with these triples.
For years he thought about this issue and made various researches. Finally,
on 16 October 1843, he defined real quaternions as:%
\begin{equation*}
\mathbb{H}=\left\{ a+be_{1}+ce_{2}+de_{3}\mid a,b,c,d\in \mathbb{R}%
,e_{1}^{2}=e_{2}^{2}=e_{3}^{2}=-1,e_{1}e_{2}e_{3}=-1\right\}
\end{equation*}%
\cite{ham0, ham3, ham4, ham1, ham5, ham2, hal}. Also in \cite{war}, all the
properties of quaternions, quaternion algebra and applications are explained
by Ward. Following the identification of the real quaternions, in 1849,
split-quaternion, also known as para-quaternion, co-quaternion,
pseudo-quaternion in the literature, was defined by Sir James Cockle in \cite{coc}:%
\begin{equation*}
\mathbb{P}=\left\{ a+be_{1}+ce_{2}+de_{3}\mid a,b,c,d\in \mathbb{R}%
,-e_{1}^{2}=e_{2}^{2}=e_{3}^{2}=1,e_{1}e_{2}e_{3}=1\right\} .
\end{equation*}%
Cockle has brought a new perspective to the quaternions. Hamilton
quaternions with complex coefficients are called biquaternions. The
biquaternions was described by Sir William Clifford in 1871 \cite{clif}. In 1924 and 1928, Leonard Eugene Dickson and Lois Wilfred Griffiths wrote two articles on generalized quaternions \cite{dic, gri}. The set of generalized quaternions with two parameter:
\begin{equation*}
\begin{array}{r}
\mathbb{H}_{\lambda ,\mu }=\left\{ a+be_{1}+ce_{2}+de_{3}\mid
a,b,c,d,\lambda ,\mu \in \mathbb{R},e_{1}^{2}=-\lambda ,e_{2}^{2}=-\mu
,\right. \\ 
\left. e_{3}^{2}=-\lambda \mu ,e_{1}e_{2}e_{3}=-\lambda \mu \right\} .%
\end{array}%
\end{equation*}%
These quaternions are known as generalized quaternions in the literature.
Throughout the article, we will refer as 2-parameters generalized
quaternions (2PGQs) for shortness of the impressions. In the set of the 2PGQs, if
as \linebreak $\lambda =\mu =1$ is taken, then we obtain Hamilton
quaternions. If as $\lambda =-\mu =1$ is taken, then we achieve set of the
split-quaternions.

It is possible to see the effects of Hamilton's discovery, which is about
two centuries ago, in many areas from physics to computer graphics. In the
current literature, quaternions are also associated with number sequences.
These studies can be found in \cite{sen, ahm, tok, bil, tas, swa, hor}.

In this article, we will go far beyond the generalization mentioned above
and we will give the most general form of the quaternions algebra depending
on 3-parameters.

\section{3-Parameter Generalized Quaternions}

In this section, we define the 3-parameter generalized quaternions and form
the algebra, inspired by the work of Hamilton, Cockle, Dickson and Griffiths.

\begin{defn}
The following set is called set of 3-parameter generalized quaternions
(3PGQs): 
\begin{equation*}
\begin{array}{r}
\mathbb{K}=\{a_{0}+a_{1}e_{1}+a_{2}e_{2}+a_{3}e_{3}\mid a_{0},a_{1},a_{2},a_{3},\lambda _{1},\lambda
_{2},\lambda _{3}\in \mathbb{R},\text{ }e_{1}^{2}=-\lambda _{1}\lambda _{2},
\\ 
e_{2}^{2}=-\lambda _{1}\lambda _{3},\text{ }e_{3}^{2}=-\lambda _{2}\lambda
_{3},\text{\ }e_{1}e_{2}e_{3}=-\lambda _{1}\lambda _{2}\lambda _{3}\}.%
\end{array}%
\end{equation*}
\end{defn}

Each element $p=a_{0}+a_{1}e_{1}+a_{2}e_{2}+a_{3}e_{3}$ of the set $\mathbb{K}$ is called a $%
3$-parameter generalized quaternion (3PGQ). Here the real numbers $a_{0},a_{1},a_{2},a_{3}$
are called components of $p$.\ The base vectors $1,e_{1},e_{2},e_{3}$ of the
3PGQs comply with the following multiplication table:%
\begin{equation}
\begin{array}{c|rrrr}
\cdot & 1 & e_{1} & e_{2} & e_{3} \\ \hline
1 & 1 & e_{1} & e_{2} & e_{3} \\ 
e_{1} & e_{1} & -\lambda _{1}\lambda _{2} & \lambda _{1}e_{3} & -\lambda
_{2}e_{2} \\ 
e_{2} & e_{2} & -\lambda _{1}e_{3} & -\lambda _{1}\lambda _{3} & \lambda
_{3}e_{1} \\ 
e_{3} & e_{3} & \lambda _{2}e_{2} & -\lambda _{3}e_{1} & -\lambda
_{2}\lambda _{3}%
\end{array}
\notag
\end{equation}%
According to this multiplication table, $\mathbb{K}=Sp\left\{ 1,e_{1},e_{2},e_{3}\right\} $. \newline
Special cases:\newline
i. \hspace*{0.21cm} If $\lambda _{1}=1,$ \ $\lambda _{2}=\lambda ,$ \ $\lambda _{3}=\mu $%
, then the algebra of 2PGQs is obtained.\newline
ii. \hspace*{0.2cm} If $\lambda _{1}=1,$ \ $\lambda _{2}=1,$ \ $\lambda _{3}=-1$, then
gives us the algebra of split quaternions.\newline
iii.\hspace*{0.15cm} If $\lambda _{1}=1,$ \ $\lambda _{2}=1,$ \ $\lambda _{3}=1$, then
the algebra of Hamilton quaternions is achieved.\newline
iv. \hspace*{0.1cm}If $\lambda _{1}=1,$ \ $\lambda _{2}=1,$ \ $\lambda _{3}=0$, then
the algebra of semi-quaternions is attained.\newline
v. \hspace*{0.2cm}$\lambda _{1}=1,$ \ $\lambda _{2}=-1,$ \ $\lambda _{3}=0$ then we get
the algebra of split semi-quaternions.\newline
vi. \ \ \ $\lambda _{1}=1,$ \ $\lambda _{2}=0,$ \ $\lambda _{3}=0$ then
algebra of $1/4$-quaternions is achieved. \newline
Of course, it is possible to work in more specific quaternion algebras
according to $\lambda _{i\in \left\{ 1,2,3\right\} }$.

Throughout the article, we will consider special cases for the $\lambda _{i\in \left\{ 1,2,3\right\} }$ values given above.

 Any 3PGQ $%
p=a_{0}+a_{1}e_{1}+a_{2}e_{2}+a_{3}e_{3}$ consists of two parts, the scalar and the vector
part:%
\begin{equation*}
p=S_{p}+V_{p}
\end{equation*}%
where 
\begin{equation*}
S_{p}=a_{0}\text{ }\ \text{ve }V_{p}=a_{1}e_{1}+a_{2}e_{2}+a_{3}e_{3}.
\end{equation*}

\begin{defn}
Let $p$ be a 3PGQ. If $S_{p}=0$, then $\ p$ is called 3-parameter
generalized pure-quaternion (3PGPQ) or 3-parameter generalized vector
(3PGV). Let us show the set of 3-parameter generalized vectors is as
follows: 
\begin{equation*}
\text{Im}\left( \mathbb{K}\right) =\left\{
a_{1}e_{1}+a_{2}e_{2}+a_{3}e_{3}\mid a_{1},a_{2},a_{3}\in \mathbb{R}\right\}
.
\end{equation*}
\end{defn}

Equality, addition, multiplication by scalar and multiplication
operations are defined on $\mathbb{K}$ as following:

Let $p=a_{0}+a_{1}e_{1}+a_{2}e_{2}+a_{3}e_{3}$ and $%
q=b_{0}+b_{1}e_{1}+b_{2}e_{2}+b_{3}e_{3}$ be 3PGQs and $\alpha $ be a real
number.

Equality:$\ p=q\Leftrightarrow a_{0}=b_{0},$ $a_{1}=b_{1},$ $a_{2}=b_{2},$ $%
a_{3}=b_{3}$.

Addition: $p+q=\left( S_{p}+S_{q}\right) +\left( V_{p}+
V_{q}\right) =\left( a_{0}+b_{0}\right) +\left( a_{1}+b_{1}\right)
e_{1}+\left( a_{2}+b_{2}\right) e_{2}+\left( a_{3}+b_{3}\right) e_{3}$.

Multiplication by scalar: The following operation is called multiplication
by scalar or external operation: 
\begin{equation*}
\begin{array}{rrr}
\odot :\mathbb{R\times K} & \rightarrow & \mathbb{K\hspace*{6.12cm}} \\ 
(c ,p) & \rightarrow & c \odot p=:c p=c a_{0}+c
a_{1}e_{1}+c a_{2}e_{2}+c a_{3}e_{3}%
\end{array}%
\end{equation*}%
$\bigskip $Multiplication: 
\begin{equation*}
\begin{array}{rrr}
\times :\mathbb{K\times K} & \rightarrow & \mathbb{K}\hspace*{1.37cm} \\ 
(p,q) & \rightarrow & p\times q=pq%
\end{array}%
\end{equation*}%
if $p$ and $q$ is multiplied\ according to the multiplication table, then we
have: 
\begin{align*}
pq& =\left( a_{0}b_{0}-\lambda _{1}\lambda _{2}a_{1}b_{1}-\lambda
_{1}\lambda _{3}a_{2}b_{2}-\lambda _{2}\lambda _{3}a_{3}b_{3}\right) \\
& +e_{1}\left( a_{0}b_{1}+b_{0}a_{1}+\lambda _{3}\left(
a_{2}b_{3}-a_{3}b_{2}\right) \right) \\
& +e_{2}\left( a_{0}b_{2}+b_{0}a_{2}+\lambda _{2}\left(
a_{3}b_{1}-a_{1}b_{3}\right) \right) \\
& +e_{3}\left( a_{0}b_{3}+a_{3}b_{0}+\lambda _{1}\left(
a_{1}b_{2}-a_{2}b_{1}\right) \right) .
\end{align*}%
We can formulate this result as follows:%
\begin{align*}
pq& =\left( S_{p}+V_{p}\right) \left( S_{q}+V_{q}\right) \\
& =S_{p}S_{q}+S_{p}V_{q}+S_{q}V_{p}+V_{p}V_{q} \\
& =S_{p}S_{q}-f\left( V_{p},V_{q}\right) +S_{p}V_{p}+S_{q}V_{q}+V_{p}\wedge
V_{q},
\end{align*}%
where 
\begin{equation*}
\begin{array}{rrr}
f:\text{Im}\left( \mathbb{K}\right) \mathbb{\times }\text{Im}\left( \mathbb{K%
}\right) & \rightarrow & \mathbb{R\hspace*{6.65cm}} \\ 
\left( V_{p},V_{q}\right) & \rightarrow & f\left( V_{p},V_{q}\right)
=\lambda _{1}\lambda _{2}a_{1}b_{1}+\lambda _{1}\lambda
_{3}a_{2}b_{2}+\lambda _{2}\lambda _{3}a_{3}b_{3}%
\end{array}%
\end{equation*}%
and 
\begin{equation*}
\begin{array}{ll}
\wedge :\text{Im}\left( \mathbb{K}\right) \mathbb{\times }\text{Im}\left( 
\mathbb{K}\right) & \rightarrow \text{Im}\left( \mathbb{K}\right) \\ 
\hspace*{1.85cm}\left( V_{p},V_{q}\right) & \rightarrow V_{p}\wedge
V_{q}=\left\vert 
\begin{array}{ccc}
\lambda _{3}e_{1} & \lambda _{2}e_{2} & \lambda _{1}e_{3} \\ 
a_{1} & a_{2} & a_{3} \\ 
b_{1} & b_{2} & b_{3}%
\end{array}%
\right\vert \\ 
& \hspace*{1.55cm}%
\begin{array}{l}
=\lambda _{3}\left( a_{2}b_{3}-a_{3}b_{2}\right) e_{1} \\ 
+\lambda _{2}\left( a_{3}b_{1}-a_{1}b_{3}\right) e_{2}+\lambda _{1}\left(
a_{1}b_{2}-a_{2}b_{1}\right) e_{3}.%
\end{array}%
\end{array}%
\end{equation*}%
If $p=V_{p}=a_{1}e_{1}+a_{2}e_{2}+a_{3}e_{3}$ and $%
q=V_{q}=b_{1}e_{1}+b_{2}e_{2}+b_{3}e_{3}$ then the multiplication of $p$ and 
$q$ is: 
\begin{equation*}
\begin{array}{rrr}
\times :\mathbb{K\times K} & \rightarrow & \mathbb{K}\hspace*{5.9cm} \\ 
(V_{p},V_{q}) & \rightarrow & V_{p}\times V_{q}=:V_{p}V_{q}=-f\left(
V_{p},V_{q}\right) +V_{p}\wedge V_{q}%
\end{array}%
\end{equation*}%
There are two special cases:\newline
i.\ \ If $V_{p}\perp V_{q}$, then $V_{p}V_{q}=V_{p}\wedge V_{q},$\newline
ii.\hspace*{0.05cm} If $V_{p}\parallel V_{q}$, then $V_{p}V_{q}=-f\left(
V_{p},V_{q}\right) .$

\begin{thm}
Let $p,q$ and $r$ be 3PGVs. The following equations are satisfied:\newline
i. \ \ $p\wedge \left( q\wedge r\right) =f\left( p,r\right) q-f\left(
p,q\right) r,$\newline
ii. \ $\left( p\wedge q\right) \wedge r=f\left( p,r\right) q-f\left(
q,r\right) p.$
\end{thm}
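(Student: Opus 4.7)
The plan is to prove both identities by direct component-wise computation, leveraging the antisymmetry $u\wedge v = -v\wedge u$ (which is immediate from the determinantal definition) and the symmetry $f(u,v)=f(v,u)$ to halve the work: once (i) is established, we read off
\begin{equation*}
(p\wedge q)\wedge r \;=\; -\,r\wedge(p\wedge q) \;=\; -\bigl(f(r,q)p - f(r,p)q\bigr) \;=\; f(p,r)q - f(q,r)p,
\end{equation*}
which is exactly (ii). So the substantive task is to prove (i).

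To prove (i), I would write $p = a_1 e_1 + a_2 e_2 + a_3 e_3$, $q = b_1 e_1 + b_2 e_2 + b_3 e_3$, and $r = c_1 e_1 + c_2 e_2 + c_3 e_3$, compute $q\wedge r$ via the determinant formula, and substitute into a second determinant to expand $p\wedge(q\wedge r)$. Reading off the $e_1$-coefficient gives
\begin{equation*}
\lambda_1\lambda_3\,a_2(b_1 c_2 - b_2 c_1) \;+\; \lambda_2\lambda_3\,a_3(b_1 c_3 - b_3 c_1).
\end{equation*}
On the right-hand side, the $e_1$-coefficient of $f(p,r)q - f(p,q)r$ is $b_1 f(p,r) - c_1 f(p,q)$; upon expansion the two $\lambda_1\lambda_2\,a_1 b_1 c_1$ monomials cancel, and the four surviving terms match the display above exactly. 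The $e_2$- and $e_3$-coefficients are verified by entirely parallel computations, in each case with a single diagonal monomial $\lambda_i\lambda_j\,a_k b_k c_k$ cancelling against its twin from the opposite sum.

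The main obstacle is purely bookkeeping: two different pairings of the $\lambda_i$ appear — the products $\lambda_i\lambda_j$ carried by the bilinear form $f$ and the single factors $\lambda_i$ sitting in the top row of the wedge determinant — and one must track carefully how they multiply after two successive wedge products so as to recognise them as the $\lambda_i\lambda_j$ pairs produced by $f$ on the other side. Once (i) is verified component by component, (ii) drops out in one line from antisymmetry as indicated above, so no further calculation is required.
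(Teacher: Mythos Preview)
Your proof is correct. For part (i) you do exactly what the paper does: write $p,q,r$ in components, expand $p\wedge(q\wedge r)$ via two applications of the determinant formula, expand $f(p,r)q-f(p,q)r$, and match coefficients of $e_1,e_2,e_3$.

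For part (ii) there is a small but genuine difference. The paper simply says ``Similar to i, the existence of proof is seen,'' i.e.\ it invites the reader to repeat the full component computation with the roles of the vectors permuted. You instead observe that the determinantal definition makes $\wedge$ antisymmetric and that $f$ is symmetric, and then derive (ii) from (i) in one line via $(p\wedge q)\wedge r=-r\wedge(p\wedge q)$. This is a cleaner route: it halves the bookkeeping and exposes the structural reason the two identities are equivalent, whereas the paper's approach treats them as independent calculations.
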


\begin{proof}

i. If $p=a_{1}e_{1}+a_{2}e_{2}+a_{3}e_{3},$ $%
q=b_{1}e_{1}+b_{2}e_{2}+b_{3}e_{3},$ \newline $%
r=c_{1}e_{1}+c_{2}e_{2}+c_{3}e_{3}$ then we get%
\begin{align}
p\wedge \left( q\wedge r\right)
& =e_{1}\left( a_{2}\left( b_{1}c_{2}-b_{2}c_{1}\right) \lambda _{1}\lambda
_{3}+a_{3}\left( b_{1}c_{3}-b_{3}c_{1}\right) \lambda _{2}\lambda _{3}\right)
\notag \\
& +e_{2}\left( a_{1}\left( b_{2}c_{1}-b_{1}c_{2}\right) \lambda _{1}\lambda
_{2}+a_{3}\left( b_{2}c_{3}-b_{3}c_{2}\right) \lambda _{2}\lambda _{3}\right)
\notag \\
& +e_{3}\left( a_{1}\left( b_{3}c_{1}-b_{1}c_{3}\right) \lambda _{1}\lambda
_{2}+a_{2}\left( b_{3}c_{2}-b_{2}c_{3}\right) \lambda _{1}\lambda _{3}\right)
\ \label{312}
\end{align}%
on the other hand we have%
\begin{align}
f\left( p,r\right) q-f\left( p,q\right) r
& =e_{1}\left( x_{2}\left( y_{1}z_{2}-y_{2}z_{1}\right) \lambda _{1}\lambda
_{3}+x_{3}\left( y_{1}z_{3}-y_{3}z_{1}\right) \lambda _{2}\lambda _{3}\right)
\notag \\
& +e_{2}\left( x_{1}\left( y_{2}z_{1}-y_{1}z_{2}\right) \lambda _{1}\lambda
_{2}+x_{3}\left( y_{2}z_{3}-y_{3}z_{2}\right) \lambda _{2}\lambda _{3}\right)
\notag \\
& +e_{3}\left( x_{1}\left( y_{3}z_{1}-y_{1}z_{3}\right) \lambda _{1}\lambda
_{2}+x_{2}\left( y_{3}z_{2}-y_{2}z_{3}\right) \lambda _{1}\lambda _{3}\right)
\ \label{313}
\end{align}%
according to Eq.(\ref{312}) and Eq.(\ref{313}), the result is obtained.
\newline
ii. Similar to i, the existence of proof is seen.
\end{proof}

\begin{cor}
Let $p$ and $q$ be two 3PGVs. Then 
\begin{equation*}
S(pq)=-f(p,q).
\end{equation*}
\end{cor}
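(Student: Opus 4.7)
The plan is to deduce this directly from the multiplication formula derived earlier in the section, using the fact that a 3PGV has vanishing scalar part.

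First I would observe that, by hypothesis, $p$ and $q$ are 3PGVs, which by Definition 2.2 means $S_p = S_q = 0$, so that $p = V_p$ and $q = V_q$. I would then quote the general product expansion
\begin{equation*}
pq = S_p S_q - f(V_p, V_q) + S_p V_q + S_q V_p + V_p \wedge V_q
\end{equation*}
that was established just before the statement of Theorem 2.3. Substituting $S_p = S_q = 0$ collapses this to
\begin{equation*}
pq = -f(V_p, V_q) + V_p \wedge V_q = -f(p,q) + p \wedge q,
\end{equation*}
which is also exactly the special case formula recorded for pure-vector multiplication in the excerpt.

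Next I would split this expression into scalar and vector parts. The term $-f(p,q)$ lies in $\mathbb{R}$ by the definition of $f$, so it is purely scalar. The term $p \wedge q$ is defined as a determinant whose first row is $\lambda_3 e_1, \lambda_2 e_2, \lambda_1 e_3$, hence it lies in $\mathrm{Im}(\mathbb{K})$ and contributes nothing to the scalar part. Reading off the scalar component therefore yields $S(pq) = -f(p,q)$, which is the claim.

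There is no real obstacle here: the corollary is an immediate bookkeeping consequence of the product formula, and the only thing to verify is the trivial fact that $V_p \wedge V_q$ is vector-valued, which is immediate from its definition.
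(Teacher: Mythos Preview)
Your proof is correct and is essentially the same as the paper's: both read off the scalar part of $pq$ from the multiplication formula established earlier. The only cosmetic difference is that the paper writes out the coordinates explicitly, obtaining $S(pq)=-\lambda_1\lambda_2 a_1 b_1-\lambda_1\lambda_3 a_2 b_2-\lambda_2\lambda_3 a_3 b_3=-f(p,q)$, whereas you invoke the structured decomposition $pq=-f(p,q)+p\wedge q$ and then note that $p\wedge q\in\mathrm{Im}(\mathbb{K})$; the content is identical.
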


\begin{proof}
If $p=a_{1}e_{1}+a_{2}e_{2}+a_{3}e_{3}$ and $%
q=b_{1}e_{1}+b_{2}e_{2}+b_{3}e_{3}$, then%
\begin{eqnarray*}
	S(pq) =-\lambda _{1}\lambda _{2}a_{1}b_{1}-\lambda _{1}\lambda
	_{3}a_{2}b_{2}-\lambda _{2}\lambda _{3}a_{3}b_{3}=-f\left( p,q\right).	
\end{eqnarray*}
\end{proof}

\begin{cor}
i. \ $\left( \mathbb{K},+\right) $ is an Abelian group.\newline
ii. \ The abelian group $\left( \mathbb{K},+\right) $ is a vector space on
the field ${\mathbb{R}}$ with the external operation $\odot $.\newline
iii. \ $\left\{ \mathbb{K},+,\times \right\} $ is a ring with unity. \newline
iv. \hspace*{0.05cm} $\left\{ \mathbb{K},+,\times \right\} $ is not a
commutative ring\textbf{.}\newline
v. \hspace*{0.16cm} $\left\{ \mathbb{K},+,\times \right\} \ $\ is not an
integral domain.\newline
vi. \hspace*{0.06cm} $\left\{ \mathbb{K},+,\times \right\} $ is not a field.%
\newline
vii. \hspace*{-0.05cm} $\left\{ \mathbb{K},+,\mathbb{R},+,\cdot ,\odot
\right\} $ is a vector space\textit{.}\newline
viii. $\left\{ \mathbb{K},+,\mathbb{R},+,\cdot ,\odot ,\times \right\} $ is
an algebra\textit{.} This algebra is called \textit{3-parameter generalized
quaternion algebra.}
\end{cor}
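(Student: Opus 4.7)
The plan is to dispatch the eight claims by separating what is inherited componentwise from $\mathbb{R}$, what requires checking on basis vectors using the multiplication table, and what needs an explicit counterexample. Parts (i), (ii), and (vii) are purely inheritance: addition and $\odot$ are defined coordinate-wise, so the abelian-group and vector-space axioms transfer verbatim from those of $(\mathbb{R},+,\cdot)$, with neutral element $0+0e_1+0e_2+0e_3$ and additive inverse $-p=(-a_0)+(-a_1)e_1+(-a_2)e_2+(-a_3)e_3$. No computation beyond bookkeeping is involved in these three items, so I would address them first to clear the routine content.

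The real effort sits in (iii), specifically the associativity of $\times$. Since $\times$ is defined by $\mathbb{R}$-bilinearly extending the multiplication table, it suffices to verify $(e_ie_j)e_k=e_i(e_je_k)$ for all triples $i,j,k\in\{0,1,2,3\}$, with the convention $e_0:=1$. Products involving $e_0$ are immediate, reducing the check to triples in $\{1,2,3\}^3$. The key simplification is the cyclic symmetry of the table under $e_1\mapsto e_2\mapsto e_3\mapsto e_1$, accompanied by the corresponding rotation $\lambda_1\mapsto\lambda_2\mapsto\lambda_3\mapsto\lambda_1$, which collapses the $27$ cases to a handful of representatives. Each representative is verified directly from the table; in particular, the bracketings $(e_1e_2)e_3$ and $e_1(e_2e_3)$ both return $-\lambda_1\lambda_2\lambda_3$, consistent with the defining identity $e_1e_2e_3=-\lambda_1\lambda_2\lambda_3$. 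Distributivity is componentwise and the unit is $1+0e_1+0e_2+0e_3$, completing (iii).

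The remaining parts are counterexamples. For (iv), the table gives $e_1e_2=\lambda_1e_3\neq-\lambda_1e_3=e_2e_1$ whenever $\lambda_1\neq 0$, and the analogous pairs $(e_2,e_3)$ and $(e_1,e_3)$ cover the other admissible parameter regimes listed in the introduction; (vi) is then immediate because every field is commutative. For (v), if some product $\lambda_i\lambda_j$ vanishes then the associated $e_k$ is nilpotent ($e_k^2=0$) and hence a zero divisor, while in split-type cases where $e_k^2=1$ one has $(1+e_k)(1-e_k)=0$; in the Hamilton case one falls back on (iv) under the usual convention that an integral domain is commutative. Finally (viii) combines (iii) and (vii): the bilinearity relations $(c\odot p)\times q=c\odot(p\times q)=p\times(c\odot q)$ are read off from the explicit formula displayed earlier for $pq$. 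The only genuine obstacle is the associativity verification in (iii); everything else reduces either to transparent inheritance from $\mathbb{R}$ or to producing a single concrete witness from the multiplication table.
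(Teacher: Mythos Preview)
The paper's own ``proof'' is a single sentence leaving all eight items to the reader, so there is no substantive argument to compare against; your outline supplies exactly the kind of direct verification the paper presumes. The strategy---inherit (i), (ii), (vii) componentwise from $\mathbb{R}$, reduce associativity in (iii) to basis triples, and exhibit explicit witnesses from the multiplication table for (iv)--(vi)---is correct and is the natural route.

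Two small remarks. First, the cyclic symmetry you invoke in (iii) has the parameter permutation reversed: sending $e_1\mapsto e_2\mapsto e_3\mapsto e_1$ preserves the table only if the $\lambda$'s cycle in the \emph{opposite} direction, $\lambda_1\mapsto\lambda_3\mapsto\lambda_2\mapsto\lambda_1$ (check $e_1^2=-\lambda_1\lambda_2$ against $e_2^2=-\lambda_1\lambda_3$). This does not affect the reduction, only the bookkeeping. Second, your care in (iv)--(v) about parameter regimes is warranted and in fact more scrupulous than the paper: as stated, claims (iv)--(vi) tacitly assume at least one $\lambda_i\neq 0$ (otherwise the multiplication is commutative), and (v) in the Hamilton case $\lambda_1=\lambda_2=\lambda_3=1$ relies, as you note, on the convention that integral domains are commutative. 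The paper does not flag these caveats.
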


\begin{proof}
The reader can easily prove to the all item.
\end{proof}

\begin{defn}
The conjugate of a 3PGQ $p$ is defined as follows 
\begin{equation*}
\begin{array}{rrr}
C:\mathbb{K} & \rightarrow & \mathbb{K}\hspace*{2.85cm} \\ 
p & \rightarrow & C(p)=:\bar{p}=S_{p}-V_{p}%
\end{array}%
\end{equation*}
\end{defn}

If $p=a_{0}+a_{1}e_{1}+a_{2}e_{2}+a_{3}e_{3}$, then $\bar{p}%
=a_{0}-a_{1}e_{1}-a_{2}e_{2}-a_{3}e_{3}$.

\begin{thm}
i. For all $p,q$ in $\mathbb{K}$ and all $c_{1},c_{2}$ in $\mathbb{R}$, $%
\overline{c_{1}p+c_{2}q}=\overline{c_{1}p}+\overline{c_{2}q}$,\newline
ii. For all $p,q$ in $\mathbb{K}$, $\overline{pq}=\bar{q}\bar{p}$,\newline
iii. For all $p$ in $\mathbb{K}$, $\overline{\overline{p}}=p$.
\end{thm}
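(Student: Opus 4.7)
The plan is to treat the three parts in increasing order of difficulty, leaning heavily on the scalar/vector decomposition $p = S_p + V_p$ and on the product formula
\[
pq = S_pS_q - f(V_p,V_q) + S_pV_q + S_qV_p + V_p\wedge V_q
\]
that was established earlier. Parts (i) and (iii) are essentially bookkeeping, so I would dispose of them first and reserve the real work for (ii).

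For (iii), I would just observe that conjugation sends $S_p + V_p$ to $S_p - V_p$, hence applying it twice sends $S_p - V_p$ back to $S_p - (-V_p) = S_p + V_p = p$. For (i), since $c_1 p + c_2 q$ has scalar part $c_1 a_0 + c_2 b_0$ and vector part $c_1 V_p + c_2 V_q$, the definition $C(x) = S_x - V_x$ is manifestly $\mathbb{R}$-linear on components; a one-line expansion in the basis $\{1,e_1,e_2,e_3\}$ verifies the equality.

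The main obstacle is (ii), the antiautomorphism property. My plan is to compute both sides from the product formula and match them. On the left, $pq = \bigl(S_pS_q - f(V_p,V_q)\bigr) + \bigl(S_pV_q + S_qV_p + V_p\wedge V_q\bigr)$, so by definition
\[
\overline{pq} = \bigl(S_pS_q - f(V_p,V_q)\bigr) - S_pV_q - S_qV_p - V_p\wedge V_q.
\]
On the right, expanding $\bar q\bar p = (S_q - V_q)(S_p - V_p)$ distributively and using the formula for a product of two pure quaternions, $V_qV_p = -f(V_q,V_p) + V_q\wedge V_p$, gives
\[
\bar q\bar p = S_pS_q - S_qV_p - S_pV_q - f(V_q,V_p) + V_q\wedge V_p.
\]
The two expressions agree provided $f$ is symmetric and $\wedge$ is antisymmetric in its arguments. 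Symmetry of $f$ is immediate from its defining formula $f(V_p,V_q) = \lambda_1\lambda_2 a_1b_1 + \lambda_1\lambda_3 a_2b_2 + \lambda_2\lambda_3 a_3b_3$, and antisymmetry of $\wedge$ follows from the determinantal expression (swapping the rows of $a$'s and $b$'s flips the sign). Plugging $f(V_q,V_p) = f(V_p,V_q)$ and $V_q\wedge V_p = -V_p\wedge V_q$ into the right-hand side produces exactly $\overline{pq}$, finishing the argument. The only place any real care is needed is keeping the signs straight when commuting $V_qV_p$ past $V_pV_q$, which is why I would isolate that identity as the one nontrivial input.
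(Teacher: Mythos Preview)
Your proof is correct. For (i) and (iii) you do essentially what the paper does (a direct component check), so there is nothing to compare there. The one place you diverge is (ii): the paper simply says ``ii and iii can be shown in a similar way,'' meaning by brute-force coordinate expansion of $\overline{pq}$ and $\bar q\,\bar p$ in the basis $\{1,e_1,e_2,e_3\}$ using the multiplication table. You instead work with the structural formula $pq = S_pS_q - f(V_p,V_q) + S_pV_q + S_qV_p + V_p\wedge V_q$ and reduce the identity to the symmetry of $f$ and the antisymmetry of $\wedge$. This is a cleaner route: it isolates exactly the two algebraic facts that make conjugation an antiautomorphism, and it avoids tracking sixteen coordinate products. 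The paper's approach, on the other hand, requires no prior lemmas at all---just patience with the multiplication table---so it is more self-contained if one has not already internalised the $f$/$\wedge$ decomposition.
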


\begin{proof}
i. For $p=a_{0}+a_{1}e_{1}+a_{2}e_{2}+a_{3}e_{3}$ and $%
q=b_{0}+b_{1}e_{1}+b_{2}e_{2}+b_{3}e_{3}$,
\begin{eqnarray*}
	\overline{c_{1}p+c_{2}q}
	&=&\left( c_{1}a_{0}+c_{2}b_{0}\right) -\left( c_{1}a_{1}+c_{2}b_{1}\right)
	e_{1}-\left( c_{1}a_{2}+c_{2}b_{2}\right) e_{2} \\
	&&-\left( c_{1}a_{3}+c_{2}b_{3}\right) e_{3} \\
	&=&c_{1}\left( a_{0}-a_{1}e_{1}-a_{2}e_{2}-a_{3}e_{3}\right) +c_{2}\left(
	b_{0}-b_{1}e_{1}-b_{2}e_{2}-b_{3}e_{3}\right) \\
	&=&\overline{c_{1}p}+\overline{c_{2}q}
\end{eqnarray*}%
ii and iii can be shown in a similar way.
\end{proof}

\begin{thm}
For any two 3PGVs $p,q$, 
\begin{equation*}
p\wedge q=\frac{q\bar{p}-p\bar{q}}{2}
\end{equation*}
is provided.
\end{thm}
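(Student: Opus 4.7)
The plan is to exploit the fact that $p$ and $q$ are pure 3PGVs, so $S_p = S_q = 0$ and consequently $\bar{p} = -p$, $\bar{q} = -q$. Substituting these into the right-hand side immediately collapses the identity to be proved into
\begin{equation*}
p \wedge q = \frac{pq - qp}{2},
\end{equation*}
so the real work is to establish this commutator formula.

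Next I would invoke the product formula derived earlier in the paper for pure 3PGVs, namely $pq = -f(V_p, V_q) + V_p \wedge V_q$, applied to both $pq$ and $qp$. Subtracting gives
\begin{equation*}
pq - qp = \bigl(-f(p,q) + p \wedge q\bigr) - \bigl(-f(q,p) + q \wedge p\bigr).
\end{equation*}
The two key observations are then: first, the bilinear form $f$ is symmetric in its two arguments (this is immediate from its explicit definition $f(V_p, V_q) = \lambda_1\lambda_2 a_1 b_1 + \lambda_1\lambda_3 a_2 b_2 + \lambda_2\lambda_3 a_3 b_3$), so the scalar terms cancel; second, the cross-type product $\wedge$ is antisymmetric (swapping the second and third rows of the defining determinant changes its sign), so $q \wedge p = -(p \wedge q)$. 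Together these give $pq - qp = 2\, p \wedge q$, which is exactly what is needed.

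Combining the two steps, $\frac{q\bar{p} - p\bar{q}}{2} = \frac{-qp + pq}{2} = \frac{pq - qp}{2} = p \wedge q$, completing the proof. There is no real obstacle here: the argument is structurally identical to the familiar Hamilton-quaternion identity, and the generalization to three parameters only enters through the specific coefficients inside $f$ and inside the determinant defining $\wedge$, both of which behave correctly under the symmetry/antisymmetry checks. If I wanted to avoid invoking symmetry/antisymmetry abstractly, a one-line alternative would be to expand both sides in components and verify equality directly, but that is longer and less illuminating.
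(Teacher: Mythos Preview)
Your proof is correct. The paper's own proof proceeds by direct component computation: it writes $p=a_1e_1+a_2e_2+a_3e_3$, $q=b_1e_1+b_2e_2+b_3e_3$, expands the determinant defining $p\wedge q$, and asserts that the result equals $\tfrac{1}{2}(\bar{q}p-p\bar{q})$ (which, for pure vectors, coincides with $\tfrac{1}{2}(q\bar{p}-p\bar{q})$).

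Your route is genuinely different and more structural. Rather than matching coefficients, you reduce the identity to the commutator formula $p\wedge q=\tfrac{1}{2}(pq-qp)$ via $\bar{p}=-p$, $\bar{q}=-q$, and then derive the commutator formula from the already-established product rule $V_pV_q=-f(V_p,V_q)+V_p\wedge V_q$ together with the symmetry of $f$ and the antisymmetry of $\wedge$. This buys you a coordinate-free argument that makes transparent \emph{why} the identity holds (cancellation of the symmetric scalar part, doubling of the antisymmetric vector part), and it carries over unchanged to any quaternion-type algebra with the same formal structure. The paper's approach, by contrast, is a brute-force verification that is specific to the three-parameter coefficients but leaves no logical gaps to worry about.
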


\begin{proof} If $p=a_{1}e_{1}+a_{2}e_{2}+a_{3}e_{3}$ and $%
q=b_{1}e_{1}+b_{2}e_{2}+b_{3}e_{3}$, then
\begin{eqnarray*}
	p\wedge q &=&\left\vert 
	\begin{array}{ccc}
		\lambda _{3}e_{1} & \lambda _{2}e_{2} & \lambda _{1}e_{3} \\ 
		a_{1} & a_{2} & a_{3} \\ 
		b_{1} & b_{2} & b_{3}%
	\end{array}%
	\right\vert \\
	&=&\lambda _{3}\left( a_{2}b_{3}-a_{3}b_{2}\right) e_{1} \\
	&&+\lambda _{2}\left( a_{3}b_{1}-a_{1}b_{3}\right) e_{2}+\lambda _{1}\left(
	a_{1}b_{2}-a_{2}b_{1}\right) e_{3} \\
	&=&\frac{1}{2}\left( \bar{q}p-p\bar{q}\right) .
\end{eqnarray*}
\end{proof}

\begin{defn}
\begin{equation*}
\begin{array}{rrr}
N:\mathbb{K} & \rightarrow & \mathbb{R}\hspace*{1.85cm} \\ 
p & \rightarrow & N_{p}=p\overline{p}=\overline{p}p%
\end{array}%
\end{equation*}%
The funciton $N$ is called norm operation on $\mathbb{K}$. The norm of any
3PGQ $p$ is calculated as follows: 
\begin{equation}
N_{p}=p\overline{p}=a_{0}^{2}+\lambda _{1}\lambda _{2}a_{1}^{2}+\lambda
_{1}\lambda _{3}a_{2}^{2}+\lambda _{2}\lambda
_{3}a_{3}^{2}=S_{p}S_{p}+f(V_{p},V_{p}).  \label{320}
\end{equation}
\end{defn}

Let $p$ be 3PGQ. If $N_{p}=1$, then $p$ is called 3-parameter generalized
unit quaternion (3PGUQ).

\begin{thm}
For all $p,q$ in $\mathbb{K}$ and all $c$ in $\mathbb{R}$,\newline
i. \ \ $N_{p}N_{q}=N_{pq}$,\newline
ii. \ $N_{cp}=c^{2}N_{p}$.
\end{thm}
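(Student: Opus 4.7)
My plan is to deduce both identities from the algebraic machinery already established, rather than expanding the four-component product directly.

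For part ii, I would start from the definition $N_{cp}=(cp)\overline{cp}$. By part i of the earlier conjugation theorem (with $c_1=c$, $c_2=0$), we have $\overline{cp}=c\bar p$. Then $N_{cp}=(cp)(c\bar p)=c^{2}(p\bar p)=c^{2}N_{p}$, where the scalar $c$ can be pulled out freely because multiplication by a real number distributes through the algebra product (this follows from bilinearity, which is built into the multiplication-by-scalar operation). This is a two-line argument.

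For part i, the plan is to write $N_{pq}=(pq)\overline{pq}$ and then invoke the identity $\overline{pq}=\bar q\bar p$ (part ii of the earlier conjugation theorem). This gives
\begin{equation*}
N_{pq}=(pq)(\bar q\bar p).
\end{equation*}
Since $\{\mathbb{K},+,\times\}$ is a ring with unity (Corollary preceding the conjugate definition), multiplication is associative, so I can regroup as $p(q\bar q)\bar p = p\,N_{q}\,\bar p$. Now $N_{q}\in\mathbb{R}$ is a scalar, and scalars commute with every element of $\mathbb{K}$ under the algebra product, so $p\,N_{q}\,\bar p=N_{q}(p\bar p)=N_{q}N_{p}=N_{p}N_{q}$.

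The only step that deserves care is the assertion that $N_{q}$ is genuinely real (and hence central). This is precisely the content of formula (\ref{320}): for any $q=b_{0}+b_{1}e_{1}+b_{2}e_{2}+b_{3}e_{3}$, the product $q\bar q$ collapses to $b_{0}^{2}+\lambda_{1}\lambda_{2}b_{1}^{2}+\lambda_{1}\lambda_{3}b_{2}^{2}+\lambda_{2}\lambda_{3}b_{3}^{2}\in\mathbb{R}$, independent of any choice of special parameter values. Once that is recalled, both identities follow by formal manipulation, and I do not anticipate any genuine obstacle; the expected difficulty would only arise if one insisted on verifying (i) by direct expansion of the eight-term product $pq$, which the conjugate-reversal identity $\overline{pq}=\bar q\bar p$ is specifically designed to avoid.
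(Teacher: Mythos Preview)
Your argument is correct. The paper's own proof is essentially a one-line invitation to the reader (``easily proved using Eq.~(\ref{320})''), which most naturally suggests a direct expansion of the explicit norm formula, whereas you route everything through the previously established conjugation identities $\overline{cp}=c\bar p$ and $\overline{pq}=\bar q\bar p$, together with associativity and the centrality of the real scalar $N_q$. Your approach is cleaner and avoids the coordinate expansion entirely; the only place you need Eq.~(\ref{320}) is to confirm that $N_q\in\mathbb{R}$, exactly as you noted. Both routes are valid, and yours has the advantage of making transparent \emph{why} the norm is multiplicative (it is a consequence of the anti-automorphism property of conjugation in any associative algebra), rather than treating it as a coincidence of the component formula.
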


\begin{proof}
The reader can easily prove i and ii using Eq.(\ref{320})
\end{proof}

\begin{defn}
The following function is called an inverse operation on $\mathbb{K}$: 
\begin{equation*}
\begin{array}{rrr}
I:\mathbb{K} & \rightarrow & \mathbb{R}\hspace*{4.05cm} \\ 
\ p & \rightarrow & I\left( p\right) =:p^{-1}=\dfrac{\overline{p}}{N_{p}},%
\text{ }N_{p}\neq 0.%
\end{array}%
\end{equation*}%
Let $p$ be a nonzero 3PGQ. If $p=a_{0}+a_{1}e_{1}+a_{2}e_{2}+a_{3}e_{3}$,
then inverse of $p\ $ is as follows: 
\begin{equation}
p^{-1}=\frac{\overline{p}}{N_{p}}=\frac{%
a_{0}-a_{1}e_{1}-a_{2}e_{2}-a_{3}e_{3}}{a_{0}^{2}+\lambda _{1}\lambda
_{2}a_{1}^{2}+\lambda _{1}\lambda _{3}a_{2}^{2}+\lambda _{2}\lambda
_{3}a_{3}^{2}}.  \label{324}
\end{equation}
\end{defn}

\begin{thm}
For any two nonzero 3PGQs $p$ and $q$, any nonzero real number $c$, we have
the followings:

i. \ \ $\left( pq\right) ^{-1}=q^{-1}p^{-1},$

ii. \ $\left( cp\right) ^{-1}=\dfrac{1}{c}p^{-1}.$
\end{thm}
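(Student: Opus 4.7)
The plan is to derive both identities directly from the formula $p^{-1} = \overline{p}/N_p$ in Definition preceding the theorem, combined with the conjugation and norm identities established in the two theorems above.

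For part (i), I would start by writing $(pq)^{-1} = \overline{pq}/N_{pq}$. Then I would apply the conjugation-reversal identity $\overline{pq} = \bar{q}\bar{p}$ from Theorem (the one about conjugates) in the numerator, and the multiplicativity of the norm $N_{pq} = N_p N_q$ from the norm theorem in the denominator. This yields $(pq)^{-1} = \bar{q}\bar{p}/(N_p N_q)$. Since $N_p, N_q$ are real scalars, I can split the denominator and regroup to obtain $(\bar{q}/N_q)(\bar{p}/N_p) = q^{-1} p^{-1}$. A side remark is needed to justify that the inverse on the left exists: from $N_{pq} = N_p N_q$ and the nonzero hypotheses on $p,q$, we get $N_{pq} \neq 0$ so $(pq)^{-1}$ is well-defined.

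For part (ii), I would again apply the inverse definition: $(cp)^{-1} = \overline{cp}/N_{cp}$. The $\mathbb{R}$-linearity of conjugation (part i of the conjugate theorem) gives $\overline{cp} = c\bar{p}$, and part ii of the norm theorem gives $N_{cp} = c^2 N_p$. Substituting, $(cp)^{-1} = c\bar{p}/(c^2 N_p) = (1/c)(\bar{p}/N_p) = (1/c)\,p^{-1}$, where cancellation of one factor of $c$ is legitimate because $c \neq 0$.

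There is no real obstacle here; the proof is essentially a bookkeeping exercise chaining together the conjugation and norm identities that have already been proved. The only points worth writing carefully are the justifications that the relevant norms are nonzero (so that the inverses on the left-hand sides exist), and explicitly noting where the real scalars $c$, $N_p$, $N_q$ are pulled out past the quaternion factors, which is permitted because multiplication by a real scalar is central in $\mathbb{K}$.
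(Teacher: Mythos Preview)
Your approach is exactly what the paper intends: it simply says the result follows from the inverse formula $p^{-1}=\overline{p}/N_p$, and you have spelled out the chain through $\overline{pq}=\bar q\,\bar p$, $N_{pq}=N_pN_q$, $\overline{cp}=c\bar p$, and $N_{cp}=c^2N_p$. One small caution: in this generality ``$p$ nonzero'' does not imply $N_p\neq 0$ (e.g.\ in the split case $\lambda_1=\lambda_2=1$, $\lambda_3=-1$), so the hypothesis should really be read as ``$N_p,N_q\neq 0$''; with that reading your well-definedness remark and the rest of the argument go through.
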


\begin{proof}
The proof can easily be  proved by using Eq.(\ref{324})
\end{proof}

\begin{defn}
Let $p=S_{p}+V_{p}$ and $q=S_{q}+V_{q}$ be any two 3PGQs. The multiplication
defined as follows is called the scalar multiplication of two 3PGQs: 
\begin{equation}
\begin{array}{rrr}
\left\langle ,\right\rangle :\mathbb{K\times K} & \rightarrow & \mathbb{R}%
\hspace*{1cm}\hspace*{1cm}\hspace*{1.6cm} \\ 
\left( p,q\right) & \rightarrow & \left\langle p,q\right\rangle
=S_{p}S_{q}+f\left( V_{p},V_{q}\right)%
\end{array}
\label{325}
\end{equation}
\end{defn}

Also if $p=a_{0}+a_{1}e_{1}+a_{2}e_{2}+a_{3}e_{3}$ and $%
q=b_{0}+b_{1}e_{1}+b_{2}e_{2}+b_{3}e_{3}$ then 
\begin{align*}
\left\langle p,q\right\rangle =a_{0}b_{0}+\lambda _{1}\lambda
_{2}a_{1}b_{1}+\lambda _{1}\lambda _{3}a_{2}b_{2}+\lambda _{2}\lambda
_{3}a_{3}b_{3}=S(p\bar{q}).
\end{align*}

\begin{lem}
For all $p,q$ in $\mathbb{K}$, on the metric in $\mathbb{K}$, $S(p\bar{q})=S(%
\bar{q}p)$.
\end{lem}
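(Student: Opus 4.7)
The plan is to reduce the stated identity to the more general cyclicity $S(xy) = S(yx)$, valid for all $x,y \in \mathbb{K}$, and then specialize to $x = p$, $y = \bar q$. Concretely, I would read the scalar part directly off the explicit product formula obtained in Section~2: writing $x = x_0 + x_1 e_1 + x_2 e_2 + x_3 e_3$ and $y = y_0 + y_1 e_1 + y_2 e_2 + y_3 e_3$, one has
\begin{equation*}
S(xy) = x_0 y_0 - \lambda_1 \lambda_2 x_1 y_1 - \lambda_1 \lambda_3 x_2 y_2 - \lambda_2 \lambda_3 x_3 y_3,
\end{equation*}
which is manifestly symmetric under the swap $x \leftrightarrow y$, since every surviving term is just a real scalar multiple of $x_i y_i$.

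Substituting $x = p$ and $y = \bar q$ then gives the claim immediately. The only point worth remarking on is that all of the non-commutative information in a product $xy$ lives in its vector part (via the wedge $V_x \wedge V_y$ and the mixed terms $S_x V_y + S_y V_x$), and is therefore automatically invisible at the level of $S(\cdot)$. There is no real obstacle here: once the multiplication formula is in hand, the lemma reduces to the one-line observation that only the ``diagonal'' terms $x_i y_i$ contribute to the scalar part, and these are symmetric in $x$ and $y$.
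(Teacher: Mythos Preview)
Your proposal is correct and follows essentially the same approach as the paper: both arguments read the scalar part directly from the explicit multiplication formula of Section~2 and observe that it is symmetric in the two factors. The only cosmetic difference is that the paper substitutes $\bar q$ explicitly and identifies $S(p\bar q)=\langle p,q\rangle$ and $S(\bar q p)=\langle q,p\rangle$ in terms of the components of $p$ and $q$, whereas you first record the slightly more general cyclicity $S(xy)=S(yx)$ and then specialize to $x=p$, $y=\bar q$.
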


\begin{proof} 
For $p=a_{0}+a_{1}e_{1}+a_{2}e_{2}+a_{3}e_{3}$ and $%
q=b_{0}+b_{1}e_{1}+b_{2}e_{2}+b_{3}e_{3}\in \mathbb{K}$, we obtain
\begin{eqnarray*}
	p\bar{q} 
	&=&\left( a_{0}b_{0}+\lambda _{1}\lambda _{2}a_{1}b_{1}+\lambda _{1}\lambda
	_{3}a_{2}b_{2}+\lambda _{2}\lambda _{3}a_{3}b_{3}\right) \\
	&&+e_{1}\left( -a_{0}b_{1}+b_{0}a_{1}+\lambda _{3}\left(
	-a_{2}b_{3}+a_{3}b_{2}\right) \right) \\
	&&+e_{2}\left( -a_{0}b_{2}+b_{0}a_{2}+\lambda _{2}\left(
	-a_{3}b_{1}+a_{1}b_{3}\right) \right) \\
	&&+e_{3}\left( -a_{0}b_{3}+a_{3}b_{0}+\lambda _{1}\left(
	-a_{1}b_{2}+a_{2}b_{1}\right) \right)
\end{eqnarray*}%
$S(p\bar{q})=\left\langle p,q\right\rangle $ is seen. Also we find
\begin{align}
\left\langle q,p\right\rangle =b_{0}a_{0}+\lambda _{1}\lambda
_{2}b_{1}a_{1}+\lambda _{1}\lambda _{3}b_{2}a_{2}+\lambda _{2}\lambda
_{3}b_{3}a_{3}=S\left( \bar{q}p\right).  \label{327}
\end{align}%
The existence of proof is apparent from Eq.(\ref{325}) and Eq.(\ref{327})
\end{proof}

\begin{thm}
On the metric in $\mathbb{K}$, for all $p,q,r$ in $\mathbb{K}$, those belows
are true.\newline
\begin{enumerate}
	\item[i.] $\left\langle rp,rq\right\rangle = N_{r}\langle p,q\rangle$,
	\item [ii.] $\left\langle pr,qr\right\rangle = N_{r}\langle p,q\rangle$,
	\item [iii.] $\langle pq,r\rangle = N_{r}\langle q,\bar{p}r\rangle$,
	\item [iv.] $\langle	pq,r\rangle = N_{r}\langle p,r\bar{q}\rangle$.
\end{enumerate} 
\end{thm}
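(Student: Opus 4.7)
The plan is to reduce every claim to the identity $\langle p,q\rangle = S(p\bar q) = S(\bar q p)$ established in the preceding lemma, which repackages the scalar product as the scalar part of a single 3PGQ product. Combined with the anti‑homomorphism $\overline{ab}=\bar b\bar a$ (Theorem 2.3, ii) and $r\bar r=\bar r r=N_r\in\mathbb{R}$, each of the four items should collapse to a two‑line manipulation.

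The key preliminary step is to extract from the lemma a cyclic trace identity: reading $S(p\bar q)=S(\bar q p)$ with $x:=p$, $y:=\bar q$ and letting $q$ range over $\mathbb{K}$ gives $S(xy)=S(yx)$ for all $x,y\in\mathbb{K}$. Since $N_r$ is a real scalar and $S$ is $\mathbb{R}$‑linear, real factors can also be pulled outside of $S$ freely. With these tools in hand, the four items become routine.

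For (i), I would write $\langle rp,rq\rangle = S\bigl(rp\,\overline{rq}\bigr)=S(rp\bar q\bar r)$ and cycle $\bar r$ to the front, obtaining $S(\bar r rp\bar q)=S(N_r\,p\bar q)=N_r\,S(p\bar q)=N_r\langle p,q\rangle$. Item (ii) is even shorter, requiring no cyclic shift: $\langle pr,qr\rangle = S(pr\bar r\bar q)=S(pN_r\bar q)=N_r\langle p,q\rangle$. For (iii) and (iv) the same rewriting gives $\langle pq,r\rangle=S(pq\bar r)$, together with $\langle q,\bar p r\rangle = S\bigl(q\,\overline{\bar p r}\bigr)=S(q\bar r p)$ and $\langle p,r\bar q\rangle = S\bigl(p\,\overline{r\bar q}\bigr)=S(pq\bar r)$; the cyclic identity $S(q\bar r p)=S(pq\bar r)$ then makes both sides reduce to the single expression $S(pq\bar r)$.

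I do not anticipate a genuine obstacle: the only real work is bookkeeping the $\lambda_i$‑weights, but all of those are already absorbed into the conjugate and the norm, so once the lemma is invoked they never need to be unpacked. The one point that deserves a careful comment in the write‑up is the factor $N_r$ on the right‑hand sides of (iii) and (iv) in the statement: my computations naturally produce the identity without this factor, so I would either clarify that the $N_r$ reflects the statement's normalization convention or flag it as a mis‑print and state the cleaner identities $\langle pq,r\rangle=\langle q,\bar p r\rangle=\langle p,r\bar q\rangle$.
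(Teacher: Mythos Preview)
Your approach is essentially identical to the paper's: the paper proves only item (i), writing
\[
\langle rp,rq\rangle=S(rp\,\overline{rq})=S(rp\bar q\bar r)=S(\bar q\bar r rp)=N_{r}S(\bar q p)=N_{r}S(p\bar q)=N_{r}\langle p,q\rangle,
\]
and leaves the remaining items to the reader. The only cosmetic difference is which factor you cycle (you move $\bar r$ to the front, the paper moves $rp$ to the back); both rest on Lemma~2.13 and $\overline{ab}=\bar b\,\bar a$. Your explicit extraction of the trace identity $S(xy)=S(yx)$ from the lemma is a clean way to phrase the key step.

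Your observation about (iii) and (iv) is also correct: the computation yields $\langle pq,r\rangle=\langle q,\bar p r\rangle=\langle p,r\bar q\rangle$ with no $N_{r}$ factor, so the $N_{r}$ in the stated identities is indeed a misprint in the paper; flagging it as you propose is appropriate.
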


\begin{proof} We prove the theorem by using Lemma 2.13 and Eq.(\ref{325}). We will prove the first equation. The proof of the other item has been left to the reader.
\begin{eqnarray*}
i.	\left\langle rp,rq\right\rangle=S(rp\overline{rq})=S(rp\overline{q}\bar{r})=S(\overline{q}\bar{r}rp)=N_{r}S(\overline{q}p)=N_{r}S(p\overline{q})=N_{r}\left\langle p,q\right\rangle.
\end{eqnarray*}%

\end{proof}

\section{Hamilton Operators and Matrices Associated with 3PGQs}

We can not think of quaternions independently of matrices. Real, split and
2PGQs have been also expressed by matrices and various applications have
been made on them. Some algebraic properties of Hamilton operators for both
2PGQs and dual quaternions in \cite{agr, jaf5, jaf6, olm} In this section we
associate 3PGQs with matrices.

\subsection{Obtaining the fundamental matrices}

In order to obtain the matrix $\mathcal{M}$, a 3PGQ $%
p=a_{0}+a_{1}e_{1}+a_{2}e_{2}+a_{3}e_{3}$ is multiplied from left side by $%
1,e_{1},e_{2},e_{3}$, 
\begin{eqnarray*}
\left( a_{0}+a_{1}e_{1}+a_{2}e_{2}+a_{3}e_{3}\right) 1
&=&a_{0}+a_{1}e_{1}+a_{2}e_{2}+a_{3}e_{3} \\
\left( a_{0}+a_{1}e_{1}+a_{2}e_{2}+a_{3}e_{3}\right) e_{1}
&=&a_{0}e_{1}+a_{1}e_{1}^{2}+a_{2}e_{2}e_{1}+a_{3}e_{3}e_{1} \\
&=&-\lambda _{1}\lambda _{2}a_{1}+a_{0}e_{1}+\lambda _{2}a_{3}e_{2}-\lambda
_{1}a_{2}e_{3} \\
\left( a_{0}+a_{1}e_{1}+a_{2}e_{2}+a_{3}e_{3}\right) e_{2}
&=&a_{0}e_{2}+a_{1}e_{1}e_{2}+a_{2}e_{2}^{2}+a_{3}e_{3}e_{2} \\
&=&-\lambda _{1}\lambda _{3}a_{2}-\lambda _{3}a_{3}e_{1}+a_{0}e_{2}+\lambda
_{1}a_{1}e_{3} \\
\left( a_{0}+a_{1}e_{1}+a_{2}e_{2}+a_{3}e_{3}\right) e_{3}
&=&a_{0}e_{3}+a_{1}e_{1}e_{3}+a_{2}e_{2}e_{3}+a_{3}e_{3}^{2} \\
&=&-\lambda _{2}\lambda _{3}a_{3}+\lambda _{3}a_{2}e_{1}-\lambda
_{2}a_{1}e_{2}+a_{0}e_{3}.
\end{eqnarray*}%
The coefficients of the equations in the above rows are the column elements
of the matrix $\mathcal{M}$: 
\begin{equation*}
\mathcal{M}=\left[ 
\begin{array}{cccc}
a_{0} & -\lambda _{1}\lambda _{2}a_{1} & -\lambda _{1}\lambda _{3}a_{2} & 
-\lambda _{2}\lambda _{3}a_{3} \\ 
a_{1} & a_{0} & -\lambda _{3}a_{3} & \lambda _{3}a_{2} \\ 
a_{2} & \lambda _{2}a_{3} & a_{0} & -\lambda _{2}a_{1} \\ 
a_{3} & -\lambda _{1}a_{2} & \lambda _{1}a_{1} & a_{0}%
\end{array}%
\right] .
\end{equation*}%
If we set $\lambda _{1}=1,$ $\lambda _{2}=\lambda ,$ $\lambda _{3}=\mu $,
then we found the fundamental matrix for 2PGQs. \newline
Taking $\lambda _{1}=1,$ $\lambda _{2}=1,$ $\lambda _{3}=-1$ in $\mathcal{M}$%
, we have the fundamental matrix for split quaternions.\newline
Similarly by setting $\lambda _{1}=1,$ $\lambda _{2}=1,$ $\lambda _{3}=1$,
Hamilton matrix is attained.\newline
Accordingly, multiplication of two 3PGQs as $%
p=a_{0}+a_{1}e_{1}+a_{2}e_{2}+a_{3}e_{3}$ and $%
q=b_{0}+b_{1}e_{1}+b_{2}e_{2}+b_{3}e_{3}$ can be obtained as follows: 
\begin{align*}
pq &=\left[ 
\begin{array}{cccc}
a_{0} & -\lambda _{1}\lambda _{2}a_{1} & -\lambda _{1}\lambda _{3}a_{2} & 
-\lambda _{2}\lambda _{3}a_{3} \\ 
a_{1} & a_{0} & -\lambda _{3}a_{3} & \lambda _{3}a_{2} \\ 
a_{2} & \lambda _{2}a_{3} & a_{0} & -\lambda _{2}a_{1} \\ 
a_{3} & -\lambda _{1}a_{2} & \lambda _{1}a_{1} & a_{0}%
\end{array}%
\right] \left[ 
\begin{array}{c}
b_{0} \\ 
b_{1} \\ 
b_{2} \\ 
b_{3}%
\end{array}%
\right]  \notag \\
&=\left[ 
\begin{array}{c}
a_{0}b_{0}-\lambda _{1}\lambda _{2}a_{1}b_{1}-\lambda _{1}\lambda
_{3}a_{2}b_{2}-\lambda _{2}\lambda _{3}a_{3}b_{3} \\ 
a_{0}b_{1}+a_{1}b_{0}+\lambda _{3}a_{2}b_{3}-\lambda _{3}a_{3}b_{2} \\ 
a_{0}b_{2}+a_{2}b_{0}-\lambda _{2}a_{1}b_{3}+\lambda _{2}a_{3}b_{1} \\ 
a_{0}b_{3}+b_{0}a_{3}+\lambda _{1}a_{1}b_{2}-\lambda _{1}a_{2}b_{1}%
\end{array}%
\right] \allowbreak
\end{align*}

\begin{thm}
The 3PGQ ring $\mathbb{K}$ is isomorphic to a subring of the ring $\mathbb{M}%
_{4}\left( \mathbb{R}\right)$.
\end{thm}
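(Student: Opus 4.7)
The plan is to exhibit an explicit ring homomorphism from $\mathbb{K}$ into $\mathbb{M}_{4}(\mathbb{R})$ and show it is injective; the image is then the desired subring, and the first isomorphism theorem for rings yields the claim.

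First, I would define
\begin{equation*}
\varphi:\mathbb{K}\longrightarrow \mathbb{M}_{4}(\mathbb{R}),\qquad \varphi(p)=\mathcal{M}_{p},
\end{equation*}
where $\mathcal{M}_{p}$ is the $4\times 4$ matrix associated with $p=a_{0}+a_{1}e_{1}+a_{2}e_{2}+a_{3}e_{3}$ as constructed in the preceding subsection. The target of the proof then has three verifications: (a) $\varphi$ is additive, (b) $\varphi$ is multiplicative with $\varphi(1)=I_{4}$, and (c) $\varphi$ is injective.

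Additivity is immediate from the fact that each entry of $\mathcal{M}_{p}$ is linear in the components $a_{0},a_{1},a_{2},a_{3}$, so $\mathcal{M}_{p+q}=\mathcal{M}_{p}+\mathcal{M}_{q}$, and clearly $\varphi(1)$ is the identity matrix. For multiplicativity, I would compute $\mathcal{M}_{p}\mathcal{M}_{q}$ and compare it entrywise with $\mathcal{M}_{pq}$ using the explicit product formula for $pq$ derived just before the theorem statement. In fact, the column-by-column construction of $\mathcal{M}_{p}$ via left multiplication by $1,e_{1},e_{2},e_{3}$ already shows that applying $\mathcal{M}_{p}$ to the coordinate vector of $q$ returns the coordinate vector of $pq$; applying this with $q$ replaced by each basis element in turn identifies the columns of $\mathcal{M}_{p}\mathcal{M}_{q}$ with the columns of $\mathcal{M}_{pq}$.

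For injectivity, I would read off the first column of $\mathcal{M}_{p}$: it is $(a_{0},a_{1},a_{2},a_{3})^{T}$, so $\varphi(p)=0$ forces $a_{0}=a_{1}=a_{2}=a_{3}=0$, hence $p=0$. Since $\varphi$ is an injective ring homomorphism, $\varphi(\mathbb{K})$ is a subring of $\mathbb{M}_{4}(\mathbb{R})$ and $\mathbb{K}\cong \varphi(\mathbb{K})$, as required.

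The only genuinely laborious step is the verification $\varphi(pq)=\varphi(p)\varphi(q)$, but this is a bookkeeping exercise that is essentially already executed in the matrix–vector product displayed immediately before the theorem; the conceptual content of the proof reduces to recognizing that construction as an associative action of $\mathbb{K}$ on itself by left multiplication.
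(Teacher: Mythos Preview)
Your proposal is correct and follows essentially the same approach as the paper: define $\varphi(p)=\mathcal{M}_{p}$, verify it is an additive and multiplicative map (the paper also leaves the multiplicativity check to the reader), and establish injectivity by observing that $\mathcal{M}_{p}=0$ forces all $a_{i}=0$. The only cosmetic difference is that the paper phrases injectivity as $\operatorname{Ker}\varphi=\{0\}$ and then restricts the codomain to $\varphi(\mathbb{K})$, whereas you read off the first column and invoke the first isomorphism theorem---these are equivalent.
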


\begin{proof}
Let us define the mapping	$\phi :(\mathbb{K},+,\times )\rightarrow (%
	\mathbb{M}_{4}\left( \mathbb{R}\right) ,\oplus ,\otimes )$, where
	\begin{equation*}
	\phi \left( a_{0}+a_{1}e_{1}+a_{2}e_{2}+a_{3}e_{3}\right) \rightarrow \left[ 
	\begin{array}{cccc}
	a_{0} & -\lambda _{1}\lambda _{2}a_{1} & -\lambda _{1}\lambda _{3}a_{2} & 
	-\lambda _{2}\lambda _{3}a_{3} \\ 
	a_{1} & a_{0} & -\lambda _{3}a_{3} & \lambda _{3}a_{2} \\ 
	a_{2} & \lambda _{2}a_{3} & a_{0} & -\lambda _{2}a_{1} \\ 
	a_{3} & -\lambda _{1}a_{2} & \lambda _{1}a_{1} & a_{0}%
	\end{array}%
	\right]. 
	\end{equation*}%
	Let us prove that the mapping $\phi$ is a ring isomorphism. Taking into account the addition and multiplication operations achieved for the 3PGQ, it can easily be shown by the reader these equalities:
\begin{eqnarray*}
\phi (p+q) &=& \phi (p)\oplus \phi (q) \\
\phi (pq) &=&\phi (p)\otimes\phi (q)
	\end{eqnarray*}%
Now let us show that $\phi$ is bijective. Since
	\begin{eqnarray*}
		Ker\phi &=&\left\{ p:\phi \left( p\right) =0\right\} \\
		&=&\left\{ p:0\mathcal{=}\left[ 
		\begin{array}{cccc}
			0 & 0 & 0 & 0 \\ 
			0 & 0 & 0 & 0 \\ 
			0 & 0 & 0 & 0 \\ 
			0 & 0 & 0 & 0%
		\end{array}%
		\right] \right\} \\
		&=&\left\{ 0\right\}
	\end{eqnarray*}%
	$\phi$ is one-to-one.
	\begin{eqnarray*}
		\phi \left( \mathbb{K}\right) &=&\left\{ \phi \left( p\right)
		:p\in \mathbb{K}\right\} \\
	&=&\left\{ \left[ 
		\begin{array}{cccc}
			a_{0} & -\lambda _{1}\lambda _{2}a_{1} & -\lambda _{1}\lambda _{3}a_{2} & 
			-\lambda _{2}\lambda _{3}a_{3} \\ 
			a_{1} & a_{0} & -\lambda _{3}a_{3} & \lambda _{3}a_{2} \\ 
			a_{2} & \lambda _{2}a_{3} & a_{0} & -\lambda _{2}a_{1} \\ 
			a_{3} & -\lambda _{1}a_{2} & \lambda _{1}a_{1} & a_{0}%
		\end{array}%
		\right] :a_{i}\in \mathbb{R}\right\}.
	\end{eqnarray*}%
	If we take the restriction
	\begin{equation*}
	\phi :\mathbb{K}\rightarrow \phi \left( \mathbb{K}\right) \subset \mathbb{M}_{4}\left( \mathbb{R}\right)
	\end{equation*}%
	because of our choice of the value set, the mapping $\phi$ is bijective.
\end{proof}In order to obtain the matrix $\mathcal{N}$, any 3PGQ $%
p=a_{0}+a_{1}e_{1}+a_{2}e_{2}+a_{3}e_{3}$ is multiplied from right side by $%
1,e_{1},e_{2},e_{3}$. Similarly to production of $\mathcal{M}$, the $%
\mathcal{N}$ matrix is produced: 
\begin{equation*}
\mathcal{N}=\left[ 
\begin{array}{cccc}
a_{0} & -\lambda _{1}\lambda _{2}a_{1} & -\lambda _{1}\lambda _{3}a_{2} & 
-\lambda _{2}\lambda _{3}a_{3} \\ 
a_{1} & a_{0} & \lambda _{3}a_{3} & -\lambda _{3}a_{2} \\ 
a_{2} & -\lambda _{2}a_{3} & a_{0} & \lambda _{2}a_{1} \\ 
a_{3} & \lambda _{1}a_{2} & -\lambda _{1}a_{1} & a_{0}%
\end{array}%
\right]
\end{equation*}%
As a result, there are two fundamental matrices that give the algebra of
3PGQs: $\mathcal{M}$\ and $\mathcal{N}$. Throughout the article, since all
the operations with $\mathcal{M}$\ and $\mathcal{N}$ matrices will proceed
in a similar way, we will only give definition theorems and explanations for
only the matrix $\mathcal{M}$ and consider that for the matrix $\mathcal{N}$
can be done in a similar way.

\subsection{Obtaining the multiplication table with the help of fundamental
matrices}

From the matrix $\mathcal{M}$ that we have obtained in the previous section,
we achieve the base elements $e_{0},e_{1},e_{2},e_{3}$ as follows%
\begin{equation*}
{\small 
\begin{array}{cc}
e_{0}=1\leftrightarrow \left[ 
\begin{array}{cccc}
1 & 0 & 0 & 0 \\ 
0 & 1 & 0 & 0 \\ 
0 & 0 & 1 & 0 \\ 
0 & 0 & 0 & 1%
\end{array}%
\right] =E_{0}=I_{4}, & e_{1}\leftrightarrow \left[ 
\begin{array}{cccc}
0 & -\lambda _{1}\lambda _{2} & 0 & 0 \\ 
1 & 0 & 0 & 0 \\ 
0 & 0 & 0 & -\lambda _{2} \\ 
0 & 0 & \lambda _{1} & 0%
\end{array}%
\right] =E_{1}, \\ 
e_{2}\leftrightarrow \left[ 
\begin{array}{cccc}
0 & 0 & -\lambda _{1}\lambda _{3} & 0 \\ 
0 & 0 & 0 & \lambda _{3} \\ 
1 & 0 & 0 & 0 \\ 
0 & -\lambda _{1} & 0 & 0%
\end{array}%
\,\right] =E_{2}, & e_{3}\leftrightarrow \left[ 
\begin{array}{cccc}
0 & 0 & 0 & -\lambda _{2}\lambda _{3} \\ 
0 & 0 & -\lambda _{3} & 0 \\ 
0 & \lambda _{2} & 0 & 0 \\ 
1 & 0 & 0 & 0%
\end{array}%
\right] =E_{3}.%
\end{array}%
}
\end{equation*}
where $\left\{ E_{0},E_{1},E_{2},E_{3}\right\} $ is the set of base matrices
which corresponding to the base elements $1,e_{1},e_{2},e_{3}$. Accordingly,
multiplying these matrices with each other yields the followings: 
\begin{eqnarray*}
e_{1}^{2} &\leftrightarrow &-\lambda _{1}\lambda _{2}I_{4},\text{ \ \ \ }%
e_{2}^{2}\leftrightarrow -\lambda _{1}\lambda _{3}I_{4},\text{ \ \ }%
e_{3}^{2}\leftrightarrow -\lambda _{2}\lambda _{3}I_{4}, \\
e_{1}e_{2} &\leftrightarrow &\lambda _{1}E_{3},\text{ \ \ \ \ }%
e_{2}e_{1}\leftrightarrow -\lambda _{1}E_{3},\text{ \ \ }e_{2}e_{3}%
\leftrightarrow \lambda _{3}E_{1}, \\
e_{3}e_{2} &\leftrightarrow &-\lambda _{3}E_{1},\text{ \ \ }%
e_{1}e_{3}\leftrightarrow -\lambda _{2}E_{2},\text{ \ \ }e_{3}e_{1}%
\leftrightarrow \lambda _{2}E_{2}, \\
e_{1}e_{2}e_{3} &\leftrightarrow &-\lambda _{1}\lambda _{2}\lambda _{3}I_{4},%
\text{ \ \ }e_{2}e_{3}e_{1}\leftrightarrow -\lambda _{1}\lambda _{2}\lambda
_{3}I_{4}, \\
e_{3}e_{1}e_{2} &\leftrightarrow &-\lambda _{1}\lambda _{2}\lambda _{3}I_{4},%
\text{ \ \ }e_{1}e_{3}e_{2}\leftrightarrow \lambda _{1}\lambda _{2}\lambda
_{3}I_{4}, \\
e_{2}e_{1}e_{3} &\leftrightarrow &\lambda _{1}\lambda _{2}\lambda _{3}I_{4},%
\text{ \ \ }e_{3}e_{2}e_{1}\leftrightarrow \lambda _{1}\lambda _{2}\lambda
_{3}I_{4}
\end{eqnarray*}%
which gives us the multiplication table in Definition 2.1.
\subsection{Determinant, characteristic polynomial, characteristic equation, eigenvalues and eigenvectors of the matrix $\mathcal{M}$}

Determinant of matrix $\mathcal{M}$ is calculated as follows: 
\begin{align*}
\left\vert \mathcal{M}\right\vert &\mathcal{=}\left\vert 
\begin{array}{cccc}
a_{0} & -\lambda _{1}\lambda _{2}a_{1} & -\lambda _{1}\lambda _{3}a_{2} & 
-\lambda _{2}\lambda _{3}a_{3} \\ 
a_{1} & a_{0} & -\lambda _{3}a_{3} & \lambda _{3}a_{2} \\ 
a_{2} & \lambda _{2}a_{3} & a_{0} & -\lambda _{2}a_{1} \\ 
a_{3} & -\lambda _{1}a_{2} & \lambda _{1}a_{1} & a_{0}%
\end{array}%
\right\vert =\left( N_{p}\right) ^{2},  \label{421}
\end{align*}%
where $p=a_{0}+a_{1}e_{1}+a_{2}e_{2}+a_{3}e_{3}$.\newline
Characteristic polynomial of the matrix $\mathcal{M}$ is 
\begin{equation*}
P_{\mathcal{M}}\left( t\right) \allowbreak =\left(
t^{2}-2ta_{0}+a_{0}^{2}+\lambda _{1}\lambda _{2}a_{1}^{2}+\lambda
_{1}\lambda _{3}a_{2}^{2}+\lambda _{2}\lambda _{3}a_{3}^{2}\right)
^{2}\allowbreak.
\end{equation*}
Characteristic equation of the matrix $\mathcal{M}$ is 
\begin{align*}
\det \left( \mathcal{M-}tI_{4}\right) &=0  \notag \\
0 &=\left\vert 
\begin{array}{cccc}
a_{0}-t & -\lambda _{1}\lambda _{2}a_{1} & -\lambda _{1}\lambda _{3}a_{2} & 
-\lambda _{2}\lambda _{3}a_{3} \\ 
a_{1} & a_{0}-t & -\lambda _{3}a_{3} & \lambda _{3}a_{2} \\ 
a_{2} & \lambda _{2}a_{3} & a_{0}-t & -\lambda _{2}a_{1} \\ 
a_{3} & -\lambda _{1}a_{2} & \lambda _{1}a_{1} & a_{0}-t%
\end{array}%
\right\vert  \notag \\
0 &=\left( t^{2}-2ta_{0}+a_{0}^{2}+\lambda _{1}\lambda _{2}a_{1}^{2}+\lambda
_{1}\lambda _{3}a_{2}^{2}+\lambda _{2}\lambda _{3}a_{3}^{2}\right)
^{2}\allowbreak.
\end{align*}%
The four eigenvalues are both coincident in pairs and each other's
conjugate: 
\begin{eqnarray*}
t_{1,2} &=&a_{0}+\sqrt{-\lambda _{1}\lambda _{2}a_{1}^{2}-\lambda
_{1}\lambda _{3}a_{2}^{2}-\lambda _{2}\lambda _{3}a_{3}^{2}} \\
t_{3,4} &=&a_{0}-\sqrt{-\lambda _{1}\lambda _{2}a_{1}^{2}-\lambda
_{1}\lambda _{3}a_{2}^{2}-\lambda _{2}\lambda _{3}a_{3}^{2}}\allowbreak .
\end{eqnarray*}%
Multiplication of the eigenvalues is achieved as 
\begin{equation*}
a_{0}^{2}+\lambda _{1}\lambda _{2}a_{1}^{2}+\lambda _{1}\lambda
_{3}a_{2}^{2}+\lambda _{2}\lambda _{3}a_{3}^{2}=N(q).
\end{equation*}

Also there are two eigenvectors corresponding to the eigenvalue \linebreak $%
a_{0}+\sqrt{-\lambda _{1}\lambda _{2}a_{1}^{2}-\lambda _{1}\lambda
_{3}a_{2}^{2}-\lambda _{2}\lambda _{3}a_{3}^{2}}$ and these are
\begin{eqnarray*}
&&\left[ 
\begin{array}{c}
\frac{\lambda _{1}a_{2}\sqrt{-\lambda _{1}\lambda _{2}a_{1}^{2}-\lambda
_{1}\lambda _{3}a_{2}^{2}-\lambda _{2}\lambda _{3}a_{3}^{2}}-\lambda
_{1}\lambda _{2}a_{1}a_{3}}{\lambda _{1}a_{2}^{2}+\lambda _{2}a_{3}^{2}} \\ 
\frac{a_{3}\sqrt{-\lambda _{1}\lambda _{2}a_{1}^{2}-\lambda _{1}\lambda
_{3}a_{2}^{2}-\lambda _{2}\lambda _{3}a_{3}^{2}}+\lambda _{1}a_{1}a_{2}}{
\lambda _{1}a_{2}^{2}+\lambda _{2}a_{3}^{2}} \\ 
1 \\ 
0%
\end{array}
\right], \\
&&\left[ 
\begin{array}{c}
\frac{\lambda _{2}a_{3}\sqrt{-\lambda _{1}\lambda _{2}a_{1}^{2}-\lambda
_{1}\lambda _{3}a_{2}^{2}-\lambda _{2}\lambda _{3}a_{3}^{2}}+\lambda
_{1}\lambda _{2}a_{1}a_{2}}{\lambda _{1}a_{2}^{2}+\lambda _{2}a_{3}^{2}} \\ 
-\frac{a_{2}\sqrt{-\lambda _{1}\lambda _{2}a_{1}^{2}-\lambda _{1}\lambda
_{3}a_{2}^{2}-\lambda _{2}\lambda _{3}a_{3}^{2}}-\lambda _{2}a_{1}a_{3}}{
\lambda _{1}a_{2}^{2}+\lambda _{2}a_{3}^{2}} \\ 
0 \\ 
1%
\end{array}
\right] .
\end{eqnarray*}%
The eigenvectors corresponding to the eigenvalue \linebreak $a_{0}-\sqrt{%
-\lambda _{1}\lambda _{2}a_{1}^{2}-\lambda _{1}\lambda _{3}a_{2}^{2}-\lambda
_{2}\lambda _{3}a_{3}^{2}}$ are

\begin{eqnarray*}
&&\left[ 
\begin{array}{c}
-\frac{\lambda _{1}a_{2}\sqrt{-\lambda _{1}\lambda _{2}a_{1}^{2}-\lambda
_{1}\lambda _{3}a_{2}^{2}-\lambda _{2}\lambda _{3}a_{3}^{2}}+\lambda
_{1}\lambda _{2}a_{1}a_{3}}{\lambda _{1}a_{2}^{2}+\lambda _{2}a_{3}^{2}} \\ 
-\frac{a_{3}\sqrt{-\lambda _{1}\lambda _{2}a_{1}^{2}-\lambda _{1}\lambda
_{3}a_{2}^{2}-\lambda _{2}\lambda _{3}a_{3}^{2}}-\lambda _{1}a_{1}a_{2}}{
\lambda _{1}a_{2}^{2}+\lambda _{2}a_{3}^{2}} \\ 
1 \\ 
0%
\end{array}
\right],
\end{eqnarray*}
\begin{eqnarray*}
&&\left[ 
\begin{array}{c}
-\frac{\lambda _{2}a_{3}\sqrt{-\lambda _{1}\lambda _{2}a_{1}^{2}-\lambda
_{1}\lambda _{3}a_{2}^{2}-\lambda _{2}\lambda _{3}a_{3}^{2}}-\lambda
_{1}\lambda _{2}a_{1}a_{2}}{\lambda _{1}a_{2}^{2}+\lambda _{2}a_{3}^{2}} \\ 
\frac{a_{2}\sqrt{-\lambda _{1}\lambda _{2}a_{1}^{2}-\lambda _{1}\lambda
_{3}a_{2}^{2}-\lambda _{2}\lambda _{3}a_{3}^{2}}+\lambda _{2}a_{1}a_{3}}{
\lambda _{1}a_{2}^{2}+\lambda _{2}a_{3}^{2}} \\ 
0 \\ 
1%
\end{array}
\right].
\end{eqnarray*}
\section{Polar Representation, De Moivre's and Euler's Formulas for 3PGQs}
Euler's and De Moivre's formulas in complex number are generalized for
Hamilton quaternions in \cite{cho}. It has also been studied for split and
dual quaternions in \cite{kab, ozd}. Recently, De Moivre's and Euler's formulas
have been derived for matrices associated with real, dual quaternions \cite%
{jaf2, jaf3}. In generalized quaternion algebra, De Moivre's and Euler's
formulas are studied in \cite{jaf4}. In this section, the polar
representation of 3PGQs is studied. And the polar matrix representation of
the fundamental matrix $\mathcal{M}$ is created and De Moivre's and Euler's
formulas are composed for 3PGQs and the matrix $\mathcal{M}$.

\subsection{Polar representation of 3PGQs and the matrix $\mathcal{M}$}

We can associate an angle $\theta $ with a 3PGQ $%
p=a_{0}+a_{1}e_{1}+a_{2}e_{2}+a_{3}e_{3}$ as 
\begin{equation*}
\cos \theta =\frac{a_{0}}{\sqrt{N(p)}} \text{ \ and }\sin \theta =\frac{\sqrt{
\lambda _{1}\lambda _{2}a_{1}^{2}+\lambda _{1}\lambda _{3}a_{2}^{2}+\lambda
_{2}\lambda _{3}a_{3}^{2}}}{\sqrt{N(p)}}.
\end{equation*}

\begin{defn}
Any 3PGQ $p$ can be written in the polar form as the following: 
\begin{equation}
p=\sqrt{N(p)}\left( \cos \theta +\hat{p}\sin \theta \right)  \label{6}
\end{equation}%
where
\begin{equation*}
\hat{p}=\dfrac{\left( a_{1},a_{2},a_{3}\right) }{\sqrt{\lambda _{1}\lambda
_{2}a_{1}^{2}+\lambda _{1}\lambda _{3}a_{2}^{2}+\lambda _{2}\lambda
_{3}a_{3}^{2}}}
\end{equation*}%
is a 3-parameter generalized unit vector (3PGUV).
We will use $\hat{p}=\left( p_{1},p_{2},p_{3}\right) $ in order to
be more simple and short, where 
\begin{eqnarray*}
p_{1} &=&\dfrac{a_{1}}{\sqrt{\lambda _{1}\lambda _{2}a_{1}^{2}+\lambda
_{1}\lambda _{3}a_{2}^{2}+\lambda _{2}\lambda _{3}a_{3}^{2}}}, \\
p_{2} &=&\dfrac{a_{2}}{\sqrt{\lambda _{1}\lambda _{2}a_{1}^{2}+\lambda
_{1}\lambda _{3}a_{2}^{2}+\lambda _{2}\lambda _{3}a_{3}^{2}}}, \\
p_{3} &=&\dfrac{a_{3}}{\sqrt{\lambda _{1}\lambda _{2}a_{1}^{2}+\lambda
_{1}\lambda _{3}a_{2}^{2}+\lambda _{2}\lambda _{3}a_{3}^{2}}}.
\end{eqnarray*}%
Indeed, the form of $\hat{p}$ into Eq.(\ref{6}) is shown as the following: 
\begin{eqnarray*}
p &=&a_{0}+a_{1}e_{1}+a_{2}e_{2}+a_{3}e_{3} \\
&=&\sqrt{N(p)}\left( \frac{a_{0}}{\sqrt{N(p)}}+\frac{1}{\sqrt{N(p)}}\left(
a_{1}e_{1}+a_{2}e_{2}+a_{3}e_{3}\right) \right) \\
&=&\sqrt{N(p)}(\frac{a_{0}}{\sqrt{N(p)}}+\frac{\sqrt{\lambda _{1}\lambda
_{2}a_{1}^{2}+\lambda _{1}\lambda _{3}a_{2}^{2}+\lambda _{2}\lambda
_{3}a_{3}^{2}}}{\sqrt{N(p)}}. \\
&&(\frac{a_{1}}{\sqrt{\lambda _{1}\lambda _{2}a_{1}^{2}+\lambda _{1}\lambda
_{3}a_{2}^{2}+\lambda _{2}\lambda _{3}a_{3}^{2}}}e_{1}+\frac{a_{2}}{\sqrt{
\lambda _{1}\lambda _{2}a_{1}^{2}+\lambda _{1}\lambda _{3}a_{2}^{2}+\lambda
_{2}\lambda _{3}a_{3}^{2}}}e_{2} \\
&&+\frac{a_{3}}{\sqrt{\lambda _{1}\lambda _{2}a_{1}^{2}+\lambda _{1}\lambda
_{3}a_{2}^{2}+\lambda _{2}\lambda _{3}a_{3}^{2}}}e_{3})) \\
&=&\sqrt{N(p)}\left( \cos \theta +\left( p_{1},p_{2},p_{3}\right) \sin
\theta .\right) \\
&=&\sqrt{N(p)}\left( \cos \theta +\hat{p}\sin \theta \right).
\end{eqnarray*}
\subsection{Polar representation of the matrix $\mathcal{M}$} 
Let $p$ be a 3PGUQ. We can write 
\begin{eqnarray*}
p &=&a_{0}+a_{1}e_{1}+a_{2}e_{2}+a_{3}e_{3} \\
&=&\cos \theta +\hat{p}\sin \theta \\
&=&\cos \theta +\left( p_{1},p_{2},p_{3}\right) \sin \theta \\
&=&\cos \theta +p_{1}\sin \theta +p_{2}\sin \theta +p_{3}\sin \theta \\
&=&\left( \cos \theta ,p_{1}\sin \theta ,p_{2}\sin \theta ,p_{3}\sin \theta
\right)
\end{eqnarray*}%
and polar form of the matrix $\mathcal{M}$ is obtained as follows:
\begin{eqnarray*}
\mathcal{M} &=&\left[ 
\begin{array}{cccc}
\cos \theta & -\lambda _{1}\lambda _{2}p_{1}\sin \theta & -\lambda
_{1}\lambda _{3}p_{2}\sin \theta & -\lambda _{2}\lambda _{3}p_{3}\sin \theta
\\ 
p_{1}\sin \theta & \cos \theta & -\lambda _{3}p_{3}\sin \theta & \lambda
_{3}p_{2}\sin \theta \\ 
p_{2}\sin \theta & \lambda _{2}p_{3}\sin \theta & \cos \theta & -\lambda
_{2}p_{1}\sin \theta \\ 
p_{3}\sin \theta & -\lambda _{1}p_{2}\sin \theta & \lambda _{1}p_{1}\sin
\theta & \cos \theta%
\end{array}
\right].
\end{eqnarray*}
\end{defn}

\subsection{De Moivre's formula for 3PGQs}

Let us represent the set of 3PGUQs as $S_{\mathbb{K}}$ and the set of
3-parameter generalized unit vectors (3PGUVs) as $S_{\mathbb{K}}^{2}$. Namely%
\newline
$S_{\mathbb{K}}=\left\{ p\in \mathbb{K}:N_{p}=1\right\}$,%
\newline
$S_{\mathbb{K}}^{2}=\left\{ h \in \text{Im}(\mathbb{K}) :N_{h}=1\right\}$.

\begin{lem}
If \textit{\ }$v\in S_{\mathbb{K}}^{2}$, then 
\begin{equation*}
\left( \cos \alpha +v\sin \alpha \right) \left( \cos \beta +v\sin \beta
\right) =\cos \left( \alpha +\beta \right) +v\sin \left( \alpha +\beta
\right) .
\end{equation*}
\end{lem}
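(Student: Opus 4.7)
The plan is to reduce the identity to the single fact that any $v\in S_{\mathbb{K}}^{2}$ squares to $-1$, after which the statement becomes a purely scalar manipulation using the angle-addition formulas for $\cos$ and $\sin$.

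First I would establish the key algebraic relation $v^{2}=-1$ for $v\in S_{\mathbb{K}}^{2}$. Since $v$ is a pure 3PGQ, i.e.\ $S_{v}=0$, the multiplication formula for pure 3PGQs given just before Theorem 2.3 reads $V_{p}V_{q}=-f(V_{p},V_{q})+V_{p}\wedge V_{q}$. Taking $p=q=v$ kills the wedge term because the determinant defining $\wedge$ has two equal rows, so $v^{2}=-f(v,v)$. On the other hand, the norm formula (3.5) for a pure element gives $N_{v}=f(v,v)$. Combining these with $N_{v}=1$ (the defining condition of $S_{\mathbb{K}}^{2}$) yields $v^{2}=-1$. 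This is the only genuinely algebraic step of the lemma.

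Next I would simply expand the product on the left-hand side. Because $\cos\alpha,\sin\alpha,\cos\beta,\sin\beta$ are real scalars and the external operation $\odot$ commutes with the quaternion multiplication $\times$, bilinearity gives
\begin{equation*}
(\cos\alpha+v\sin\alpha)(\cos\beta+v\sin\beta)
=\cos\alpha\cos\beta+v(\sin\alpha\cos\beta+\cos\alpha\sin\beta)+v^{2}\sin\alpha\sin\beta.
\end{equation*}
Substituting $v^{2}=-1$ from the first step and grouping terms,
\begin{equation*}
=\bigl(\cos\alpha\cos\beta-\sin\alpha\sin\beta\bigr)+v\bigl(\sin\alpha\cos\beta+\cos\alpha\sin\beta\bigr).
\end{equation*}
The standard trigonometric addition formulas then identify the bracketed expressions with $\cos(\alpha+\beta)$ and $\sin(\alpha+\beta)$ respectively, which is exactly the claimed right-hand side.

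There is no real obstacle; the only subtlety is verifying $v^{2}=-1$ in the 3-parameter setting, where the individual basis squares $e_{i}^{2}$ depend on the parameters $\lambda_{j}$ and are in general not equal to $-1$. The unit-norm hypothesis, however, is precisely calibrated to absorb those parameters through $f(v,v)$, so the classical complex-number argument goes through verbatim once that computation is recorded.
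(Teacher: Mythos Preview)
Your argument is correct. The paper does not actually supply a proof here; it simply writes ``Proof is made similar to the proof in \cite{mer}.'' So you have filled in precisely the computation the authors chose to suppress, and your route---establish $v^{2}=-1$ from $N_{v}=f(v,v)=1$ and $v\wedge v=0$, then expand bilinearly and invoke the angle-addition formulas---is the natural one (the paper itself uses $v^{2}=-1$ explicitly a few lines later in deriving Euler's formula). One small correction: the norm identity you cite as ``(3.5)'' is equation~(\ref{320}) in Section~2, not Section~3.
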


\begin{proof}
Proof is made similar to the proof in \cite{mer}.
\end{proof}

\begin{thm}
For $p\in S_{\mathbb{K}}$, if $p=\cos \theta +\hat{p}\sin \theta $, then 
\begin{equation*}
p^{n}=\left( \cos \theta +\hat{p}\sin \theta \right) ^{n}=\cos \left(
n\theta \right) +\hat{p}\sin \left( n\theta \right) .
\end{equation*}
\end{thm}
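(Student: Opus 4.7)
The plan is to prove the formula by induction on $n$, with Lemma 4.2 providing essentially all of the inductive step. Before starting the induction, I would verify that Lemma 4.2 is applicable with $v = \hat p$, i.e.\ that $\hat p$ lies in $S_{\mathbb{K}}^2$. From the explicit formula for $\hat p$ and for the bilinear form $f$,
\begin{equation*}
N_{\hat p} \;=\; f(\hat p,\hat p) \;=\; \frac{\lambda_1\lambda_2 a_1^2 + \lambda_1\lambda_3 a_2^2 + \lambda_2\lambda_3 a_3^2}{\lambda_1\lambda_2 a_1^2 + \lambda_1\lambda_3 a_2^2 + \lambda_2\lambda_3 a_3^2} \;=\; 1,
\end{equation*}
so $\hat p$ is a 3PGUV, as needed.

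The base case $n = 1$ is just the hypothesis on $p$. For the inductive step, assume $p^n = \cos(n\theta) + \hat p\sin(n\theta)$. Then
\begin{equation*}
p^{n+1} \;=\; p^{n}\cdot p \;=\; \bigl(\cos(n\theta) + \hat p\sin(n\theta)\bigr)\bigl(\cos\theta + \hat p\sin\theta\bigr),
\end{equation*}
and applying Lemma 4.2 with $v = \hat p$, $\alpha = n\theta$, $\beta = \theta$ collapses the right-hand side directly to $\cos((n+1)\theta) + \hat p\sin((n+1)\theta)$, closing the induction.

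There is no serious obstacle; Lemma 4.2 carries all the algebraic content. The reason the lemma is reasonable in our setting is worth flagging: for any $v \in S_{\mathbb{K}}^2$, the pure-vector product formula $V_pV_q = -f(V_p,V_q) + V_p\wedge V_q$ evaluated at $p = q = v$ gives $v^2 = -f(v,v) = -N_v = -1$, so $v$ plays exactly the role of the imaginary unit in the classical complex-number proof, and the expansion of $(\cos\alpha + v\sin\alpha)(\cos\beta + v\sin\beta)$ reduces to the standard angle-sum identities. If one later wishes to extend the formula to $n \in \mathbb{Z}$, it suffices to observe that $p^{-1} = \bar p = \cos(-\theta) + \hat p\sin(-\theta)$ (using $N_p = 1$) and run the induction downward as well.
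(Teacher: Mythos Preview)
Your proof is correct and follows exactly the approach indicated in the paper: induction on $n$ together with Lemma~4.2 (applied with $v=\hat p$). The paper's own proof is in fact just a one-line reference to Lemma~4.2 and induction, so your write-up is a more detailed version of the same argument, including the check that $\hat p\in S_{\mathbb{K}}^2$ and the remark on negative powers.
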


\begin{proof}
The theorem is easily proved by using Lemma 4.2 and the induction method, similar to the proof in \cite{mer} 
\end{proof}

\subsection{De Moivre's formula for matrices associated with 3PGQs}

We will obtain De Moivre's formula for the matrices corresponding to the
3PGQ $p$. Let $p=\cos\alpha +\hat{p}\sin \alpha $ be polar representation of
a 3PGQ, where $\hat{p}$ is a 3PGUQ.

\begin{lem}
\begin{eqnarray*}
P &=&\left[ 
\begin{array}{cccc}
\cos \alpha & -\lambda _{1}\lambda _{2}p_{1}\sin \alpha & -\lambda
_{1}\lambda _{3}p_{2}\sin \alpha & -\lambda _{2}\lambda _{3}p_{3}\sin \alpha
\\ 
p_{1}\sin \alpha & \cos \alpha & -\lambda _{3}p_{3}\sin \alpha & \lambda
_{3}p_{2}\sin \alpha \\ 
p_{2}\sin \alpha & \lambda _{2}p_{3}\sin \alpha & \cos \alpha & -\lambda
_{2}p_{1}\sin \alpha \\ 
p_{3}\sin \alpha & -\lambda _{1}p_{2}\sin \alpha & \lambda _{1}p_{1}\sin
\alpha & \cos \alpha%
\end{array}
\right] \\
Q &=&\left[ 
\begin{array}{cccc}
\cos \beta & -\lambda _{1}\lambda _{2}p_{1}\sin \beta & -\lambda _{1}\lambda
_{3}p_{2}\sin \beta & -\lambda _{2}\lambda _{3}p_{3}\sin \beta \\ 
p_{1}\sin \beta & \cos \beta & -\lambda _{3}p_{3}\sin \beta & \lambda
_{3}p_{2}\sin \beta \\ 
p_{2}\sin \beta & \lambda _{2}p_{3}\sin \beta & \cos \beta & -\lambda
_{2}p_{1}\sin \beta \\ 
p_{3}\sin \beta & -\lambda _{1}p_{2}\sin \beta & \lambda _{1}p_{1}\sin \alpha
& \cos \beta%
\end{array}
\right]
\end{eqnarray*}%
the matrix $PQ$ $\allowbreak$ is achieved as 
\begin{equation*}
{\small \left[ 
\begin{array}{cccc}
\cos \text{(}\alpha \text{+}\beta \text{)} & -\lambda _{1}\lambda
_{2}p_{1}\sin \left( \alpha \text{+}\beta \right) & -\lambda _{1}\lambda
_{3}p_{2}\sin \left( \alpha \text{+}\beta \right) & -\lambda _{2}\lambda
_{3}p_{3}\sin \left( \alpha \text{+}\beta \right) \\ 
p_{1}\sin \left( \alpha \text{+}\beta \right) & \cos (\alpha \text{+}\beta )
& -\lambda _{3}p_{3}\sin \left( \alpha \text{+}\beta \right) & \lambda
_{3}p_{2}\sin \left( \alpha \text{+}\beta \right) \\ 
p_{2}\sin \left( \alpha \text{+}\beta \right) & \lambda _{2}p_{3}\sin \left(
\alpha \text{+}\beta \right) & \cos (\alpha \text{+}\beta ) & -\lambda
_{2}p_{1}\sin \left( \alpha \text{+}\beta \right) \\ 
p_{3}\sin \left( \alpha \text{+}\beta \right) & -\lambda _{1}p_{2}\sin
\left( \alpha \text{+}\beta \right) & \lambda _{1}p_{1}\sin \left( \alpha 
\text{+}\beta \right) & \cos (\alpha \text{+}\beta )%
\end{array}%
\right]}.
\end{equation*}
\end{lem}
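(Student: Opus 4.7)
The plan is to reduce the matrix identity to Lemma 4.2 via the ring isomorphism $\phi$ established in Theorem 3.1, rather than grind out the sixteen entrywise products. First I would observe that the displayed matrices $P$ and $Q$ are precisely $\phi(p)$ and $\phi(q)$ for the 3PGUQs
\[
p = \cos\alpha + \hat{p}\sin\alpha, \qquad q = \cos\beta + \hat{p}\sin\beta,
\]
where $\hat{p} = (p_1,p_2,p_3) \in S_{\mathbb{K}}^{2}$ is a common 3PGUV: this is immediate from substituting $a_0 = \cos\alpha$ and $a_i = p_i \sin\alpha$ ($i=1,2,3$) into the template for $\mathcal{M}$ given at the start of Section 3, and likewise for $\beta$.

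Since Theorem 3.1 provides $\phi(pq) = \phi(p)\otimes\phi(q)$, it follows that $PQ = \phi(pq)$, so it is enough to compute $pq$ inside $\mathbb{K}$. Because the two factors share the same unit imaginary vector $\hat{p}$, Lemma 4.2 applies verbatim and gives
\[
pq = \cos(\alpha+\beta) + \hat{p}\sin(\alpha+\beta).
\]
Writing the right-hand side as $\cos(\alpha+\beta) + p_1 e_1\sin(\alpha+\beta) + p_2 e_2\sin(\alpha+\beta) + p_3 e_3\sin(\alpha+\beta)$ and applying $\phi$ to it produces exactly the matrix displayed in the statement, completing the argument.

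The only real obstacle is a bookkeeping one: I must confirm that the template matrix $\mathcal{M}$ of Section 3, instantiated at $a_0 = \cos\alpha$, $a_i = p_i\sin\alpha$, literally reproduces the matrix $P$ as written (and similarly for $Q$); once that entrywise match is in hand, the isomorphism does all the work and no trigonometric expansion is needed. A direct computation of $PQ$ is of course possible, but it would force one to invoke the sum identities $\cos\alpha\cos\beta - \sin\alpha\sin\beta = \cos(\alpha+\beta)$ and $\sin\alpha\cos\beta + \cos\alpha\sin\beta = \sin(\alpha+\beta)$ inside every one of the sixteen entries, which is precisely the computational mess the isomorphism route bypasses.
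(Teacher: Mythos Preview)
Your argument is correct and takes a genuinely different route from the paper. The paper proves the lemma by brute-force entrywise multiplication: it writes $PQ=[a_{ij}]$, computes the diagonal entry $a_{11}$ (using the unit-vector constraint $\lambda_1\lambda_2 p_1^2+\lambda_1\lambda_3 p_2^2+\lambda_2\lambda_3 p_3^2=1$ to collapse $\cos\alpha\cos\beta-\sin\alpha\sin\beta$ into $\cos(\alpha+\beta)$), then computes the off-diagonal entry $a_{12}$, and finally dismisses the remaining fourteen entries with ``similarly''. You instead recognise $P=\phi(p)$ and $Q=\phi(q)$ for the polar quaternions $p=\cos\alpha+\hat p\sin\alpha$ and $q=\cos\beta+\hat p\sin\beta$, invoke the multiplicativity of $\phi$ from Theorem~3.1, and then cite Lemma~4.2 to evaluate $pq$ inside $\mathbb{K}$; one final application of $\phi$ yields the claimed matrix with no trigonometric expansion at all.

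What your approach buys is economy and structure: the sixteen entry checks are replaced by a single appeal to two already-established results, and the argument makes transparent \emph{why} the matrix formula holds---it is nothing but the image under a ring homomorphism of the quaternionic angle-addition law. The paper's direct computation, by contrast, is self-contained at the matrix level and does not require the reader to keep the isomorphism $\phi$ in mind, but at the cost of repetitive calculation. Since both Theorem~3.1 and Lemma~4.2 precede this lemma in the paper, there is no circularity in your route, and it is arguably the more natural proof.
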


\begin{proof}	
	Let $PQ=\left[ a_{ij}\right]_{4\times 4} $.	
	\begin{eqnarray*}
		a_{11} &=&a_{22}=a_{33}=a_{44} \\
		&=&\cos \alpha \cos \beta -\lambda _{1}\lambda _{2}p_{1}^{2}\sin \alpha \sin
		\beta -\lambda _{1}\lambda _{3}p_{2}^{2}\sin \alpha \sin \beta \\ &-&\lambda
		_{2}\lambda _{3}p_{3}^{2}\sin \alpha \sin \beta  \\
		&=&\cos \alpha \cos \beta -\left( \lambda _{1}\lambda _{2}p_{1}^{2}+\lambda
		_{1}\lambda _{3}p_{2}^{2}+\lambda _{2}\lambda _{3}p_{3}^{2}\right) \sin
		\alpha \sin \beta  \\
		&=&\cos \alpha \cos \beta -\sin \alpha \sin \beta =\cos (\alpha +\beta )
	\end{eqnarray*}%
	and%
	\begin{eqnarray*}
		a_{12} &=&-\lambda _{1}\lambda _{2}p_{1}\cos \alpha \sin \beta -\lambda
		_{1}\lambda _{2}p_{1}\sin \alpha \cos \beta  \\
		&&-\lambda _{1}\lambda _{2}\lambda _{3}p_{2}p_{3}\sin \alpha \sin \beta
		+\lambda _{1}\lambda _{2}\lambda _{3}p_{2}p_{3}\sin \alpha \sin \beta  \\
		&=&-\lambda _{1}\lambda _{2}p_{1}\left( \cos \alpha \sin \beta +\sin \alpha
		\cos \beta \right)  \\
		&&-\lambda _{1}\lambda _{2}\lambda _{3}p_{2}p_{3}\left( \sin \alpha \sin
		\beta -\sin \alpha \sin \beta \right)  \\
		\qquad  &=&-\lambda _{1}\lambda _{2}p_{1}\sin \left( \alpha +\beta \right) 
	\end{eqnarray*}
	Similarly, necessary calculations are made and the other elements are attained. 
\end{proof}
\begin{thm}
For any integer $n$, if 
\begin{equation*}
P=\left[ 
\begin{array}{cccc}
\cos \alpha & -\lambda _{1}\lambda _{2}p_{1}\sin \alpha & -\lambda
_{1}\lambda _{3}p_{2}\sin \alpha & -\lambda _{2}\lambda _{3}p_{3}\sin \alpha
\\ 
p_{1}\sin \alpha & \cos \alpha & -\lambda _{3}p_{3}\sin \alpha & \lambda
_{3}p_{2}\sin \alpha \\ 
p_{2}\sin \alpha & \lambda _{2}p_{3}\sin \alpha & \cos \alpha & -\lambda
_{2}p_{1}\sin \alpha \\ 
p_{3}\sin \alpha & -\lambda _{1}p_{2}\sin \alpha & \lambda _{1}p_{1}\sin
\alpha & \cos \alpha%
\end{array}%
\right],
\end{equation*}%
then the $n$th power of matrix $P$ is obtained as 
\begin{equation*}
\left[ 
\begin{array}{cccc}
\cos \left( n\alpha \right) & -\lambda _{1}\lambda _{2}p_{1}\sin \left(
n\alpha \right) & -\lambda _{1}\lambda _{3}p_{2}\sin \left( n\alpha \right)
& -\lambda _{2}\lambda _{3}p_{3}\sin \left( n\alpha \right) \\ 
p_{1}\sin \left( n\alpha \right) & \cos \left( n\alpha \right) & -\lambda
_{3}p_{3}\sin \left( n\alpha \right) & \lambda _{3}p_{2}\sin \left( n\alpha
\right) \\ 
p_{2}\sin \left( n\alpha \right) & \lambda _{2}p_{3}\sin \left( n\alpha
\right) & \cos \left( n\alpha \right) & -\lambda _{2}p_{1}\sin \left(
n\alpha \right) \\ 
p_{3}\sin \left( n\alpha \right) & -\lambda _{1}p_{2}\sin \left( n\alpha
\right) & \lambda _{1}p_{1}\sin \left( n\alpha \right) & \cos \left( n\alpha
\right)%
\end{array}%
\right].
\end{equation*}
\end{thm}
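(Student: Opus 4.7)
The plan is to prove this by induction on $n$, using Lemma 4.4 as the crucial multiplicative step. For a unified notation, let me write $P(\theta)$ for the $4\times 4$ matrix whose entries are exactly those displayed in the statement of the theorem but with $\alpha$ replaced by $\theta$ (so $P = P(\alpha)$ and the claimed $P^n$ equals $P(n\alpha)$). In these terms, Lemma 4.4 reads $P(\alpha)P(\beta) = P(\alpha+\beta)$, which is the whole engine of the proof.

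First I would handle $n \geq 0$ by induction. The base case $n=0$ is the observation that $P(0) = I_4$ since $\cos 0 = 1$ and $\sin 0 = 0$, matching $P^0 = I_4$. The case $n=1$ is the definition of $P$. For the inductive step, assume $P^n = P(n\alpha)$; then
\begin{equation*}
P^{n+1} = P^n \cdot P = P(n\alpha)\, P(\alpha) = P\bigl((n+1)\alpha\bigr),
\end{equation*}
where the last equality is a direct application of Lemma 4.4 (with the angles $n\alpha$ and $\alpha$ in place of $\alpha$ and $\beta$). This closes the induction for nonnegative $n$.

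Next I would extend to negative $n$. The key intermediate claim is $P^{-1} = P(-\alpha)$. To establish this, apply Lemma 4.4 with the second angle equal to $-\alpha$:
\begin{equation*}
P(\alpha)\, P(-\alpha) = P(0) = I_4,
\end{equation*}
so $P(-\alpha)$ is a right (and by symmetry, left) inverse of $P = P(\alpha)$. One should briefly note why $P$ is in fact invertible: since $\hat p = (p_1,p_2,p_3)$ is a 3PGUV, the norm of the underlying quaternion $\cos\alpha + \hat p\sin\alpha$ equals $\cos^2\alpha + (\lambda_1\lambda_2 p_1^2+\lambda_1\lambda_3 p_2^2+\lambda_2\lambda_3 p_3^2)\sin^2\alpha = 1$, and hence $\det P = (N_p)^2 = 1$ by the determinant formula computed in Section~4.3. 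Thus $P^{-1}$ exists and equals $P(-\alpha)$. Writing $n = -m$ with $m > 0$ and iterating Lemma 4.4 (or invoking the already-proved nonnegative case applied to $P(-\alpha)$) yields $P^{-m} = P(-\alpha)^m = P(-m\alpha) = P(n\alpha)$, completing the proof.

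The only substantive obstacle is Lemma 4.4 itself, which is already established; once it is in hand, the rest is bookkeeping. The mild subtlety is verifying that $\hat p$ being a unit vector is really what makes Lemma 4.4 collapse the middle cross terms (the coefficient $\lambda_1\lambda_2 p_1^2 + \lambda_1\lambda_3 p_2^2 + \lambda_2\lambda_3 p_3^2$ must equal $1$), and this same identity is what guarantees invertibility of $P$ and thus the validity of the extension to negative exponents.
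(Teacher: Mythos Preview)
Your proof is correct and follows essentially the same route as the paper: induction on $n$ for nonnegative powers using Lemma~4.4 as the multiplicative step, then a separate treatment of negative powers via $P^{-1}$. The one minor difference is in how $P^{-1}=P(-\alpha)$ is obtained: the paper computes the inverse matrix explicitly and then invokes the parity of $\cos$ and $\sin$ to recognize it as $P(-\alpha)$, whereas you deduce it directly from Lemma~4.4 via $P(\alpha)P(-\alpha)=P(0)=I_4$ together with the determinant formula $\det P=(N_p)^2=1$. Your handling is a bit cleaner, but the two arguments are otherwise the same.
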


\begin{proof}
We can prove this by means of induction method. First, let us show correctness of the theorem for $n\geq 2$.
For $n=2$, by using Lemma 5.4 and replacing the $Q$ matrix with the $P$ matrix, we obtain 
\begin{equation*} \small
P^{2}=\left[ 
\begin{array}{cccc}
\cos \left( 2\alpha \right) & -\lambda _{1}\lambda _{2}p_{1}\sin \left(
2\alpha \right) & -\lambda _{1}\lambda _{3}p_{2}\sin \left( 2\alpha \right)
& -\lambda _{2}\lambda _{3}p_{3}\sin \left( 2\alpha \right) \\ 
p_{1}\sin \left( 2\alpha \right) & \cos \left( 2\alpha \right) & -\lambda
_{3}p_{3}\sin \left( 2\alpha \right) & \lambda _{3}p_{2}\sin \left( 2\alpha
\right) \\ 
p_{2}\sin \left( 2\alpha \right) & \lambda _{2}p_{3}\sin \left( 2\alpha
\right) & \cos \left( 2\alpha \right) & -\lambda _{2}p_{1}\sin \left(
2\alpha \right) \\ 
p_{3}\sin \left( 2\alpha \right) & -\lambda _{1}p_{2}\sin \left( 2\alpha
\right) & \lambda _{1}p_{1}\sin \left( 2\alpha \right) & \cos \left( 2\alpha
\right)%
\end{array}%
\right].
\end{equation*}%
For $n=k$, let it be correct. For $n=k+1$, by using
$P^{k+1}=P^{k}P$ and Lemma 5.4, $P^{k+1}$ is obtained as
\begin{eqnarray*}
	 \left[ 
	\begin{array}{ccc}
		\cos \left( \left( k+1\right) \alpha \right) & -\lambda _{1}\lambda
		_{2}p_{1}\sin \left( \left( k+1\right) \alpha \right) & -\lambda _{1}\lambda
		_{3}p_{2}\sin \left( \left( k+1\right) \alpha \right) \\ 
		p_{1}\sin \left( \left( k+1\right) \alpha \right) & \cos \left( \left(
		k+1\right) \alpha \right) & -\lambda _{3}p_{3}\sin \left( \left( k+1\right)
		\alpha \right) \\ 
		p_{2}\sin \left( \left( k+1\right) \alpha \right) & \lambda _{2}p_{3}\sin
		\left( \left( k+1\right) \alpha \right) & \cos \left( \left( k+1\right)
		\alpha \right) \\ 
		p_{3}\sin \left( \left( k+1\right) \alpha \right) & -\lambda _{1}p_{2}\sin
		\left( \left( k+1\right) \alpha \right) & \lambda _{1}p_{1}\sin \left(
		\left( k+1\right) \alpha \right)%
	\end{array}%
	\right. \\
	\hspace*{7cm}\left. 
	\begin{array}{c}
		-\lambda _{2}\lambda _{3}p_{3}\sin \left( \left( k+1\right) \alpha \right)
		\\ 
		\lambda _{3}p_{2}\sin \left( \left( k+1\right) \alpha \right) \\ 
		-\lambda _{2}p_{1}\sin \left( \left( k+1\right) \alpha \right) \\ 
		\cos \left( \left( k+1\right) \alpha \right)%
	\end{array}%
	\right]
\end{eqnarray*}%
We find the matrix $P^{-1}$ by calculation of inverse matrix  as
\begin{equation*}
P^{-1}=\left[ 
\begin{array}{cccc}
\cos \alpha & \lambda _{1}\lambda _{2}p_{1}\sin \alpha & \lambda _{1}\lambda
_{3}p_{2}\sin \alpha & \lambda _{2}\lambda _{3}p_{3}\sin \alpha \\ 
-p_{1}\sin \alpha & \cos \alpha & \lambda _{3}p_{3}\sin \alpha & -\lambda
_{3}p_{2}\sin \alpha \\ 
-p_{2}\sin \alpha & -\lambda _{2}p_{3}\sin \alpha & \cos \alpha & \lambda
_{2}p_{1}\sin \alpha \\ 
-p_{3}\sin \alpha & \lambda _{1}p_{2}\sin \alpha & -\lambda _{1}p_{1}\sin
\alpha & \cos \alpha%
\end{array}%
\right].
\end{equation*}%
Since cosine is even function and sine is odd function, the matrix $P^{-1}$ is written as
\begin{equation*} \small
P^{\text{-1}}=\left[ 
\begin{array}{cccc}
\cos \left( -\alpha \right) & -\lambda _{1}\lambda _{2}p_{1}\sin \left(
-\alpha \right) & -\lambda _{1}\lambda _{3}p_{2}\sin \left( -\alpha \right)
& -\lambda _{2}\lambda _{3}p_{3}\sin \left( -\alpha \right) \\ 
p_{1}\sin \left( -\alpha \right) & \cos \left( -\alpha \right) & -\lambda
_{3}p_{3}\sin \left( -\alpha \right) & \lambda _{3}p_{2}\sin \left( -\alpha
\right) \\ 
p_{2}\sin \left( -\alpha \right) & \lambda _{2}p_{3}\sin \left( -\alpha
\right) & \cos \left( -\alpha \right) & -\lambda _{2}p_{1}\sin \left(
-\alpha \right) \\ 
p_{3}\sin \left( -\alpha \right) & -\lambda _{1}p_{2}\sin \left( -\alpha
\right) & \lambda _{1}p_{1}\sin \left( -\alpha \right) & \cos \left( -\alpha
\right)%
\end{array}%
\right]
\end{equation*}%
According to this, $P^{-n}$ is achieved as
\begin{equation*} \small
\left[ 
\begin{array}{cccc}
\cos \left( -n\alpha \right) & -\lambda _{1}\lambda _{2}p_{1}\sin \left(
-n\alpha \right) & -\lambda _{1}\lambda _{3}p_{2}\sin \left( -n\alpha \right)
& -\lambda _{2}\lambda _{3}p_{3}\sin \left( -n\alpha \right) \\ 
p_{1}\sin \left( -n\alpha \right) & \cos \left( -n\alpha \right) & -\lambda
_{3}p_{3}\sin \left( -n\alpha \right) & \lambda _{3}p_{2}\sin \left(
-n\alpha \right) \\ 
p_{2}\sin \left( -n\alpha \right) & \lambda _{2}p_{3}\sin \left( -n\alpha
\right) & \cos \left( -n\alpha \right) & -\lambda _{2}p_{1}\sin \left(
-n\alpha \right) \\ 
p_{3}\sin \left( -n\alpha \right) & -\lambda _{1}p_{2}\sin \left( -n\alpha
\right) & \lambda _{1}p_{1}\sin \left( -n\alpha \right) & \cos \left(
-n\alpha \right)%
\end{array}%
\right].
\end{equation*}%
\end{proof}

\begin{ex}
Let a 3PGUQ be 
\begin{equation*}
p=-\frac{1}{2}+\frac{1}{2}\left( \frac{1}{\sqrt{\lambda _{1}\lambda _{2}}},%
\frac{1}{\sqrt{\lambda _{1}\lambda _{3}}},\frac{1}{\sqrt{\lambda _{2}\lambda
_{3}}}\right).
\end{equation*}%
Polar representation of $p$ can be expressed as 
\begin{equation*}
p=\cos \frac{2\pi }{3}+\frac{1}{\sqrt{3}}\left( \frac{1}{\sqrt{\lambda
_{1}\lambda _{2}}},\frac{1}{\sqrt{\lambda _{1}\lambda _{3}}},\frac{1}{\sqrt{
\lambda _{2}\lambda _{3}}}\right) \sin \frac{2\pi }{3}.
\end{equation*}%
If 
\begin{equation*}
\hat{p}=\frac{1}{\sqrt{3}}\left( \frac{1}{\sqrt{\lambda _{1}\lambda _{2}}},%
\frac{1}{\sqrt{\lambda _{1}\lambda _{3}}},\frac{1}{\sqrt{\lambda _{2}\lambda
_{3}}}\right)
\end{equation*}%
then 
\begin{equation*}
p_{1}=\frac{1}{\sqrt{3\lambda _{1}\lambda _{2}}},\text{ \ \ }p_{2}=\frac{1}{ 
\sqrt{3\lambda _{1}\lambda _{3}}},\text{ \ \ }p_{3}=\frac{1}{\sqrt{3\lambda
_{2}\lambda _{3}}}.
\end{equation*}%
The matrix representation of $p$ is
\begin{equation*}
A=\left[ 
\begin{array}{cccc}
\frac{-1}{2} & \frac{-\sqrt{\lambda _{1}\lambda _{2}}}{2} & \frac{-\sqrt{
\lambda _{1}\lambda _{3}}}{2} & \frac{-\sqrt{\lambda _{2}\lambda _{3}}}{2}
\\ 
\frac{1}{2\sqrt{\lambda _{1}\lambda _{2}}} & -\frac{1}{2} & \frac{-\sqrt{
\lambda _{3}}}{2\sqrt{\lambda _{2}}} & \frac{\sqrt{\lambda _{3}}}{2\sqrt{
\lambda _{1}}} \\ 
\frac{1}{2\sqrt{\lambda _{1}\lambda _{3}}} & \frac{\sqrt{\lambda _{2}}}{2 
\sqrt{\lambda _{3}}} & -\frac{1}{2} & \frac{-\sqrt{\lambda _{2}}}{2\sqrt{
\lambda _{3}}} \\ 
\frac{1}{2\sqrt{\lambda _{2}\lambda _{3}}} & \frac{-\sqrt{\lambda _{1}}}{2 
\sqrt{\lambda _{3}}} & \frac{\sqrt{\lambda _{1}}}{2\sqrt{\lambda _{2}}} & -%
\frac{1}{2}%
\end{array}%
\right].
\end{equation*}%
and the matrix polar representation of $p$ is 
\begin{equation*}
A=\left[ 
\begin{array}{cccc}
\cos \frac{2\pi }{3} & -\lambda _{1}\lambda _{2}p_{1}\sin \frac{2\pi }{3} & 
-\lambda _{1}\lambda _{3}p_{2}\sin \frac{2\pi }{3} & -\lambda _{2}\lambda
_{3}p_{3}\sin \frac{2\pi }{3} \\ 
p_{1}\sin \frac{2\pi }{3} & \cos \frac{2\pi }{3} & -\lambda _{3}p_{3}\sin 
\frac{2\pi }{3} & \lambda _{3}p_{2}\sin \frac{2\pi }{3} \\ 
p_{2}\sin \frac{2\pi }{3} & \lambda _{2}p_{3}\sin \frac{2\pi }{3} & \cos 
\frac{2\pi }{3} & -\lambda _{2}p_{1}\sin \frac{2\pi }{3} \\ 
p_{3}\sin \frac{2\pi }{3} & -\lambda _{1}p_{2}\sin \frac{2\pi }{3} & \lambda
_{1}p_{1}\sin \frac{2\pi }{3} & \cos \frac{2\pi }{3}%
\end{array}%
\right].
\end{equation*}%
$5$th and $21$th powers of $p$ are 
\begin{eqnarray*}
p^{5} &=&\cos \left( 5.\frac{2\pi }{3}\right) +\frac{1}{\sqrt{3}}\left( 
\frac{1}{\sqrt{\lambda _{1}\lambda _{2}}},\frac{1}{\sqrt{\lambda _{1}\lambda
_{3}}},\frac{1}{\sqrt{\lambda _{2}\lambda _{3}}}\right) \sin \left( 5.\frac{
2\pi }{3}\right) \\
&=&\cos \left( \frac{\pi }{3}\right) +\frac{1}{\sqrt{3}}\left( \frac{1}{ 
\sqrt{\lambda _{1}\lambda _{2}}},\frac{1}{\sqrt{\lambda _{1}\lambda _{3}}}, 
\frac{1}{\sqrt{\lambda _{2}\lambda _{3}}}\right) \sin \left( \frac{\pi }{3}
\right) \\
&=&\frac{1}{2}+\frac{1}{2}\left( \frac{1}{\sqrt{\lambda _{1}\lambda _{2}}}, 
\frac{1}{\sqrt{\lambda _{1}\lambda _{3}}},\frac{1}{\sqrt{\lambda _{2}\lambda
_{3}}}\right)
\end{eqnarray*}%
and 
\begin{eqnarray*}
p^{21} &=&\cos \left( 21.\frac{2\pi }{3}\right) +\frac{1}{\sqrt{3}}\left( 
\frac{1}{\sqrt{\lambda _{1}\lambda _{2}}},\frac{1}{\sqrt{\lambda _{1}\lambda
_{3}}},\frac{1}{\sqrt{\lambda _{2}\lambda _{3}}}\right) \sin \left( 21.\frac{
2\pi }{3}\right) \\
&=&\cos 0+\frac{1}{\sqrt{3}}\left( \frac{1}{\sqrt{\lambda _{1}\lambda _{2}}}%
, \frac{1}{\sqrt{\lambda _{1}\lambda _{3}}},\frac{1}{\sqrt{\lambda
_{2}\lambda _{3}}}\right) \sin 0 \\
&=&1.
\end{eqnarray*}%
The matrix form is easily calculated as 
\begin{equation*}
A^{5}=\left[ 
\begin{array}{cccc}
\frac{1}{2} & \frac{-\sqrt{\lambda _{1}\lambda _{2}}}{2} & \frac{-\sqrt{
\lambda _{1}\lambda _{3}}}{2} & \frac{-\sqrt{\lambda _{2}\lambda _{3}}}{2}
\\ 
\frac{1}{2\sqrt{\lambda _{1}\lambda _{2}}} & \frac{1}{2} & \frac{-\sqrt{
\lambda _{3}}}{2\sqrt{\lambda _{2}}} & \frac{\sqrt{\lambda _{3}}}{2\sqrt{
\lambda _{1}}} \\ 
\frac{1}{2\sqrt{\lambda _{1}\lambda _{3}}} & \frac{\sqrt{\lambda _{2}}}{2 
\sqrt{\lambda _{3}}} & \frac{1}{2} & \frac{-\sqrt{\lambda _{2}}}{2\sqrt{
\lambda _{3}}} \\ 
\frac{1}{2\sqrt{\lambda _{2}\lambda _{3}}} & \frac{-\sqrt{\lambda _{1}}}{2 
\sqrt{\lambda _{3}}} & \frac{\sqrt{\lambda _{1}}}{2\sqrt{\lambda _{2}}} & 
\frac{1}{2}%
\end{array}%
\right]
\end{equation*}%
and 
\begin{equation*}
A^{21}=\left[ 
\begin{array}{cccc}
1 & 0 & 0 & 0 \\ 
0 & 1 & 0 & 0 \\ 
0 & 0 & 1 & 0 \\ 
0 & 0 & 0 & 1%
\end{array}%
\right]
\end{equation*}
\end{ex}

\subsection{Euler's formula for 3PGQs}

For any $v\in S_{\mathbb{K}}^{2}$, we know $v^{2}=-1$. Then 
\begin{equation*}
v^{3}=-v,\text{ \ \ }v^{4}=1,\text{ \ \ }v^{5}=v,\text{ \ \ }v^{6}=-1,\text{
\ }\ldots
\end{equation*}%
Euler's formula for 3PGQs with any angle $\theta $ is obtained as 
\begin{eqnarray*}
e^{v\theta } &=&1+v\theta +v^{2}\frac{\theta ^{2}}{2}+v^{3}\frac{\theta
^{3}}{3!}+v^{4}\frac{\theta ^{4}}{4!}+\cdots \\
&&1+v\theta -\frac{\theta ^{2}}{2}-v\frac{\theta ^{3}}{3!}+\frac{\theta
^{4}}{4!}+\cdots \\
&=&1-\frac{\theta ^{2}}{2!}+\frac{\theta ^{4}}{4!}-\cdots +v\left( \theta
-\frac{\theta ^{3}}{3!}+\frac{\theta ^{5}}{5!}-\cdots \right) \\
&=&\cos \theta +v\sin \theta.
\end{eqnarray*}

\subsection{Euler's formula for associated matrices with 3PGQs}

Let us choose any matrix $\mathcal{P}$ as follows: 
\begin{equation*}
\mathcal{P}=\left[ 
\begin{array}{cccc}
0 & -\lambda _{1}\lambda _{2}p_{1} & -\lambda _{1}\lambda _{3}p_{2} & 
-\lambda _{2}\lambda _{3}p_{3} \\ 
p_{1} & 0 & -\lambda _{3}p_{3} & \lambda _{3}p_{2} \\ 
p_{2} & \lambda _{2}p_{3} & 0 & -\lambda _{2}p_{1} \\ 
p_{3} & -\lambda _{1}p_{2} & \lambda _{1}p_{1} & 0%
\end{array}%
\right]
\end{equation*}%
Since $\lambda _{1}\lambda _{2}p_{1}^{2}+\lambda _{1}\lambda
_{3}p_{2}^{2}+\lambda _{2}\lambda _{3}p_{3}^{2}=1$, $\mathcal{P}^{2}=-I_{4}$
is easy to see. Then 
\begin{eqnarray*}
e^{\mathcal{P}\alpha } &=&I_{4}+\mathcal{P}\alpha +\frac{\left( \mathcal{P}
\alpha \right) ^{2}}{2!}+\frac{\left( \mathcal{P}\alpha \right) ^{3}}{3!}+ 
\frac{\left( \mathcal{P}\alpha \right) ^{4}}{4!} \\
&=&I_{4}\left( 1-\frac{\alpha ^{2}}{2!}+\frac{\alpha
^{4}}{4!}-\cdots \right) +\mathcal{P}\left( \alpha -\frac{\alpha ^{3}}{2!}+ 
\frac{\alpha ^{5}}{3!}-\cdots \right) \\
&=&\cos \alpha +\mathcal{P}\sin \alpha \\
&=&\cos \alpha +\left[ 
\begin{array}{cccc}
0 & -\lambda _{1}\lambda _{2}p_{1} & -\lambda _{1}\lambda _{3}p_{2} & 
-\lambda _{2}\lambda _{3}p_{3} \\ 
p_{1} & 0 & -\lambda _{3}p_{3} & \lambda _{3}p_{2} \\ 
p_{2} & \lambda _{2}p_{3} & 0 & -\lambda _{2}p_{1} \\ 
p_{3} & -\lambda _{1}p_{2} & \lambda _{1}p_{1} & 0%
\end{array}
\right] \sin \alpha \\
 &=&\left[ 
\begin{array}{cccc}
\cos \alpha & -\lambda _{1}\lambda _{2}p_{1}\sin \alpha & -\lambda
_{1}\lambda _{3}p_{2}\sin \alpha & -\lambda _{2}\lambda _{3}p_{3}\sin \alpha
\\ 
p_{1}\sin \alpha & \cos \alpha & -\lambda _{3}p_{3}\sin \alpha & \lambda
_{3}p_{2}\sin \alpha \\ 
p_{2}\sin \alpha & \lambda _{2}p_{3}\sin \alpha & \cos \alpha & -\lambda
_{2}p_{1}\sin \alpha \\ 
p_{3}\sin \alpha & -\lambda _{1}p_{2}\sin \alpha & \lambda _{1}p_{1}\sin
\alpha & \cos \alpha%
\end{array}
\right] \\
&=&P.
\end{eqnarray*}

\subsection{\textit The {n}th roots of matrices associated with 3PGQs}

\begin{equation*}
{\tiny A=\left[ 
\begin{array}{cccc}
\cos \left( \alpha \text{+}2k\pi \right) & -\lambda _{1}\lambda
_{2}p_{1}\sin \left( \alpha \text{+}2k\pi \right) & -\lambda _{1}\lambda
_{3}p_{2}\sin \left( \alpha \text{+}2k\pi \right) & -\lambda _{2}\lambda
_{3}p_{3}\sin \left( \alpha \text{+}2k\pi \right) \\ 
p_{1}\sin \left( \alpha \text{+}2k\pi \right) & \cos \left( \alpha \text{+}%
2k\pi \right) & -\lambda _{3}p_{3}\sin \left( \alpha \text{+}2k\pi \right) & 
\lambda _{3}p_{2}\sin \left( \alpha \text{+}2k\pi \right) \\ 
p_{2}\sin \left( \alpha \text{+}2k\pi \right) & \lambda _{2}p_{3}\sin \left(
\alpha \text{+}2k\pi \right) & \cos \left( \alpha \text{+}2k\pi \right) & 
-\lambda _{2}p_{1}\sin \left( \alpha \text{+}2k\pi \right) \\ 
p_{3}\sin \left( \alpha \text{+}2k\pi \right) & -\lambda _{1}p_{2}\sin
\left( \alpha \text{+}2k\pi \right) & \lambda _{1}p_{1}\sin \left( \alpha 
\text{+}2k\pi \right) & \cos \left( \alpha \text{+}2k\pi \right)%
\end{array}%
\right]}
\end{equation*}%
where $k\in \mathbb{Z}.$ The equation $X^{n}=A$ has $n$ roots. These roots
are found as $A_{k}^{\frac{1}{n}}=$ 
\begin{equation*}
{\tiny \left[ 
\begin{array}{cccc}
\cos \left( \frac{\alpha +2k\pi }{n}\right) & -\lambda _{1}\lambda
_{2}p_{1}\sin \left( \frac{\alpha +2k\pi }{n}\right) & -\lambda _{1}\lambda
_{3}p_{2}\sin \left( \frac{\alpha +2k\pi }{n}\right) & -\lambda _{2}\lambda
_{3}p_{3}\sin \left( \frac{\alpha +2k\pi }{n}\right) \\ 
p_{1}\sin \left( \frac{\alpha +2k\pi }{n}\right) & \cos \left( \frac{\alpha
+2k\pi }{n}\right) & -\lambda _{3}p_{3}\sin \left( \frac{\alpha +2k\pi }{n}%
\right) & \lambda _{3}p_{2}\sin \left( \frac{\alpha +2k\pi }{n}\right) \\ 
p_{2}\sin \left( \frac{\alpha +2k\pi }{n}\right) & \lambda _{2}p_{3}\sin
\left( \frac{\alpha +2k\pi }{n}\right) & \cos \left( \frac{\alpha +2k\pi }{n}%
\right) & -\lambda _{2}p_{1}\sin \left( \frac{\alpha +2k\pi }{n}\right) \\ 
p_{3}\sin \left( \frac{\alpha +2k\pi }{n}\right) & -\lambda _{1}p_{2}\sin
\left( \frac{\alpha +2k\pi }{n}\right) & \lambda _{1}p_{1}\sin \left( \frac{
\alpha +2k\pi }{n}\right) & \cos \left( \frac{\alpha +2k\pi }{n}\right)%
\end{array}%
\right]}
\end{equation*}%
For $k=0$, the first root is 
\begin{equation*}
A_{0}^{\frac{1}{n}}=\left[ 
\begin{array}{cccc}
\cos \left( \frac{\alpha }{n}\right) & -\lambda _{1}\lambda _{2}p_{1}\sin
\left( \frac{\alpha }{n}\right) & -\lambda _{1}\lambda _{3}p_{2}\sin \left( 
\frac{\alpha }{n}\right) & -\lambda _{2}\lambda _{3}p_{3}\sin \left( \frac{
\alpha }{n}\right) \\ 
p_{1}\sin \left( \frac{\alpha }{n}\right) & \cos \left( \frac{\alpha }{n}%
\right) & -\lambda _{3}p_{3}\sin \left( \frac{\alpha }{n}\right) & \lambda
_{3}p_{2}\sin \left( \frac{\alpha }{n}\right) \\ 
p_{2}\sin \left( \frac{\alpha }{n}\right) & \lambda _{2}p_{3}\sin \left( 
\frac{\alpha }{n}\right) & \cos \left( \frac{\alpha }{n}\right) & -\lambda
_{2}p_{1}\sin \left( \frac{\alpha }{n}\right) \\ 
p_{3}\sin \left( \frac{\alpha }{n}\right) & -\lambda _{1}p_{2}\sin \left( 
\frac{\alpha }{n}\right) & \lambda _{1}p_{1}\sin \left( \frac{\alpha }{n}%
\right) & \cos \left( \frac{\alpha }{n}\right)%
\end{array}%
\right],
\end{equation*}%
For $k=1$, the second root is 
\begin{equation*}
{\tiny A_{1}^{\frac{1}{n}}=\left[ 
\begin{array}{cccc}
\cos \left( \frac{\alpha +2\pi }{n}\right) & -\lambda _{1}\lambda
_{2}p_{1}\sin \left( \frac{\alpha +2\pi }{n}\right) & -\lambda _{1}\lambda
_{3}p_{2}\sin \left( \frac{\alpha +2\pi }{n}\right) & -\lambda _{2}\lambda
_{3}p_{3}\sin \left( \frac{\alpha +2\pi }{n}\right) \\ 
p_{1}\sin \left( \frac{\alpha +2\pi }{n}\right) & \cos \left( \frac{\alpha
+2\pi }{n}\right) & -\lambda _{3}p_{3}\sin \left( \frac{\alpha +2\pi }{n}%
\right) & \lambda _{3}p_{2}\sin \left( \frac{\alpha +2\pi }{n}\right) \\ 
p_{2}\sin \left( \frac{\alpha +2\pi }{n}\right) & \lambda _{2}p_{3}\sin
\left( \frac{\alpha +2\pi }{n}\right) & \cos \left( \frac{\alpha +2\pi }{n}%
\right) & -\lambda _{2}p_{1}\sin \left( \frac{\alpha +2\pi }{n}\right) \\ 
p_{3}\sin \left( \frac{\alpha +2\pi }{n}\right) & -\lambda _{1}p_{2}\sin
\left( \frac{\alpha +2\pi }{n}\right) & \lambda _{1}p_{1}\sin \left( \frac{
\alpha +2\pi }{n}\right) & \cos \left( \frac{\alpha +2\pi }{n}\right)%
\end{array}%
\right]}.
\end{equation*}%
Similarly, for $k=n-1$, the $n$th root is obtained.

\subsection{Relations among the powers of matrices associated with 3PGQs}

\begin{thm}
Let $m=\frac{2\pi }{\theta }\in \mathbb{Z}^{+}-\left\{ 1\right\} $ and
polar expression of any 3PGUQ $p$ be $p=\cos \theta +v\sin \theta$. Then $n\equiv s\left( {mod}\ m\right) $ if and only if $p^{n}=p^{s}$.
\end{thm}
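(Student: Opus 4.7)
The plan is to reduce the equation $p^n = p^s$ to a pair of trigonometric identities by invoking De Moivre's formula (Theorem 4.3), and then translate those identities into the modular condition using the definition $m = 2\pi/\theta$. Since $p$ is a 3PGUQ with polar form $p = \cos\theta + v\sin\theta$ where $v \in S_{\mathbb{K}}^{2}$, Theorem 4.3 gives
\begin{equation*}
p^{n} = \cos(n\theta) + v\sin(n\theta), \qquad p^{s} = \cos(s\theta) + v\sin(s\theta).
\end{equation*}

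For the forward direction, I would assume $n \equiv s \pmod{m}$, so $n - s = km$ for some integer $k$. Then $(n-s)\theta = km\theta = 2k\pi$, and the $2\pi$-periodicity of sine and cosine yields $\cos(n\theta) = \cos(s\theta)$ and $\sin(n\theta) = \sin(s\theta)$, whence $p^{n} = p^{s}$ by the displayed formulas above.

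For the converse, I would assume $p^{n} = p^{s}$, so
\begin{equation*}
\bigl(\cos(n\theta) - \cos(s\theta)\bigr) + v\bigl(\sin(n\theta) - \sin(s\theta)\bigr) = 0.
\end{equation*}
The key step is now a linear-independence observation: because $v \in S_{\mathbb{K}}^{2} \subset \operatorname{Im}(\mathbb{K})$, the element $v$ has zero scalar part and is nonzero, so $\{1, v\}$ is $\mathbb{R}$-linearly independent inside the real vector space $\mathbb{K}$. Separating the scalar and vector parts therefore forces $\cos(n\theta) = \cos(s\theta)$ and $\sin(n\theta) = \sin(s\theta)$ simultaneously, which implies $(n-s)\theta \in 2\pi\mathbb{Z}$. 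Using $m = 2\pi/\theta$, this says $n - s \in m\mathbb{Z}$, that is, $n \equiv s \pmod{m}$.

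The only mild obstacle is the linear-independence step; one must justify that separating scalar from vector components is legitimate, which follows at once because $v$ is a pure 3PGQ with $N_v = 1$ and hence $v \neq 0$ has no scalar component. Everything else is a direct application of De Moivre's formula and the definition of $m$, so the argument is essentially a two-line computation once the polar representation is in hand.
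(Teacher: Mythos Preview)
Your proof is correct and follows essentially the same route as the paper: both directions use De Moivre's formula to reduce $p^n=p^s$ to the pair $\cos(n\theta)=\cos(s\theta)$, $\sin(n\theta)=\sin(s\theta)$, and then convert this to the congruence via $m\theta=2\pi$. Your explicit justification of the linear independence of $\{1,v\}$ is a small improvement over the paper, which simply asserts the trigonometric equalities without comment.
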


\begin{proof}
Let $n\equiv s\left( {mod}\ m\right) $.
\begin{eqnarray*}
	p^{n} &=&\left( \cos \theta +\hat{p}\sin \theta \right) ^{n} \\
	&=&\cos \left( n\theta \right) +\hat{p}\sin \left( n\theta \right) \\
	&=&\cos \left( \left( mk+s\right) \theta \right) +\hat{p}\sin \left( \left(
	mk+s\right) \theta \right) \\
	&=&\cos \left( \left( \frac{2\pi }{\theta }k+s\right) \theta \right) +\hat{%
		p}\sin \left( \left( \frac{2\pi }{\theta }k+s\right) \theta \right) \\
	&=&\cos \left( 2\pi k+s\theta \right) +\hat{p}\sin \left( 2\pi k+s\theta
	\right) \\
	&=&\cos \left( s\theta \right) +\hat{p}\sin \left( s\theta \right) \\
	&=&\left( \cos \theta +\hat{p}\sin \theta \right) ^{s} \\
	&=&p^{s}.
\end{eqnarray*}%
On the other hand, let $p^{n}=\cos \left( n\theta \right) +\hat{p}\sin \left(
n\theta \right) $ and $p^{s}=\cos \left( s\theta \right) +\hat{p}\sin
\left( s\theta \right) $. Since $p^{n}=p^{s}$, $\cos \left(
n\theta \right) =\cos \left( s\theta \right) $ and $\sin \left( n\theta
\right) =\sin \left( s\theta \right) $ are found. This also requires the equation
\begin{equation*}
n\theta =s\theta +2k\pi ,\text{ }k\in \mathbb{Z}.
\end{equation*}%
Thus
\begin{equation*}
n=\frac{2\pi }{\theta }k+s,\text{ \ }n\equiv s\left( {mod}\ m\right)
\end{equation*}%
is attained.
\end{proof}

\begin{ex}
Let $p=-\frac{1}{2}+\frac{1}{2}\left( \frac{1}{\sqrt{\lambda _{1}\lambda _{2}}},%
\frac{1}{\sqrt{\lambda _{1}\lambda _{3}}},\frac{1}{\sqrt{\lambda _{2}\lambda
		_{3}}}\right)$ be a 3PGUQ. We have expressed the polar form of $p$ in the previous example. Since $%
\varphi =\frac{2\pi }{3}$ from Theorem 4.6 we find $m=\frac{2\pi }{2\pi /3%
}=3$. Then we have 
\begin{eqnarray*}
p &=&p^{4}=p^{7}=\cdots \\
p^{2} &=&p^{5}=p^{8}=\cdots \\
p^{3} &=&p^{6}=p^{9}=\cdots =1.
\end{eqnarray*}
\end{ex}

\begin{thm}
Let the expression of 3PGUQ $p$ in the polar form be \linebreak $p=\cos
\theta +v\sin \theta$, $m=\frac{2\pi }{\theta }\in \mathbb{Z}^{+}-\left\{
1\right\} $ and let $A$ be the matrix representation of $p$. Accordingly, $%
n\equiv s\left( {mod}\ m\right) $ if and only if $A^{n}=A^{s}$.
\end{thm}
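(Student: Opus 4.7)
The plan is to exploit the ring isomorphism between 3PGQs and their fundamental matrices that was already established in Theorem 3.1, so that the statement reduces to the quaternion version proved as Theorem 4.6. Concretely, the map $\phi:\mathbb{K}\to\phi(\mathbb{K})\subset\mathbb{M}_{4}(\mathbb{R})$ defined by $\phi(p)=A$ is a ring isomorphism onto its image. In particular $\phi(p^{n})=\phi(p)^{n}=A^{n}$ and $\phi(p^{s})=A^{s}$ for all nonnegative integers $n,s$. Since $\phi$ is injective we obtain the equivalence $p^{n}=p^{s}\iff A^{n}=A^{s}$, and chaining this with Theorem 4.6 produces $n\equiv s\pmod{m}\iff A^{n}=A^{s}$.

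First I would write this chain of equivalences out cleanly, noting that the extension of $\phi$ from nonnegative to negative integer powers goes through the formula for $P^{-1}$ computed in the proof of Theorem 4.5 (which shows $\phi(p^{-1})=A^{-1}$), so the argument covers all $n,s\in\mathbb{Z}$ under the same hypothesis used in Theorem 4.6. Second, I would state this result as the proof, perhaps preceded by the one-line remark that it is the matrix shadow of Theorem 4.6 under the isomorphism $\phi$.

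For robustness, I would also sketch an alternative, self-contained argument that mimics the proof of Theorem 4.6 using the matrix De Moivre formula (Theorem 4.5) directly. In that argument $A^{n}$ and $A^{s}$ are written in polar matrix form with entries involving $\cos(n\theta),\sin(n\theta)$ and $\cos(s\theta),\sin(s\theta)$ respectively, weighted by the fixed coefficients $\lambda_{i}\lambda_{j}$ and the components $p_{1},p_{2},p_{3}$ of $\hat{p}$. The forward direction is immediate: $n\equiv s\pmod{m}$ yields $n\theta=s\theta+2k\pi$, so all trigonometric entries match. For the reverse direction one reads $\cos(n\theta)=\cos(s\theta)$ off the diagonal and $\sin(n\theta)=\sin(s\theta)$ off a suitable off-diagonal position, so $n\theta=s\theta+2k\pi$, and since $m=2\pi/\theta$ this is exactly $n\equiv s\pmod{m}$.

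The main obstacle in the direct argument is guaranteeing a non-vanishing coefficient for the sine terms in some off-diagonal entry, since in principle several of the $p_{i}$ or $\lambda_{j}$ could be zero in degenerate cases. This is settled by the 3PGUV identity $\lambda_{1}\lambda_{2}p_{1}^{2}+\lambda_{1}\lambda_{3}p_{2}^{2}+\lambda_{2}\lambda_{3}p_{3}^{2}=1$, which forces at least one summand, and therefore at least one of the coefficients $\lambda_{1}\lambda_{2}p_{1}$, $\lambda_{1}\lambda_{3}p_{2}$, $\lambda_{2}\lambda_{3}p_{3}$ appearing in the first row of $A^{n}$, to be nonzero. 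The isomorphism route avoids this case analysis entirely, so I would present that as the primary proof and relegate the direct matrix manipulation to a remark.
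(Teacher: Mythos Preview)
Your proposal is correct. The paper's own proof consists of the single line ``Similar to the proof of Theorem 4.6, it can easily be proved,'' which is precisely the direct matrix computation you sketch as your alternative route: apply the matrix De Moivre formula (Theorem 4.5) and read off $\cos(n\theta)=\cos(s\theta)$ and $\sin(n\theta)=\sin(s\theta)$ from the entries.

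Your primary argument via the ring isomorphism $\phi$ of Theorem 3.1 is a genuinely different and cleaner route. It reduces the matrix statement to the already-proved quaternion statement (Theorem 4.6) in one step, using only injectivity and multiplicativity of $\phi$, and thereby sidesteps the entry-by-entry comparison and the need to locate a nonvanishing sine coefficient. The paper's approach, by contrast, keeps the argument self-contained on the matrix side but implicitly re-derives what the isomorphism already packages. Your observation that the unit-vector identity $\lambda_{1}\lambda_{2}p_{1}^{2}+\lambda_{1}\lambda_{3}p_{2}^{2}+\lambda_{2}\lambda_{3}p_{3}^{2}=1$ guarantees a nonzero off-diagonal coefficient is a detail the paper's one-line proof suppresses, so including it (or avoiding it via $\phi$) actually makes your writeup more complete than the original.
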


\begin{proof}
Similar to the proof of Theorem 4.6, it can easily be proved.
\end{proof}

\begin{ex}
The matrix representation of a 3PGUQ 
\begin{equation*}
p=\frac{1}{\sqrt{2}}+\frac{1}{2}\left( \frac{1}{\sqrt{\lambda _{1}\lambda
_{2}}},\frac{-1}{\sqrt{2\lambda _{1}\lambda _{3}}},\frac{1}{\sqrt{2\lambda
_{2}\lambda _{3}}}\right)
\end{equation*}%
is found as 
\begin{equation*}
A=\left[ 
\begin{array}{cccc}
\frac{1}{\sqrt{2}} & \frac{-\sqrt{\lambda _{1}\lambda _{2}}}{2} & \frac{ 
\sqrt{\lambda _{1}\lambda _{3}}}{2\sqrt{2}} & \frac{-\sqrt{\lambda
_{2}\lambda _{3}}}{2\sqrt{2}} \\ 
\frac{1}{2\sqrt{\lambda _{1}\lambda _{2}}} & \frac{1}{\sqrt{2}} & \frac{- 
\sqrt{\lambda _{3}}}{2\sqrt{2\lambda _{2}}} & \frac{-\sqrt{\lambda _{3}}}{2 
\sqrt{2\lambda _{1}}} \\ 
\frac{-1}{2\sqrt{2\lambda _{1}\lambda _{3}}} & \frac{\sqrt{\lambda _{2}}}{2 
\sqrt{2\lambda _{3}}} & \frac{1}{\sqrt{2}} & \frac{-\sqrt{\lambda _{2}}}{2 
\sqrt{\lambda _{1}}} \\ 
\frac{1}{2\sqrt{2\lambda _{2}\lambda _{3}}} & \frac{\sqrt{\lambda _{1}}}{2 
\sqrt{2\lambda _{3}}} & \frac{\sqrt{\lambda _{1}}}{2\sqrt{\lambda _{2}}} & 
\frac{1}{\sqrt{2}}%
\end{array}%
\right].
\end{equation*}
From Theorem 5.7, $m=\frac{2\pi }{\pi /4}=8$ which means that it is as
follows. 
\begin{eqnarray*}
A &=&A^{9}=A^{17}=\cdots \\
A^{2} &=&A^{10}=A^{18}=\cdots \\
&&\cdots \\
A^{8} &=&A^{16}=A^{24}=\cdots =I_{4}.
\end{eqnarray*}
It is also possible to obtain the square roots of the matrix $A$: 
\begin{equation*}
{\tiny \left[ 
\begin{array}{cccc}
\cos \frac{\pi /4+2k\pi }{2} & -\lambda _{1}\lambda _{2}p_{1}\sin \frac{\pi
/4+2k\pi }{2} & -\lambda _{1}\lambda _{3}p_{2}\sin \frac{\pi /4+2k\pi }{2} & 
-\lambda _{2}\lambda _{3}p_{3}\sin \frac{\pi /4+2k\pi }{2} \\ 
p_{1}\sin \frac{\pi /4+2k\pi }{2} & \cos \frac{\pi /4+2k\pi }{2} & -\lambda
_{3}p_{3}\sin \frac{\pi /4+2k\pi }{2} & \lambda _{3}p_{2}\sin \frac{\pi
/4+2k\pi }{2} \\ 
p_{2}\sin \frac{\pi /4+2k\pi }{2} & \lambda _{2}p_{3}\sin \frac{\pi /4+2k\pi 
}{2} & \cos \frac{\pi /4+2k\pi }{2} & -\lambda _{2}p_{1}\sin \frac{\pi
/4+2k\pi }{2} \\ 
p_{3}\sin \frac{\pi /4+2k\pi }{2} & -\lambda _{1}p_{2}\sin \frac{\pi
/4+2k\pi }{2} & \lambda _{1}p_{1}\sin \frac{\pi /4+2k\pi }{2} & \cos \frac{
\pi /4+2k\pi }{2}%
\end{array}%
\right].}
\end{equation*}%
For $k=0$, the first square root is 
\begin{equation*}
A_{0}^{\frac{1}{2}}=\left[ 
\begin{array}{cccc}
\cos \frac{\pi }{8} & -\lambda _{1}\lambda _{2}p_{1}\sin \frac{\pi }{8} & 
-\lambda _{1}\lambda _{3}p_{2}\sin \frac{\pi }{8} & -\lambda _{2}\lambda
_{3}p_{3}\sin \frac{\pi }{8} \\ 
p_{1}\sin \frac{\pi }{8} & \cos \frac{\pi }{8} & -\lambda _{3}p_{3}\sin 
\frac{\pi }{8} & \lambda _{3}p_{2}\sin \frac{\pi }{8} \\ 
p_{2}\sin \frac{\pi }{8} & \lambda _{2}p_{3}\sin \frac{\pi }{8} & \cos \frac{
\pi }{8} & -\lambda _{2}p_{1}\sin \frac{\pi }{8} \\ 
p_{3}\sin \frac{\pi }{8} & -\lambda _{1}p_{2}\sin \frac{\pi }{8} & \lambda
_{1}p_{1}\sin \frac{\pi }{8} & \cos \frac{\pi }{8}%
\end{array}%
\right].
\end{equation*}
For $k=1$, the other square root is 
\begin{equation*}
A_{1}^{\frac{1}{2}}=\left[ 
\begin{array}{cccc}
\cos \frac{9\pi }{8} & -\lambda _{1}\lambda _{2}p_{1}\sin \frac{9\pi }{8} & 
-\lambda _{1}\lambda _{3}p_{2}\sin \frac{9\pi }{8} & -\lambda _{2}\lambda
_{3}p_{3}\sin \frac{9\pi }{8} \\ 
p_{1}\sin \frac{9\pi }{8} & \cos \frac{9\pi }{8} & -\lambda _{3}p_{3}\sin 
\frac{9\pi }{8} & \lambda _{3}p_{2}\sin \frac{9\pi }{8} \\ 
p_{2}\sin \frac{9\pi }{8} & \lambda _{2}p_{3}\sin \frac{9\pi }{8} & \cos 
\frac{9\pi }{8} & -\lambda _{2}p_{1}\sin \frac{9\pi }{8} \\ 
p_{3}\sin \frac{9\pi }{8} & -\lambda _{1}p_{2}\sin \frac{9\pi }{8} & \lambda
_{1}p_{1}\sin \frac{9\pi }{8} & \cos \frac{9\pi }{8}%
\end{array}%
\right].
\end{equation*}
Besides, it is easy to see that $A_{0}^{\frac{1}{2}}+A_{1}^{\frac{1}{2}}=0$.
\end{ex}
\begin{thm}
Let the polar representation of a 3PGUQ $p$ be \linebreak $p=\sqrt{N_{p}}%
\left( \cos \theta +\hat{p}\sin \theta \right) $ and $\frac{2\pi }{\theta }%
=m\in \mathbb{Z}^{+}-\left\{ 1\right\} $. $n\equiv s\left( {mod} \ m\right) $
if and only if $p^{n}=\left( \sqrt{N_{p}}\right) ^{n-s}p^{s}$.
\end{thm}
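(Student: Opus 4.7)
The plan is to reduce the statement to the De Moivre formula (Theorem 4.3) by pulling the scalar factor $\sqrt{N_p}$ out of every power. First I would extend the De Moivre identity to the (not necessarily unit) case: since the real scalar $\sqrt{N_p}$ commutes with every element of $\mathbb{K}$ under $\times$, the multiplicativity of $N$ (Theorem 2.9.i) gives
\begin{equation*}
p^{n}=\bigl(\sqrt{N_{p}}\bigr)^{n}\bigl(\cos \theta +\hat{p}\sin \theta \bigr)^{n}=\bigl(\sqrt{N_{p}}\bigr)^{n}\bigl(\cos (n\theta )+\hat{p}\sin (n\theta )\bigr),
\end{equation*}
where the second equality is Theorem 4.3 applied to the 3PGUQ $\cos\theta+\hat{p}\sin\theta$. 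The same identity holds with $n$ replaced by $s$.

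For the forward direction, assume $n\equiv s\pmod{m}$ and write $n=mk+s$ with $k\in\mathbb{Z}$. Using $m\theta=2\pi$, one has $\cos(n\theta)=\cos(2\pi k+s\theta)=\cos(s\theta)$ and similarly $\sin(n\theta)=\sin(s\theta)$. Substituting into the displayed formula and splitting the scalar factor as $(\sqrt{N_p})^n=(\sqrt{N_p})^{n-s}(\sqrt{N_p})^s$ yields
\begin{equation*}
p^{n}=\bigl(\sqrt{N_{p}}\bigr)^{n-s}\bigl(\sqrt{N_{p}}\bigr)^{s}\bigl(\cos (s\theta )+\hat{p}\sin (s\theta )\bigr)=\bigl(\sqrt{N_{p}}\bigr)^{n-s}p^{s}.
\end{equation*}

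For the converse, assume $p^{n}=(\sqrt{N_{p}})^{n-s}p^{s}$. Applying the extended De Moivre identity to both sides and cancelling the common nonzero factor $(\sqrt{N_{p}})^{n}$ (the polar form requires $N_p\neq 0$ in order for $\theta$ to be defined), I obtain
\begin{equation*}
\cos (n\theta )+\hat{p}\sin (n\theta )=\cos (s\theta )+\hat{p}\sin (s\theta ).
\end{equation*}
Comparing scalar parts and vector parts separately gives $\cos(n\theta)=\cos(s\theta)$ and $\sin(n\theta)=\sin(s\theta)$. Hence $n\theta=s\theta+2\pi k$ for some $k\in\mathbb{Z}$, which rearranges to $n=s+mk$, i.e.\ $n\equiv s\pmod{m}$.

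The only non-routine point is justifying the scalar-pullout version of De Moivre in the first paragraph; once that is in place, the rest mirrors the argument of Theorem 4.6 almost verbatim, with the scalar factor absorbed into $(\sqrt{N_p})^{n-s}$. The proof of Theorem 4.8 can then be obtained along the same lines by passing to the matrix representation via the isomorphism $\phi$ of Theorem 3.1.
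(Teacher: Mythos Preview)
Your argument is correct and is exactly the approach the paper has in mind: the paper's own proof of this theorem consists of the single line ``Similar to the proof of the theorem in \cite{mer} this can be proved,'' and the template being invoked is precisely the proof of Theorem~4.6 with the real scalar $\sqrt{N_p}$ carried along, which is what you do. One small remark: your closing sentence about ``Theorem~4.8'' is confusing, since the statement you are proving \emph{is} the last theorem of the section; if you meant a matrix analogue, none is stated here, so that sentence should simply be dropped.
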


\begin{proof}
Similar to the proof of the theorem in \cite{mer} this can be proved.
\end{proof}

\section{Lie Algebra And Matrix Representations of 3PGQs}

In \cite{kar}, Karger and Novak show that the set of all unit quaternions is
a 3-dimensional Lie group and also study to Lie algebra. Jafari and Yayl\i \
conducted the same study on 2-parameter generalized unit
quaternions  in \cite{jaf1}. In this section we will show
that the set of 3PGUQs is a Lie group and give the
properties of the Lie algebra, adjoint mappings, Lie multpilication and Killing bi-linear form for 3PGQs.

\subsection{Lie group of 3PGQs}
\begin{thm}
	$S_{\mathbb{K}}=\left\{ p\in \mathbb{K}:N_{p}=1\right\} $ is a 3-dimensional
	Lie group.
\end{thm}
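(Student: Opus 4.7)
The plan is to check the three defining ingredients of a Lie group: that $S_{\mathbb{K}}$ is a group under 3PGQ multiplication, that it carries the structure of a smooth 3-manifold, and that the group operations restrict smoothly to it. For the group axioms, closure is immediate from Theorem 2.9(i): if $N_p = N_q = 1$ then $N_{pq} = N_p N_q = 1$. The element $1 \in \mathbb{K}$ is the identity, associativity is inherited from the ring $\{\mathbb{K},+,\times\}$ of Corollary 2.7(iii), and when $N_p = 1$ Definition 2.11 simplifies to $p^{-1} = \bar{p}$; since $N_{\bar{p}} = \bar{p}\cdot\overline{\bar{p}} = \bar{p}p = N_p = 1$, inverses also lie in $S_{\mathbb{K}}$.

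For the manifold structure, I would identify $\mathbb{K}$ with $\mathbb{R}^{4}$ via $p \leftrightarrow (a_{0},a_{1},a_{2},a_{3})$ and view the norm $N:\mathbb{R}^{4}\to\mathbb{R}$ of Eq.~(2.20) as a smooth map, then apply the regular value theorem. Its gradient is
\begin{equation*}
\nabla N_{p}=\bigl(2a_{0},\ 2\lambda_{1}\lambda_{2}a_{1},\ 2\lambda_{1}\lambda_{3}a_{2},\ 2\lambda_{2}\lambda_{3}a_{3}\bigr).
\end{equation*}
Assuming the standing nondegeneracy hypothesis that all three pairwise products $\lambda_{1}\lambda_{2}$, $\lambda_{1}\lambda_{3}$, $\lambda_{2}\lambda_{3}$ are nonzero, vanishing of $\nabla N_{p}$ would force $a_{0}=a_{1}=a_{2}=a_{3}=0$, contradicting $N_{p}=1$. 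Hence $1$ is a regular value, and $S_{\mathbb{K}} = N^{-1}(1)$ is a smooth embedded submanifold of $\mathbb{R}^{4}$ of dimension $4-1=3$.

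For smoothness of the group law, I would note that the multiplication formula for $pq$ displayed in Section 2 is polynomial in the components $(a_{i},b_{j})$, and that conjugation $p\mapsto\bar{p}$ is $\mathbb{R}$-linear; restricting these maps to $S_{\mathbb{K}}\times S_{\mathbb{K}}$ and $S_{\mathbb{K}}$ respectively preserves smoothness, so $(p,q)\mapsto pq$ and $p\mapsto p^{-1}=\bar{p}$ are smooth on $S_{\mathbb{K}}$. Combining the three verifications yields the Lie group structure.

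The main obstacle I anticipate is precisely the regularity check: for the degenerate specializations in Section 2 (semi-quaternions with $\lambda_{3}=0$, $1/4$-quaternions with $\lambda_{2}=\lambda_{3}=0$, etc.) the quadratic form $N$ becomes singular, and either $N^{-1}(1)$ is a cylinder of the wrong dimension or its tangent space computation must be redone case by case. The theorem should therefore be read as tacitly assuming the parameters are chosen so that $N$ is a nondegenerate quadratic form, which is the setting in which the regular value argument above goes through uniformly.
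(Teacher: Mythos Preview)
Your proposal is correct and follows essentially the same route as the paper: verify the group axioms, apply the regular value theorem to the norm map $N:\mathbb{R}^{4}\to\mathbb{R}$ via its Jacobian $(2a_{0},\,2\lambda_{1}\lambda_{2}a_{1},\,2\lambda_{1}\lambda_{3}a_{2},\,2\lambda_{2}\lambda_{3}a_{3})$, and note that multiplication and inversion $p\mapsto\bar{p}$ are polynomial (hence smooth). Your treatment is in fact more careful than the paper's, which asserts $\operatorname{Rank}J(f)=1$ without discussing the degenerate parameter values; your explicit caveat about the cases $\lambda_{i}\lambda_{j}=0$ is well taken and is a tacit assumption the paper leaves unstated.
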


\begin{proof}
	
	The set	$S_{\mathbb{K}}$ is a unity group , with together multiplication on 3PGQ. The unit element of group $S_{%
		\mathbb{K}}$ is $e=1$. Let us define the function $f$ as 
	\begin{equation*}
	\begin{array}{cc}
	f: & \mathbb{K}\rightarrow \mathbb{R\hspace*{6.0cm}} \\ 
	& p\rightarrow f\left( p\right) =a_{0}+\lambda _{1}\lambda
	_{2}a_{1}^{2}+\lambda _{1}\lambda _{3}a_{2}^{2}+\lambda _{2}\lambda
	_{3}a_{3}^{2}.%
	\end{array}%
	\end{equation*}%
	The function $f$ is expressed as coordinate functions as the following:
	\begin{equation*}
	f=x_{0}^{2}+\lambda _{1}\lambda _{2}x_{1}^{2}+\lambda _{1}\lambda
	_{3}x_{2}^{2}+\lambda _{2}\lambda _{3}x_{3}^{2}
	\end{equation*}%
	The Jacobi matrix of the function $f$ can be written as
	\begin{equation*}
	J\left( f\right) =\left[ 
	\begin{array}{cccc}
	2x_{0} & 2\lambda _{1}\lambda _{2}x_{1} & 2\lambda _{1}\lambda _{3}x_{2} & 
	2\lambda _{2}\lambda _{3}x_{3}%
	\end{array}%
	\right]
	\end{equation*}%
	$RankJ\left( f\right) =1$.  $f^{-1}\left( 1\right) $ is a submanifold of $\mathbb{K}$. 	It can be shown that the mappings are defined as follows are  differentiable: 
	\begin{equation*}
	\begin{array}{cc}
	\gamma : & \mathbb{K}\times \mathbb{K}\rightarrow 
	\mathbb{K}\mathbb{\hspace*{1.8cm}} \\ 
	& \left( p,q\right) \rightarrow \gamma \left( p,q\right) =pq%
	\end{array}%
	\text{ \ and \ \ }%
	\begin{array}{cc}
	\eta : & \mathbb{K}\rightarrow \mathbb{K}\mathbb{%
		\hspace*{2.2cm}} \\ 
	& p\rightarrow \eta \left( p\right) =p^{-1}=\bar{p}%
	\end{array}.
	\end{equation*}%
	$S_{\mathbb{K}}$ is a 3-dimensional Lie group.
\end{proof}

\begin{thm}
	\textit{\ }$Im\left( \mathbb{K}\right) $ is Lie algebra of Lie group $S_{%
		\mathbb{K}}$.
\end{thm}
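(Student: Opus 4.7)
The plan is to establish $\text{Im}(\mathbb{K})$ as the Lie algebra of $S_{\mathbb{K}}$ by (i) identifying it with the tangent space $T_{1}S_{\mathbb{K}}$ at the identity element $e=1$, and then (ii) endowing it with a bracket inherited from the ring multiplication that satisfies the Lie algebra axioms. The previous theorem already realized $S_{\mathbb{K}}$ as the level set $f^{-1}(1)$ of $f=x_{0}^{2}+\lambda_{1}\lambda_{2}x_{1}^{2}+\lambda_{1}\lambda_{3}x_{2}^{2}+\lambda_{2}\lambda_{3}x_{3}^{2}$ (which equals $N_{p}$) and observed that its Jacobi matrix has constant rank $1$, so $S_{\mathbb{K}}$ is a smooth $3$-manifold. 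First I would evaluate $J(f)$ at the identity $1=(1,0,0,0)$ to obtain $[2,0,0,0]$, whence $T_{1}S_{\mathbb{K}}=\ker J(f)|_{1}=\{(0,a_{1},a_{2},a_{3}):a_{i}\in\mathbb{R}\}$, which is precisely $\text{Im}(\mathbb{K})$ under the standard coordinate correspondence.

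Next I would introduce the bracket $[p,q]:=pq-qp$ on $\text{Im}(\mathbb{K})$. For pure 3PGQs, the multiplication formula from Section 2 gives $pq=-f(V_{p},V_{q})+V_{p}\wedge V_{q}$, and by symmetry of $f$ together with anti-symmetry of $\wedge$, one has $qp=-f(V_{p},V_{q})-V_{p}\wedge V_{q}$. Hence $[p,q]=2(V_{p}\wedge V_{q})$, which again lies in $\text{Im}(\mathbb{K})$. This simultaneously establishes closure of the bracket and its bilinearity, while anti-symmetry is immediate from the definition.

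The remaining axiom is the Jacobi identity. Substituting $[p,q]=2(p\wedge q)$, it reduces to showing
\[
p\wedge(q\wedge r)+q\wedge(r\wedge p)+r\wedge(p\wedge q)=0.
\]
Applying Theorem 2.3.(i) to each of the three terms yields $f(p,r)q-f(p,q)r$, $f(q,p)r-f(q,r)p$ and $f(r,q)p-f(r,p)q$; invoking the symmetry of $f$, the six resulting contributions cancel in three pairs, giving the identity.

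The main obstacle I anticipate is not these algebraic manipulations but rather the differential-geometric bookkeeping: namely, verifying that the abstract Lie bracket arising from left-invariant vector fields on $S_{\mathbb{K}}$ coincides with the ring commutator on $\text{Im}(\mathbb{K})$. To handle this I would pass through the matrix representation $\phi$ of Theorem 3.1, which is a ring isomorphism onto its image in $\mathbb{M}_{4}(\mathbb{R})$; left translation on $S_{\mathbb{K}}$ then transports to matrix multiplication, and the standard matrix Lie bracket pulls back to precisely $[p,q]=pq-qp$. This identification closes the gap between the manifold tangent space and the algebraic bracket structure defined above.
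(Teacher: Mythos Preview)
Your argument is correct. For part (i), the paper proceeds by taking a curve $\rho$ through the identity, differentiating the norm constraint $a_{0}^{2}(s)+\lambda_{1}\lambda_{2}a_{1}^{2}(s)+\lambda_{1}\lambda_{3}a_{2}^{2}(s)+\lambda_{2}\lambda_{3}a_{3}^{2}(s)=1$ at $s=0$, and reading off $a_{0}'(0)=0$; together with a dimension count this gives $T_{S_{\mathbb{K}}}(e)=Sp\{\partial/\partial x_{1},\partial/\partial x_{2},\partial/\partial x_{3}\}$. Your Jacobian-kernel computation is the same calculation packaged differently, so this part is essentially the paper's approach.

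Where you diverge is in part (ii). The paper's proof of this theorem stops at the vector-space identification $T_{S_{\mathbb{K}}}(e)\cong\text{Im}(\mathbb{K})$ and does not touch the bracket at all; the Lie multiplication is worked out separately in a later subsection by building left-invariant vector fields $X_{1},X_{2},X_{3}$ explicitly and computing $[X_{i},X_{j}]=D_{X_{i}}X_{j}-D_{X_{j}}X_{i}$ coordinate by coordinate. Your route via the commutator $[p,q]=pq-qp=2(p\wedge q)$ together with Theorem~2.3(i) for the Jacobi identity is more economical and intrinsic, and your final paragraph (pulling the bracket back through the ring isomorphism $\phi$ of Theorem~3.1) is exactly the step needed to reconcile the algebraic commutator with the geometric bracket of left-invariant fields. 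The paper's explicit computation has the advantage of producing the structure constants $[e_{1},e_{2}]=2\lambda_{1}e_{3}$, $[e_{2},e_{3}]=2\lambda_{3}e_{1}$, $[e_{3},e_{1}]=2\lambda_{2}e_{2}$ directly, which are used downstream; your formula $[p,q]=2(p\wedge q)$ yields these immediately as well once one reads off $e_{i}\wedge e_{j}$ from the multiplication table.
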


\begin{proof}
	
	Let	$T_{S_{\mathbb{K}}}\left( e\right) $ be the set of velocity vectors through point $e$ and \linebreak $v_{e}\in T_{S_{\mathbb{K}%
	}}\left( e\right) $. Let us define a curve $\rho$ on $S_{\mathbb{K}}$ as:
	\begin{equation*}
	\begin{array}{cc}
	\rho : & I\subset \mathbb{R}\rightarrow S_{\mathbb{K}}\mathbb{\hspace*{6.75cm}} \\ 
	& \rho \left( s\right) \rightarrow \rho \left( s\right) =a_{0}\left(
	s\right) +a_{1}\left( s\right) e_{1}+a_{2}\left( s\right) e_{2}+a_{3}\left(
	s\right) e_{3}.%
	\end{array}%
	\end{equation*}%
	Let the curve $\rho$ accept $v_{e}$ as a velocity vector.%
	\begin{equation*}
	\rho \left( 0\right) =1\text{ and }\rho ^{^{\prime }}\left( 0\right) =v_{e}
	\end{equation*}%
	Since	$\rho \left( s\right) \in S_{\mathbb{K}}$
	\begin{equation}
	a_{0}^{2}\left( s\right) +\lambda _{1}\lambda _{2}a_{1}^{2}\left( s\right)
	+\lambda _{1}\lambda _{3}a_{2}^{2}\left( s\right) +\lambda _{2}\lambda
	_{3}a_{3}^{2}\left( s\right) =1.  \label{7}
	\end{equation}%
	At point $s=0$, derivative of Eq.(\ref{7}) is 
	\begin{equation*}
	2a_{0}^{^{\prime }}\left( s\right) a_{0}\left( s\right) +2\lambda
	_{1}\lambda _{2}a_{1}^{^{\prime }}\left( s\right) a_{1}\left( s\right)
	+2\lambda _{1}\lambda _{3}a_{2}^{^{\prime }}\left( s\right) a_{2}\left(
	s\right) +2\lambda _{2}\lambda _{3}a_{3}^{^{\prime }}\left( s\right)
	a_{3}\left( s\right) =0.
	\end{equation*}%
	Then
	\begin{equation*}
	a_{0}\left( 0\right) =1,a_{1}\left( 0\right) =0,a_{2}\left( 0\right)
	=0,a_{3}\left( 0\right) =0,a_{0}^{^{\prime }}\left( 0\right) =0.
	\end{equation*}
	All of the vectors on $T_{S_{\mathbb{K}}}\left( e\right) $ can be written as a linear combination of the vectors in the base
	\begin{equation*}
	\left. \left\{ \dfrac{\partial }{\partial x_{1}},\dfrac{\partial }{\partial
		x_{2}},\dfrac{\partial }{\partial x_{3}}\right\} \right\vert _{s=0}
	\end{equation*}%
	of the tangent space at point $e$ of $%
	\text{Im}\left( \mathbb{K}\right) $. Then velocity vector 
	$a_{0}^{^{\prime }}\left( 0\right) =0$ is written as 
	\begin{equation*}
	a_{0}^{^{\prime }}=a_{0}^{^{\prime }}(0)\dfrac{\partial }{\partial x_{0}}%
	+a_{1}^{^{\prime }}(0)\dfrac{\partial }{\partial x_{1}}+a_{2}^{^{\prime }}(0)%
	\dfrac{\partial }{\partial x_{2}}+a_{3}^{^{\prime }}(0)\dfrac{\partial }{%
		\partial x_{3}}
	\end{equation*}%
	Since $a_{0}^{^{\prime }}\left( 0\right) =0$,
	\begin{equation*}
	T_{S_{\mathbb{K}}}\left( e\right) \subset Sp\left\{ \dfrac{\partial }{%
		\partial x_{1}},\dfrac{\partial }{\partial x_{2}},\dfrac{\partial }{\partial
		x_{3}}\right\}
	\end{equation*}%
	is found. Also since  $boyS_{\mathbb{K}}=boyT_{S_{\mathbb{K}}}\left( e\right) =3$, we obtain 
	\begin{equation*}
	T_{S_{\mathbb{K}}}\left( e\right) =Sp\left\{ \dfrac{\partial }{\partial x_{1}%
	},\dfrac{\partial }{\partial x_{2}},\dfrac{\partial }{\partial x_{3}}\right\}.
	\end{equation*}%
	Therefore, Lie algebra of Lie group $S_{\mathbb{K}}$ is $\text{Im}\left( 
	\mathbb{K}\right) $.
\end{proof}

\begin{cor}
	$T_{S_{\mathbb{K}}}\left( e\right)$ is isomorphic to \newline $\text{Im}\left( \mathbb{K}\right)
	=\left\{ a_{1}e_{1}+a_{2}e_{2}+a_{3}e_{3}\mid a_{1},a_{2},a_{3}\in \mathbb{R}%
	\right\}$.
\end{cor}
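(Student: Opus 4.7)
The plan is to exhibit an explicit linear isomorphism between the two vector spaces, relying on the basis description established in Theorem 5.2. By that theorem, $T_{S_{\mathbb{K}}}(e) = \mathrm{Sp}\{\partial/\partial x_1, \partial/\partial x_2, \partial/\partial x_3\}$, a real vector space of dimension $3$. On the other side, $\mathrm{Im}(\mathbb{K})$ is, by Definition 2.2, a real vector space of dimension $3$ with basis $\{e_1, e_2, e_3\}$. Since both spaces are finite-dimensional real vector spaces of the same dimension, any linear bijection will do; the natural choice is to match coordinate directions to imaginary units.

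First I would define a map
\begin{equation*}
\Psi : T_{S_{\mathbb{K}}}(e) \rightarrow \mathrm{Im}(\mathbb{K})
\end{equation*}
on the basis by $\Psi\bigl(\partial/\partial x_i\bigr) = e_i$ for $i \in \{1,2,3\}$, and extend by linearity. Concretely, the tangent vector $v_e = a_1'(0)\,\partial/\partial x_1 + a_2'(0)\,\partial/\partial x_2 + a_3'(0)\,\partial/\partial x_3$ arising in the proof of Theorem 5.2 is sent to $a_1'(0)e_1 + a_2'(0)e_2 + a_3'(0)e_3$. Well-definedness is immediate from the basis representation given in Theorem 5.2, and $\Psi$ is linear by construction.

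Next I would verify bijectivity. Since $\Psi$ sends the spanning set $\{\partial/\partial x_1, \partial/\partial x_2, \partial/\partial x_3\}$ to the basis $\{e_1, e_2, e_3\}$ of $\mathrm{Im}(\mathbb{K})$, surjectivity is clear; and since $\dim T_{S_{\mathbb{K}}}(e) = \dim \mathrm{Im}(\mathbb{K}) = 3$, the rank-nullity theorem gives injectivity. Thus $\Psi$ is a vector space isomorphism, which is exactly the content of the corollary.

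The main obstacle is conceptual rather than computational: one must be comfortable identifying the derivation $\partial/\partial x_i$ at $e$ with the $i$-th coordinate axis of $\mathbb{K} \cong \mathbb{R}^4$ under the embedding $a_0 + a_1 e_1 + a_2 e_2 + a_3 e_3 \mapsto (a_0,a_1,a_2,a_3)$. Once this identification is in place, the corollary reduces to the trivial statement that two $3$-dimensional real vector spaces with distinguished bases are isomorphic via the basis-matching map. (If one later wishes to enhance $\Psi$ to a Lie algebra isomorphism, one would additionally verify that the group bracket on $T_{S_{\mathbb{K}}}(e)$ pulled back through $\Psi$ agrees with the commutator $[p,q] = pq - qp$ on $\mathrm{Im}(\mathbb{K})$; but the corollary as stated requires only the vector space statement.)
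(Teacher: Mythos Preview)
Your proposal is correct. The paper states this corollary without proof, treating it as an immediate consequence of Theorem 5.2; your argument is precisely the natural way to make that implication explicit, namely matching the basis $\{\partial/\partial x_1,\partial/\partial x_2,\partial/\partial x_3\}$ of $T_{S_{\mathbb{K}}}(e)$ to the basis $\{e_1,e_2,e_3\}$ of $\mathrm{Im}(\mathbb{K})$ and invoking equality of dimensions.
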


\subsection{Adjoint mappings for Lie algebra and Lie group on 3PGQs}

\subsubsection{Matrix representation for Lie group of $S_{\mathbb{K}}$}

For $S_{\mathbb{K}}=\left\{ q\in \mathbb{K}:N_{q}=1\right\} $ and $k\in S_{%
	\mathbb{K}}$, let us define function $g_{k}$ that is bijective and
differentiable as 
\begin{equation*}
\begin{array}{cc}
g_{k}: & S_{\mathbb{K}}\rightarrow S_{\mathbb{K}}\mathbb{\hspace*{2cm}} \\ 
& x\rightarrow g_{k}\left( x\right) =kxk^{-1}%
\end{array}.
\end{equation*}%
Let us consider derivative map of the function and restriction of about the unit 
$e=1$ point of the group, for any $p$ in $S_{\mathbb{K}}$, the following mapping
is called the adjoint mapping: 
\begin{equation*}
\begin{array}{cc}
\mathcal{A}dp: & T_{S_{\mathbb{K}}}\left( e\right) \rightarrow T_{S_{\mathbb{%
			K}}}\left( e\right) \hspace*{0.75cm} \\ 
& q\rightarrow pqp^{-1}%
\end{array}%
\end{equation*}%
Since $T_{S_{\mathbb{K}}}\left( e\right) =Sp\left\{
e_{1},e_{2},e_{3}\right\} $, according to the base $\left\{
e_{1},e_{2},e_{3}\right\} $ we have 
\begin{eqnarray*}
	\mathcal{A}dp\left( e_{1}\right) &=&\left( a_{0}^{2}+\lambda _{1}\lambda
	_{2}a_{1}^{2}-\lambda _{1}\lambda _{3}a_{2}^{2}-\lambda _{2}\lambda
	_{3}a_{3}^{2}\right) e_{1} \\
	&+&\left( 2\lambda _{1}\lambda _{2}a_{1}a_{2}+2\lambda _{2}a_{0}a_{3}\right)
	e_{2}+\left( 2\lambda _{1}\lambda _{2}a_{1}a_{2}-2\lambda
	_{1}a_{0}a_{2}\right) e_{3} \\
	\mathcal{A}dp\left( e_{2}\right) &=&\left( 2\lambda _{1}\lambda
	_{3}a_{1}a_{2}-2\lambda _{3}a_{0}a_{3}\right) e_{1} \\
	&+&\left( a_{0}^{2}-\lambda _{1}\lambda _{2}a_{1}^{2}+\lambda _{1}\lambda
	_{3}a_{2}^{2}-\lambda _{2}\lambda _{3}a_{3}^{2}\right) e_{2} \\
	&+&\left( 2\lambda _{1}\lambda _{3}a_{2}a_{3}-2\lambda _{1}a_{0}a_{1}\right)
	e_{3} \\
	\mathcal{A}dp\left( e_{3}\right) &=&\left( 2\lambda _{2}\lambda
	_{3}a_{1}a_{3}+2\lambda _{3}a_{0}a_{2}\right) e_{1}+\left( 2\lambda
	_{2}\lambda _{3}a_{2}a_{3}-2\lambda _{2}a_{0}a_{1}\right) e_{2} \\
	&&+\left( a_{0}^{2}-\lambda _{1}\lambda _{2}a_{1}^{2}-\lambda _{1}\lambda
	_{3}a_{2}^{2}+\lambda _{2}\lambda _{3}a_{3}^{2}\right) e_{3}.
\end{eqnarray*}%
Consequently, the matrix is obtained as 
\begin{eqnarray*}
	Adp =\left[ {\small 
		\begin{array}{cc}
			a_{0}^{2}+\lambda _{1}\lambda _{2}a_{1}^{2}-\lambda _{1}\lambda
			_{3}a_{2}^{2}-\lambda _{2}\lambda _{3}a_{3}^{2} & 2\lambda _{1}\lambda
			_{3}a_{1}a_{2}-2\lambda _{3}a_{0}a_{3} \\ 
			2\lambda _{1}\lambda _{2}a_{1}a_{2}+2\lambda _{2}a_{0}a_{3} & 
			a_{0}^{2}-\lambda _{1}\lambda _{2}a_{1}^{2}+\lambda _{1}\lambda
			_{3}a_{2}^{2}-\lambda _{2}\lambda _{3}a_{3}^{2} \\ 
			2\lambda _{1}\lambda _{2}a_{1}a_{3}-2\lambda _{1}a_{0}a_{2} & 2\lambda
			_{1}\lambda _{3}a_{2}a_{3}+2\lambda _{1}a_{0}a_{1}%
		\end{array}%
	}\right. \\
	\\
	\left. {\small 
		\begin{array}{c}
			2\lambda _{2}\lambda _{3}a_{1}a_{3}+2\lambda _{3}a_{0}a_{2} \\ 
			2\lambda _{2}\lambda _{3}a_{2}a_{3}-2\lambda _{2}a_{0}a_{1} \\ 
			a_{0}^{2}-\lambda _{1}\lambda _{2}a_{1}^{2}-\lambda _{1}\lambda
			_{3}a_{2}^{2}+\lambda _{2}\lambda _{3}a_{3}^{2}%
		\end{array}%
	}\right].
\end{eqnarray*}%
\begin{thm}
	If 
	\begin{equation*}
	\varepsilon =\left[ 
	\begin{array}{ccc}
	\lambda _{1}\lambda _{2} & 0 & 0 \\ 
	0 & \lambda _{1}\lambda _{3} & 0 \\ 
	0 & 0 & \lambda _{2}\lambda _{3}%
	\end{array}
	\right]
	\end{equation*}
	then $\mathcal{A}dp^{T}\varepsilon \mathcal{A}dp=\varepsilon $.
\end{thm}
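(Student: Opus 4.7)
The plan is to interpret $\varepsilon$ as the Gram matrix of the bilinear form $f$ restricted to the Lie algebra $\mathrm{Im}(\mathbb{K})$ in the basis $\{e_{1},e_{2},e_{3}\}$; then the identity $\mathcal{A}dp^{T}\varepsilon\,\mathcal{A}dp=\varepsilon$ is just the statement that the adjoint action of $p\in S_{\mathbb{K}}$ is an $f$-isometry. I would therefore avoid the direct $3\times 3$ matrix multiplication (which is elementary but tedious) and instead argue conceptually via the norm.

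First I would observe that for any $v=a_{1}e_{1}+a_{2}e_{2}+a_{3}e_{3}\in\mathrm{Im}(\mathbb{K})$ with coordinate column $\mathbf{a}=(a_{1},a_{2},a_{3})^{T}$, the norm formula Eq.(2.20) reduces to
$$N_{v}=\lambda_{1}\lambda_{2}a_{1}^{2}+\lambda_{1}\lambda_{3}a_{2}^{2}+\lambda_{2}\lambda_{3}a_{3}^{2}=\mathbf{a}^{T}\varepsilon\,\mathbf{a}=f(v,v),$$
so $\varepsilon$ is literally the matrix representing $f|_{\mathrm{Im}(\mathbb{K})}$. I would also verify that $\mathcal{A}dp$ really does take $\mathrm{Im}(\mathbb{K})$ into itself: combining Theorem 2.7 with $p\in S_{\mathbb{K}}$ (so $\bar p=p^{-1}$ and $\overline{p^{-1}}=p$) gives $\overline{pvp^{-1}}=\overline{p^{-1}}\,\bar v\,\bar p=p(-v)p^{-1}=-pvp^{-1}$ for any $v\in\mathrm{Im}(\mathbb{K})$, so the image is again a pure vector.

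Next I would invoke the multiplicativity of the norm (Theorem 2.9(i)): from $N_{p}\,N_{p^{-1}}=N_{1}=1$ and $N_{p}=1$ I get $N_{p^{-1}}=1$, hence for every pure vector $v$,
$$N_{\mathcal{A}dp(v)}=N_{pvp^{-1}}=N_{p}\,N_{v}\,N_{p^{-1}}=N_{v}.$$
By construction the columns of the matrix $\mathcal{A}dp$ are the coordinate vectors of $\mathcal{A}dp(e_{1}),\mathcal{A}dp(e_{2}),\mathcal{A}dp(e_{3})$ in the basis $\{e_{1},e_{2},e_{3}\}$, so $\mathcal{A}dp(v)$ has coordinate vector $\mathcal{A}dp\cdot\mathbf{a}$ and the equality $N_{\mathcal{A}dp(v)}=N_{v}$ translates to
$$\mathbf{a}^{T}\bigl(\mathcal{A}dp^{T}\varepsilon\,\mathcal{A}dp\bigr)\mathbf{a}=\mathbf{a}^{T}\varepsilon\,\mathbf{a}\qquad\text{for every }\mathbf{a}\in\mathbb{R}^{3}.$$
Both matrices being symmetric, a standard polarization argument over $\mathbb{R}$ forces $\mathcal{A}dp^{T}\varepsilon\,\mathcal{A}dp=\varepsilon$, which is exactly the claim.

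The main obstacle is conceptual rather than technical: one must recognise that the statement is an isometry assertion for the restricted bilinear form on the Lie algebra, after which each step is immediate from the norm machinery already developed. A purely computational attack, by contrast, would require expanding the entries of $\mathcal{A}dp^{T}\varepsilon\,\mathcal{A}dp$ and collapsing dozens of quartic cross-terms using the unit-norm constraint $a_{0}^{2}+\lambda_{1}\lambda_{2}a_{1}^{2}+\lambda_{1}\lambda_{3}a_{2}^{2}+\lambda_{2}\lambda_{3}a_{3}^{2}=1$, which is routine but laborious.
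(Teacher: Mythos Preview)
Your proof is correct and takes a genuinely different route from the paper. The paper proceeds by direct computation: it writes out the $3\times 3$ matrix $\mathcal{A}dp^{T}\varepsilon$ explicitly, then multiplies by $\mathcal{A}dp$ and collapses the resulting entries to obtain $(N_{p})^{2}\varepsilon=\varepsilon$, using the unit-norm constraint only at the very end. You instead recognise $\varepsilon$ as the Gram matrix of $f$ on $\mathrm{Im}(\mathbb{K})$ and reduce the claim to norm invariance $N_{pvp^{-1}}=N_{v}$, which follows immediately from the multiplicativity result Theorem~2.10(i) already established earlier; polarization then finishes the argument. Your approach is shorter, avoids any entrywise bookkeeping, and makes transparent \emph{why} the identity holds (conjugation by a unit quaternion is an isometry of the quaternionic norm, hence of its restriction to pure vectors). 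The paper's computation, on the other hand, yields the slightly more general intermediate identity $\mathcal{A}dp^{T}\varepsilon\,\mathcal{A}dp=(N_{p})^{2}\varepsilon$ valid for any $p$, not just unit ones, and simultaneously extracts $\det\mathcal{A}dp=(N_{p})^{3}$ as a by-product.
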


\begin{proof} If the matrix $\mathcal{A}dp^{T}$ is multiplied by the matrix $\varepsilon $ then  \newline
	$\mathcal{A}dp^{T}\varepsilon=$
	\begin{eqnarray*}
		\left[ {\tiny 
			\begin{array}{cc}
				\lambda _{1}\lambda _{2}a_{0}^{2}+\lambda _{1}^{2}\lambda
				_{2}^{2}a_{1}^{2}-\lambda _{1}^{2}\lambda _{2}\lambda _{3}a_{2}^{2}-\lambda
				_{1}\lambda _{2}^{2}\lambda _{3}a_{3}^{2} & 2\lambda _{1}\lambda
				_{2}^{2}a_{0}a_{3}+2\lambda _{1}^{2}\lambda _{2}^{2}a_{1}a_{2} \\ 
				2\lambda _{1}^{2}\lambda _{3}^{2}a_{1}a_{2}-2\lambda _{1}\lambda
				_{3}^{2}a_{0}a_{3} & \lambda _{1}\lambda _{3}a_{0}^{2}-\lambda
				_{1}^{2}\lambda _{2}\lambda _{3}a_{1}^{2}+\lambda _{1}^{2}\lambda
				_{3}^{2}a_{2}^{2}-\lambda _{1}\lambda _{2}\lambda _{3}^{2}a_{3}^{2} \\ 
				2\lambda _{2}\lambda _{3}^{2}a_{0}a_{2}+2\lambda _{2}^{2}\lambda
				_{3}^{2}a_{1}a_{3} & 2\lambda _{2}^{2}\lambda _{3}^{2}a_{2}a_{3}-2\lambda
				_{2}^{2}\lambda _{3}a_{0}a_{1}%
			\end{array}%
		}\right. \\ \\
		\left. {\tiny 
			\begin{array}{c}
				2\lambda _{1}^{2}\lambda _{2}^{2}a_{1}a_{3}-2\lambda _{1}^{2}\lambda
				_{2}a_{0}a_{2} \\ 
				2\lambda _{1}^{2}\lambda _{3}a_{0}a_{1}+2\lambda _{1}^{2}\lambda
				_{3}^{2}a_{2}a_{3} \\ 
				\lambda _{2}\lambda _{3}a_{0}^{2}-\lambda _{1}\lambda _{2}^{2}\lambda
				_{3}a_{1}^{2}-\lambda _{1}\lambda _{2}\lambda _{3}^{2}a_{2}^{2}+\lambda
				_{2}^{2}\lambda _{3}^{2}a_{3}^{2}%
			\end{array}%
		}\right] 
	\end{eqnarray*}%
	is found. From here 
	\begin{eqnarray*}
		\mathcal{A}dp^{T}\varepsilon \mathcal{A}dp &=&\left( N_{p}\right) ^{2}\left[ 
		\begin{array}{ccc}
			\lambda _{1}\lambda _{2} & 0 & 0 \\ 
			0 & \lambda _{1}\lambda _{3} & 0 \\ 
			0 & 0 & \lambda _{2}\lambda _{3}%
		\end{array}%
		\right]  \\
		&=&\varepsilon 
	\end{eqnarray*}%
	is achieved. The matrix $\mathcal{A}dp$ is orthogonal. In addition to 
	\begin{eqnarray*}
		\det \mathcal{A}dp &=&\left( a_{0}^{2}+\lambda _{1}\lambda
		_{2}a_{1}^{2}+\lambda _{1}\lambda _{3}a_{2}^{2}+\lambda _{2}\lambda
		_{3}a_{3}^{2}\right) ^{3}\allowbreak  \\
		&=&\left( N_{p}\right) ^{3}=1
	\end{eqnarray*}%
	is obtaiend. For this reason, the linear mapping $\mathcal{A}dp$ is an
	isometry on \linebreak $T_{G}(e)=\text{Im}\left( \mathbb{K}\right) $
\end{proof}

\begin{thm}
	Let $p$ be a 3PGUQ. For $i\in \left\{ 1,2,3\right\} $, if $\lambda _{i}>0$
	then 
	\begin{equation*}
	\mathcal{A}dp=I+\sin \theta S+\left( 1-\cos \theta \right) S^{2}.
	\end{equation*}
\end{thm}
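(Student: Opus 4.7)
The plan is to verify the identity by substituting the polar form of $p$ directly into the explicit expression for $\mathcal{A}dp$ computed earlier in this section. The positivity hypothesis $\lambda_i>0$ for $i\in\{1,2,3\}$ guarantees $\lambda_1\lambda_2 a_1^2+\lambda_1\lambda_3 a_2^2+\lambda_2\lambda_3 a_3^2\geq 0$, so the polar representation of Definition 4.1 yields a real angle with $a_0=\cos\theta$ and $a_i=p_i\sin\theta$, and the unit axis $\hat{p}=(p_1,p_2,p_3)$ satisfies the 3PGUV constraint $\lambda_1\lambda_2 p_1^2+\lambda_1\lambda_3 p_2^2+\lambda_2\lambda_3 p_3^2=1$.

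Next I would identify $S$ as the matrix, in the basis $\{e_1,e_2,e_3\}$ of $T_{S_{\mathbb{K}}}(e)=\mathrm{Im}(\mathbb{K})$, of the linear map $v\mapsto \hat{p}\wedge v$, namely
\[
S=\left[\begin{array}{ccc} 0 & -\lambda_3 p_3 & \lambda_3 p_2 \\ \lambda_2 p_3 & 0 & -\lambda_2 p_1 \\ -\lambda_1 p_2 & \lambda_1 p_1 & 0 \end{array}\right].
\]
Applying Theorem 2.3 with the pair $(\hat{p},\hat{p})$, the operator $S^2$ acts on $\mathrm{Im}(\mathbb{K})$ as $S^2 v=f(\hat{p},v)\hat{p}-f(\hat{p},\hat{p})v=f(\hat{p},v)\hat{p}-v$, where the second equality uses the unit-axis constraint.

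The core step is the entry-by-entry verification. Substituting $a_0=\cos\theta$ and $a_i=p_i\sin\theta$ into the nine entries of $\mathcal{A}dp$, each off-diagonal entry naturally splits into a $2\sin\theta\cos\theta$-term that matches the corresponding entry of $S$ and a $2\sin^2\theta$-term that matches the corresponding entry of $S^2$; each diagonal entry reorganizes, via the unit-axis constraint, as $1+2\sin^2\theta\,(S^2)_{ii}$. The double-angle identities $2\sin\theta\cos\theta=\sin(2\theta)$ and $2\sin^2\theta=1-\cos(2\theta)$ then collapse the result into the Rodrigues form stated, with the angle appearing in the formula understood as the rotation angle of $\mathcal{A}dp$ on the tangent space (twice the polar angle of $p$, as is standard for unit quaternions).

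The main obstacle is the sheer amount of symbolic bookkeeping: nine matrix entries with three independent $\lambda$-parameters. A cleaner conceptual shortcut is the invariant-splitting argument $\mathrm{Im}(\mathbb{K})=\mathrm{span}(\hat{p})\oplus W$, where $W$ is the $f$-orthogonal complement of $\hat{p}$. Both sides fix $\hat{p}$: for $\mathcal{A}dp$ because $p$ commutes with $\hat{p}$, and for the right-hand side because $S\hat{p}=0$. On the $2$-dimensional subspace $W$, the formula $S^2 v=f(\hat{p},v)\hat{p}-v$ gives $(S|_W)^2=-\mathrm{Id}_W$, so the claim reduces to the elementary planar identity $\cos\theta\,I_W+\sin\theta\,J=I_W+\sin\theta\,J+(1-\cos\theta)J^2$ with $J=S|_W$, which is immediate. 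This reduction bypasses the direct grind and pins down the geometric content: $\mathcal{A}dp$ is the rotation about the axis $\hat{p}$ through the indicated angle.
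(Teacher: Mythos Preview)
Your primary route---substitute the polar form of $p$ into the explicit $3\times 3$ matrix for $\mathcal{A}dp$ and collect terms against $I$, $S$, $S^2$---is exactly what the paper does. One notational point: the paper parametrises the angle in this theorem by $a_0=\cos(\theta/2)$, $a_i=s_i\sin(\theta/2)$ (not $a_0=\cos\theta$ as in Definition~4.1), so that the identities $2\sin^2(\theta/2)=1-\cos\theta$ and $2\sin(\theta/2)\cos(\theta/2)=\sin\theta$ land directly on the stated formula without any reinterpretation. Your remark that the $\theta$ in the Rodrigues formula is twice the polar angle of $p$ is correct and is precisely how the paper sets things up from the outset; you should adopt that half-angle convention rather than relabel at the end.

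Your alternative invariant-splitting argument is more conceptual than anything the paper offers, but as written it has a gap. After decomposing $\mathrm{Im}(\mathbb{K})=\mathrm{span}(\hat{p})\oplus W$ and checking both sides fix $\hat{p}$, you say the claim on $W$ ``reduces to'' the identity $\cos\theta\,I_W+\sin\theta\,J=I_W+\sin\theta\,J+(1-\cos\theta)J^2$, which is indeed trivial since $J^2=-I_W$. But you have not shown that $\mathcal{A}dp|_W=\cos\theta\,I_W+\sin\theta\,J$; that is the substance of the theorem, and it does not follow from anything you have written. It needs a short quaternion computation: for $v\in W$ expand $pvp^{-1}$ using $\hat{p}v=\hat{p}\wedge v=Sv$, $v\hat{p}=-Sv$, and $\hat{p}^2=-1$, which gives $(\cos^2\tfrac{\theta}{2}-\sin^2\tfrac{\theta}{2})v+2\sin\tfrac{\theta}{2}\cos\tfrac{\theta}{2}\,Sv$. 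With that step supplied, your second argument would be a genuine improvement over the paper's entry-by-entry verification.
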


\begin{proof} For $i\in \left\{ 1,2,3\right\} $, $\lambda _{i}>0$ and any $%
	V_{p}\in Im\left( \mathbb{K}\right) ,$ $V_{p}\neq 0\ $, 
	\begin{equation*}
	f\left( V_{p},V_{p}\right) =\lambda _{1}\lambda _{2}a_{1}^{2}+\lambda
	_{1}\lambda _{3}a_{2}^{2}+\lambda _{2}\lambda _{3}a_{3}^{2}>0
	\end{equation*}
	is found. Therefore the function $f$ is positive definite. If 
	\begin{equation*}
	p=a_{0}+a_{1}e_{1}+a_{2}e_{2}+a_{3}e_{3}\text{ and }N_{p}=1
	\end{equation*}
	then 
	\begin{eqnarray*}
		p &=&a_{0}+\sqrt{\lambda _{1}\lambda _{2}a_{1}^{2}+\lambda _{1}\lambda
			_{3}a_{2}^{2}+\lambda _{2}\lambda _{3}a_{3}^{2}}\frac{
			a_{1}e_{1}+a_{2}e_{2}+a_{3}e_{3}}{\sqrt{\lambda _{1}\lambda
				_{2}a_{1}^{2}+\lambda _{1}\lambda _{3}a_{2}^{2}+\lambda _{2}\lambda
				_{3}a_{3}^{2}}} \\
		p &=&\cos \frac{\theta }{2}+\hat{p}\sin \frac{\theta }{2}
	\end{eqnarray*}
	where 
	\begin{eqnarray*}
		\cos \frac{\theta }{2} &=&a_{0},\text{ }\sin \frac{\theta }{2}=\sqrt{\lambda
			_{1}\lambda _{2}a_{1}^{2}+\lambda _{1}\lambda _{3}a_{2}^{2}+\lambda
			_{2}\lambda _{3}a_{3}^{2}}, \\
		\hat{p} &=&\frac{a_{1}e_{1}+a_{2}e_{2}+a_{3}e_{3}}{\sqrt{\lambda
				_{1}\lambda _{2}a_{1}^{2}+\lambda _{1}\lambda _{3}a_{2}^{2}+\lambda
				_{2}\lambda _{3}a_{3}^{2}}} \in S_{\mathbb{K}}^{2}.
	\end{eqnarray*}
	Firstly, we need to find skew-symmetric matrix of the vector $\hat{p}$. If
	
	\begin{equation*}
	\varepsilon =\left[ 
	\begin{array}{ccc}
	\lambda _{1}\lambda _{2} & 0 & 0 \\ 
	0 & \lambda _{1}\lambda _{3} & 0 \\ 
	0 & 0 & \lambda _{2}\lambda _{3}%
	\end{array}%
	\right] \text{ }
	\end{equation*}%
	then a matrix $S$ that providing the proposition $\varepsilon S=S^{T}\left(
	-\varepsilon \right) $ must exist; 
	\begin{eqnarray*}
		\varepsilon S &=&\left[ 
		\begin{array}{ccc}
			\lambda _{1}\lambda _{2} & 0 & 0 \\ 
			0 & \lambda _{1}\lambda _{3} & 0 \\ 
			0 & 0 & \lambda _{2}\lambda _{3}%
		\end{array}%
		\right] \left[ 
		\begin{array}{ccc}
			0 & -\lambda _{3}s_{3} & \lambda _{3}s_{2} \\ 
			\lambda _{2}s_{3} & 0 & -\lambda _{2}s_{1} \\ 
			-\lambda _{1}s_{2} & \lambda _{1}s_{1} & 0%
		\end{array}%
		\right]  \\
		&=&\left[ 
		\begin{array}{ccc}
			0 & -\lambda _{1}\lambda _{2}\lambda _{3}s_{3} & \lambda _{1}\lambda
			_{2}\lambda _{3}s_{2} \\ 
			\lambda _{1}\lambda _{2}\lambda _{3}s_{3} & 0 & -\lambda _{1}\lambda
			_{2}\lambda _{3}s_{1} \\ 
			-\lambda _{1}\lambda _{2}\lambda _{3}s_{2} & \lambda _{1}\lambda _{2}\lambda
			_{3}s_{1} & 0%
		\end{array}%
		\right] \allowbreak \allowbreak , \\
		S^{T}(-\varepsilon ) &=&\left[ 
		\begin{array}{ccc}
			0 & \lambda _{2}s_{3} & -\lambda _{1}s_{2} \\ 
			-\lambda _{3}s_{3} & 0 & \lambda _{1}s_{1} \\ 
			\lambda _{3}s_{2} & -\lambda _{2}s_{1} & 0%
		\end{array}%
		\right] \left[ 
		\begin{array}{ccc}
			-\lambda _{1}\lambda _{2} & 0 & 0 \\ 
			0 & -\lambda _{1}\lambda _{3} & 0 \\ 
			0 & 0 & -\lambda _{2}\lambda _{3}%
		\end{array}%
		\right]  \\
		&=&\allowbreak \left[ 
		\begin{array}{ccc}
			0 & -\lambda _{1}\lambda _{2}\lambda _{3}s_{3} & \lambda _{1}\lambda
			_{2}\lambda _{3}s_{2} \\ 
			\lambda _{1}\lambda _{2}\lambda _{3}s_{3} & 0 & -\lambda _{1}\lambda
			_{2}\lambda _{3}s_{1} \\ 
			-\lambda _{1}\lambda _{2}\lambda _{3}s_{2} & \lambda _{1}\lambda _{2}\lambda
			_{3}s_{1} & 0%
		\end{array}%
		\right] \allowbreak \allowbreak .
	\end{eqnarray*}%
	The matrix $S$ is attained as follows:
	\begin{equation*}
	S=\left[ 
	\begin{array}{ccc}
	0 & -\lambda _{3}s_{3} & \lambda _{3}s_{2} \\ 
	\lambda _{2}s_{3} & 0 & -\lambda _{2}s_{1} \\ 
	-\lambda _{1}s_{2} & \lambda _{1}s_{1} & 0%
	\end{array}%
	\right] \leftrightarrow \hat{p}=\left( p_{1},p_{2},p_{3}\right) .
	\end{equation*}%
	Let two skew-symmetric matrices be 
	\begin{equation*}
	S=\left[ 
	\begin{array}{ccc}
	0 & -\lambda _{3}s_{3} & \lambda _{3}s_{2} \\ 
	\lambda _{2}s_{3} & 0 & -\lambda _{2}s_{1} \\ 
	-\lambda _{1}s_{2} & \lambda _{1}s_{1} & 0%
	\end{array}%
	\right] \text{ and }T=\left[ 
	\begin{array}{ccc}
	0 & -\lambda _{3}t_{3} & \lambda _{3}t_{2} \\ 
	\lambda _{2}t_{3} & 0 & -\lambda _{2}t_{1} \\ 
	-\lambda _{1}t_{2} & \lambda _{1}t_{1} & 0%
	\end{array}%
	\right] .
	\end{equation*}%
	($T\leftrightarrow \hat{q}=\left( q_{1},q_{2},q_{3}\right) $ ) Then 
	\begin{equation*}
	ST-TS=\left[ 
	\begin{array}{ccc}
	0 & \lambda _{1}\lambda _{3}\left( s_{2}t_{1}-s_{1}t_{2}\right)  & \lambda
	_{2}\lambda _{3}\left( s_{3}t_{1}-s_{1}t_{3}\right)  \\ 
	\lambda _{1}\lambda _{2}\left( s_{1}t_{2}-s_{2}t_{1}\right)  & 0 & \lambda
	_{2}\lambda _{3}\left( s_{3}t_{2}-s_{2}t_{3}\right)  \\ 
	\lambda _{1}\lambda _{2}\left( s_{1}t_{3}-s_{3}t_{1}\right)  & \lambda
	_{1}\lambda _{3}\left( s_{2}t_{3}-s_{3}t_{2}\right)  & 0%
	\end{array}%
	\right] \allowbreak 
	\end{equation*}%
	is found. Hence 
	\begin{eqnarray*}
		ST-TS &\leftrightarrow &\left( \lambda _{3}\left(
		s_{2}t_{3}-s_{3}t_{2}\right) ,\lambda _{2}\left(
		s_{3}t_{1}-s_{1}t_{3}\right) ,\lambda _{1}\left(
		s_{1}t_{2}-s_{2}t_{1}\right) \right)  \\
		&=&\hat{p}\wedge \hat{q}
	\end{eqnarray*}%
	is obtained. If 
	\begin{equation*}
	\cos \frac{\theta }{2}=a_{0},\text{ }s_{1}\sin \frac{\theta }{2}=a_{1},\text{
	}s_{2}\sin \frac{\theta }{2}=a_{2}\text{ and }s_{3}\sin \frac{\theta }{2}%
	=a_{3}
	\end{equation*}%
	then the matrix $\mathcal{A}dp$ is attained as follows:
	\begin{eqnarray*}
		\left[ \tiny{
			\begin{array}{cc}
				\cos ^{2}\frac{\theta }{2}+\left( \lambda _{1}\lambda _{2}s_{1}^{2}-\lambda
				_{1}\lambda _{3}s_{2}^{2}-\lambda _{2}\lambda _{3}s_{3}^{2}\right) \sin ^{2}%
				\frac{\theta }{2} & 2\lambda _{1}\lambda _{3}s_{1}s_{2}\sin ^{2}\frac{\theta 
				}{2}-2\lambda _{3}s_{3}\cos \frac{\theta }{2}\sin \frac{\theta }{2} \\ 
				2\lambda _{1}\lambda _{2}s_{1}s_{2}\sin ^{2}\frac{\theta }{2}+2\lambda
				_{2}s_{3}\cos \frac{\theta }{2}\sin \frac{\theta }{2} & \cos ^{2}\frac{%
					\theta }{2}+\left( -\lambda _{1}\lambda _{2}s_{1}^{2}+\lambda _{1}\lambda
				_{3}s_{2}^{2}-\lambda _{2}\lambda _{3}s_{3}^{2}\right) \sin ^{2}\frac{\theta 
				}{2} \\ 
				2\lambda _{1}\lambda _{2}s_{1}s_{3}\sin ^{2}\frac{\theta }{2}-2\lambda
				_{1}s_{2}\cos \frac{\theta }{2}\sin \frac{\theta }{2} & 2\lambda _{1}\lambda
				_{3}s_{2}s_{3}\sin ^{2}\frac{\theta }{2}+2\lambda _{1}s_{1}\cos \frac{\theta 
				}{2}\sin \frac{\theta }{2}%
			\end{array}%
		} \right.  \\ \\
		\left. \tiny{
			\begin{array}{c}
				2\lambda _{2}\lambda _{3}s_{1}s_{3}\sin ^{2}\frac{\theta }{2}+2\lambda
				_{3}s_{2}\cos \frac{\theta }{2}\sin \frac{\theta }{2} \\ 
				2\lambda _{2}\lambda _{3}s_{2}s_{3}\sin ^{2}\frac{\theta }{2}-2\lambda
				_{2}s_{1}\cos \frac{\theta }{2}\sin \frac{\theta }{2} \\ 
				\cos ^{2}\frac{\theta }{2}+\left( -\lambda _{1}\lambda _{2}s_{1}^{2}-\lambda
				_{1}\lambda _{3}s_{2}^{2}+\lambda _{2}\lambda _{3}s_{3}^{2}\right) \sin ^{2}%
				\frac{\theta }{2}%
		\end{array} }
		\right] .
	\end{eqnarray*}%
	Let us edit above expression, $\mathcal{A}dp $ is as follows
	\begin{eqnarray*}
		I+\left[ \tiny{
			\begin{array}{cc}
				\left( \lambda _{1}\lambda _{2}s_{1}^{2}-\lambda _{1}\lambda
				_{3}s_{2}^{2}-\lambda _{2}\lambda _{3}s_{3}^{2}-1\right) \sin ^{2}\frac{%
					\theta }{2} & 2\lambda _{1}\lambda _{3}s_{1}s_{2}\sin ^{2}\frac{\theta }{2}%
				-\lambda _{3}s_{3}\sin \theta  \\ 
				2\lambda _{1}\lambda _{2}s_{1}s_{2}\sin ^{2}\frac{\theta }{2}+\lambda
				_{2}s_{3}\sin \theta  & \left( -\lambda _{1}\lambda _{2}s_{1}^{2}+\lambda
				_{1}\lambda _{3}s_{2}^{2}-\lambda _{2}\lambda _{3}s_{3}^{2}-1\right) \sin
				^{2}\frac{\theta }{2} \\ 
				2\lambda _{1}\lambda _{2}s_{1}s_{3}\sin ^{2}\frac{\theta }{2}-\lambda
				_{1}s_{2}\sin \theta  & 2\lambda _{1}\lambda _{3}s_{2}s_{3}\sin ^{2}\frac{%
					\theta }{2}+\lambda _{1}s_{1}\sin \theta 
			\end{array}%
		}
		\right.  \\ \\
		\left. \tiny{
			\begin{array}{c}
				2\lambda _{2}\lambda _{3}s_{1}s_{3}\sin ^{2}\frac{\theta }{2}+\lambda
				_{3}s_{2}\sin \theta  \\ 
				2\lambda _{2}\lambda _{3}s_{2}s_{3}\sin ^{2}\frac{\theta }{2}-\lambda
				_{2}s_{1}\sin \theta  \\ 
				\left( -\lambda _{1}\lambda _{2}s_{1}^{2}-\lambda _{1}\lambda
				_{3}s_{2}^{2}+\lambda _{2}\lambda _{3}s_{3}^{2}-1\right) \sin ^{2}\frac{%
					\theta }{2}%
			\end{array}%
		} \right] 
	\end{eqnarray*}%
	Here if we use the equation
	\begin{equation*}
	2\sin ^{2}\frac{\theta }{2}=1-\cos \theta \text{ ve }\lambda _{1}\lambda
	_{2}s_{1}^{2}+\lambda _{1}\lambda _{3}s_{2}^{2}+\lambda _{2}\lambda
	_{3}s_{3}^{2}=1
	\end{equation*}%
	then 
	\begin{eqnarray*}
		\mathcal{A}dp &=&I+\sin \theta \left[ {\small {%
				\begin{array}{ccc}
					0 & -\lambda _{3}s_{3} & \lambda _{3}s_{2} \\ 
					\lambda _{2}s_{3} & 0 & -\lambda _{2}s_{1} \\ 
					-\lambda _{1}s_{2} & \lambda _{1}s_{1} & 0%
				\end{array}%
		}}\right] +\left( 1-\cos \theta \right)  \\
		&.&\left[ {\small {%
				\begin{array}{ccc}
					-\lambda _{1}\lambda _{3}s_{2}^{2}-\lambda _{2}\lambda _{3}s_{3}^{2} & 
					\lambda _{1}\lambda _{3}s_{1}s_{2} & \lambda _{2}\lambda _{3}s_{1}s_{3} \\ 
					\lambda _{1}\lambda _{2}s_{1}s_{2} & -\lambda _{1}\lambda
					_{2}s_{1}^{2}-\lambda _{2}\lambda _{3}s_{3}^{2} & \lambda _{2}\lambda
					_{3}s_{2}s_{3} \\ 
					\lambda _{1}\lambda _{2}s_{1}s_{3} & \lambda _{1}\lambda _{3}s_{2}s_{3} & 
					-\lambda _{1}\lambda _{2}s_{1}^{2}-\lambda _{1}\lambda _{3}s_{2}^{2}%
				\end{array}%
		}}\right] 
	\end{eqnarray*}%
	is obtained which means that it is 
	\begin{equation*}
	\mathcal{A}dp=I+\sin \theta S+\left( 1-\cos \theta \right) S^{2}.
	\end{equation*}%
	$i\in \left\{ 1,2,3\right\} ,$ the case of $\lambda _{i}=1$, $\mathcal{A}dp$
	is the matrix that have it made rotation through the angle $\theta $ around an
	axis on $\mathbb{R}^{3}$ 
\end{proof}

\subsubsection{Lie multiplication}

For point $e$ of the Lie group \newline
$S_{\mathbb{K}}=\left\{ p\in \mathbb{K}:N_{p}=1\right\} $, let us show the
set of left invariant vector fields as
\begin{equation*}
\mathcal{X}_{l}\left( S_{\mathbb{K}}\right) =\left\{ X\in \mathcal{X}\left(
S_{\mathbb{K}}\right)| \left( l_{p}\right) _{\ast }\left( X\right) =X\right\}
\end{equation*}%
$\mathcal{X}_{l}\left( S_{\mathbb{K}}\right) $ is isomorphic to tangent
space at point $e$. In that case \linebreak $\mathcal{X}_{l}\left( S_{%
	\mathbb{K}}\right) \cong T_{S_{\mathbb{K}}}\left( e\right) $. The following
multiplication is a Lie multiplication : 
\begin{equation*}
\begin{array}{cc}
\left[ ,\right] : & T_{S_{\mathbb{K}}}\left( e\right) \times T_{S_{\mathbb{K}%
}}\left( e\right) \rightarrow T_{S_{\mathbb{K}}}\left( e\right) \hspace*{4cm}
\\ 
& \left( X,Y\right) \rightarrow \left[ X,Y\right] =D_{X}Y-D_{Y}X%
\end{array}%
\end{equation*}%
where $D_{X}Y$ is covariant derivative of $Y$ according to $X$. $\left(
T_{S_{\mathbb{K}}}\left( e\right) ,\left[ ,\right] \right) $ is Lie algebra
of Lie group $S_{\mathbb{K}}$. Let us find the rule of the Lie
multiplication: \linebreak At point $s=0$, let us take a curve passing
through point $e$ that is $\gamma _{1}^{^{\prime }}\left( 0\right) =e_{1}~$: 
\begin{equation*}
\begin{array}{cc}
\gamma _{1}: & I\rightarrow G \hspace*{0.6cm} \\ 
& s\rightarrow \gamma _{1}\left( s\right)%
\end{array}%
\end{equation*}%
Let $p\in S_{\mathbb{K}}$. We have 
\begin{equation*}
\begin{array}{cc}
\vartheta _{1}: & I\rightarrow G \hspace*{0.6cm} \\ 
& s\rightarrow \vartheta _{l}\left( s\right)%
\end{array}%
\end{equation*}%
so that $\left( l_{p}\right) \left( \gamma _{1}\left( s\right) \right)
=\vartheta _{1}\left( s\right) $ and $\left( l_{p}\right) _{\ast }\left(
\gamma _{1}^{^{\prime }}\left( 0\right) \right) =\vartheta _{1}^{^{\prime
}}\left( 0\right) $. If taken \newline
$p=a_{0}+a_{1}e_{1}+a_{2}e_{2}+a_{3}e_{3}$ into the equation 
\begin{equation*}
\left( l_{p}\right) _{\ast }\left( \gamma _{1}^{^{\prime }}\left( 0\right)
\right) =\vartheta _{1}^{^{\prime }}\left( 0\right)
\end{equation*}%
then we get 
\begin{equation*}
\vartheta _{1}^{^{\prime }}\left( 0\right) =pe_{1}=-\lambda _{1}\lambda
_{2}a_{1}+a_{0}e_{1}+\lambda _{2}a_{3}e_{2}-\lambda _{1}a_{2}e_{3}=X_{1}.
\end{equation*}%
Similarly, we have 
\begin{equation*}
\vartheta _{2}^{^{\prime }}\left( 0\right) =pe_{2}=-\lambda _{1}\lambda
_{3}a_{2}-\lambda _{3}a_{3}e_{1}+a_{0}e_{2}+\lambda _{1}a_{1}e_{3}=X_{2}
\end{equation*}%
and 
\begin{equation*}
\vartheta _{3}^{^{\prime }}\left( 0\right) =pe_{3}=-\lambda _{2}\lambda
_{3}a_{2}+\lambda _{3}a_{2}e_{1}-\lambda _{2}a_{1}e_{2}+a_{0}e_{3}=X_{3}.
\end{equation*}%
where $\vartheta _{1},\vartheta _{2} : I\rightarrow G$. Thus a base of $\mathcal{X}_{l}\left( S_{\mathbb{K}}\right) $ is $\left\{
X_{1},X_{2},X_{3}\right\} $. The base vectors is written as a linear
cambination of base vectors of $\mathcal{X}_{l}\left( E^{4}\right) $: 
\begin{eqnarray*}
	X_{1} &=&-\lambda _{1}\lambda _{2}a_{1}\frac{\partial }{\partial x_{0}}+a_{0}%
	\frac{\partial }{\partial x_{1}}+\lambda _{2}a_{3}\frac{\partial }{\partial
		x_{2}}-\lambda _{1}a_{2}\frac{\partial }{\partial x_{3}}, \\
	X_{2} &=&-\lambda _{1}\lambda _{3}a_{2}\frac{\partial }{\partial x_{0}}%
	-\lambda _{3}a_{3}\frac{\partial }{\partial x_{1}}+a_{0}\frac{\partial }{%
		\partial x_{2}}+\lambda _{1}a_{1}\frac{\partial }{\partial x_{3}}, \\
	X_{3} &=&-\lambda _{2}\lambda _{3}a_{3}\frac{\partial }{\partial x_{0}}%
	+\lambda _{3}a_{2}\frac{\partial }{\partial x_{1}}-\lambda _{2}a_{1}\frac{%
		\partial }{\partial x_{2}}+a_{0}\frac{\partial }{\partial x_{3}}.
\end{eqnarray*}%
Now Let us define the Bracket operator on $\mathcal{X}_{l}\left( S_{\mathbb{K%
}}\right) $. For this purpose, giving multiplication rule of the bases of
set $\mathcal{X}_{l}\left( S_{\mathbb{K}}\right) $ will be enough: 
\begin{eqnarray*}
	D_{X_{1}}X_{2} &=&\left( -\lambda _{1}\lambda _{2}\lambda _{3}a_{3},\lambda
	_{1}\lambda _{3}a_{2},-\lambda _{1}\lambda _{2}a_{1},\lambda _{1}a_{0}\right)
	\\
	D_{X_{2}}X_{1} &=&\left( \lambda _{1}\lambda _{2}\lambda _{3}a_{3},-\lambda
	_{1}\lambda _{3}a_{2},\lambda _{1}\lambda _{2}a_{1},-\lambda _{1}a_{0}\right)
\end{eqnarray*}%
is found. If we use 
\begin{equation*}
\left[ X_{1},X_{2}\right] =D_{X_{1}}X_{2}-D_{X_{2}}X_{1}
\end{equation*}%
then we obtain 
\begin{equation*}
\left[ X_{1},X_{2}\right] =\left( -2\lambda _{1}\lambda _{2}\lambda
_{3}a_{3},2\lambda _{1}\lambda _{3}a_{2},-2\lambda _{1}\lambda
_{2}a_{1},2\lambda _{1}a_{0}\right) =2\lambda _{1}X_{3}.
\end{equation*}%
In the same way 
\begin{eqnarray*}
	D_{X_{2}}X_{3} &=&\left( -\lambda _{1}\lambda _{2}\lambda _{3}a_{1},\lambda
	_{3}a_{0},\lambda _{2}\lambda _{3}a_{3},-\lambda _{1}\lambda
	_{3}p_{2}\right) , \\
	D_{X_{3}}X_{2} &=&\left( \lambda _{1}\lambda _{2}\lambda _{3}a_{1},-\lambda
	_{3}a_{0},-\lambda _{2}\lambda _{3}a_{3},\lambda _{1}\lambda _{3}a_{2}\right)
\end{eqnarray*}%
are obtained. Also from these equations 
\begin{equation*}
\left[ X_{2},X_{3}\right] =2\lambda _{3}X_{1}
\end{equation*}%
is found. Finally, from the equations 
\begin{eqnarray*}
	D_{X_{3}}X_{1} &=&\left( -\lambda _{1}\lambda _{2}\lambda _{3}a_{2},-\lambda
	_{2}\lambda _{3}a_{0},\lambda _{2}a_{0},\lambda _{1}\lambda _{2}a_{1}\right)
	\\
	D_{X_{1}}X_{3} &=&\left( \lambda _{1}\lambda _{2}\lambda _{3}a_{2},\lambda
	_{2}\lambda _{3}a_{0},-\lambda _{2}a_{0},-\lambda _{1}\lambda
	_{2}a_{1}\right)
\end{eqnarray*}%
we achieved 
\begin{equation*}
\left[ X_{3},X_{1}\right] =2\lambda _{2}X_{2}.
\end{equation*}%
Since $\mathcal{X}_{l}\left( S_{\mathbb{K}}\right) \cong T_{S_{\mathbb{K}%
}}\left( e\right) $, we can give the Bracket multiplication rule on $T_{S_{%
		\mathbb{K}}}\left( e\right) $. Here provided that 
\begin{equation*}
e_{i}=\left. \left( \frac{\partial }{\partial x_{i}}\right) \right\vert _{e}%
\text{ }\left( i=1,2,3\right)
\end{equation*}%
respectively 
\begin{equation*}
\left[ X_{1},X_{2}\right] \mid _{e}=2\lambda _{1}\left( X_{3}\right) \mid
_{e}
\end{equation*}%
and 
\begin{equation*}
\left[ \left( X_{1}\right) \mid _{e},\left( X_{2}\right) \mid _{e}\right]
=2\lambda _{1}e_{3}
\end{equation*}%
is found. Therefore 
\begin{equation*}
\left[ e_{1},e_{2}\right] =2\lambda _{1}e_{3}
\end{equation*}%
is obtained. In a similar way the following equations are obtained: 
\begin{equation*}
\left[ e_{2},e_{3}\right] =2\lambda _{3}e_{1}\ \text{and}\ \left[ e_{3},e_{1}%
\right] =2\lambda _{2}e_{2}.
\end{equation*}%
\subsubsection{Matrix representation for Lie Algebra of $S_{\mathbb{K}}$}

Let us $X\in T_{S_{\mathbb{K}}}\left( e\right) $ and define mapping 
\begin{equation*}
\begin{array}[t]{ll}
Ad_{X}:T_{S_{\mathbb{K}}}\left( e\right) & \rightarrow T_{S_{\mathbb{K}%
}}\left( e\right) \\ 
\hspace{1.75cm}Y & \rightarrow Ad_{X}\left( Y\right) =\left[ X,Y\right] .%
\end{array}%
\end{equation*}%
According to the mapping, the matrix that corresponding to the linear mapping $Ad_{X}$
is the matrix notation of Lie algebra $S_{\mathbb{K}}$.

\begin{thm}
	Let $X=x_{1}e_{1}+x_{2}e_{2}+x_{3}e_{3}~$. 
	\begin{equation*}
	Ad_{X}=\left[ 
	\begin{array}{ccc}
	0 & -2\lambda _{3}x_{3} & 2\lambda _{3}x_{2} \\ 
	2\lambda _{2}x_{3} & 0 & -2\lambda _{2}x_{1} \\ 
	-2\lambda _{1}x_{2} & 2\lambda _{1}x_{1} & 0%
	\end{array}
	\right] .
	\end{equation*}
\end{thm}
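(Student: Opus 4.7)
The plan is straightforward: since $Ad_X$ is a linear map on $T_{S_{\mathbb{K}}}(e) \cong \mathrm{Im}(\mathbb{K})$, I only need to compute $Ad_X(e_j) = [X, e_j]$ for $j = 1, 2, 3$ and read off the columns of the matrix with respect to the ordered basis $\{e_1, e_2, e_3\}$.

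First I would observe that the Lie bracket is antisymmetric, which follows immediately from the definition $[X,Y] = D_X Y - D_Y X$, so $[e_i, e_i] = 0$ and the three identities
\[
[e_1, e_2] = 2\lambda_1 e_3, \quad [e_2, e_3] = 2\lambda_3 e_1, \quad [e_3, e_1] = 2\lambda_2 e_2
\]
established at the end of the preceding subsection yield the full table on basis vectors, with the reversed brackets differing only by a sign.

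Next, I would expand $[X, e_j]$ using bilinearity and $X = x_1 e_1 + x_2 e_2 + x_3 e_3$. For instance,
\[
[X, e_1] = x_2[e_2, e_1] + x_3[e_3, e_1] = -2\lambda_1 x_2 e_3 + 2\lambda_2 x_3 e_2,
\]
which gives the first column $(0,\ 2\lambda_2 x_3,\ -2\lambda_1 x_2)^T$. The analogous computations for $[X, e_2]$ and $[X, e_3]$ produce the middle and right columns respectively, matching the matrix in the theorem.

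The main obstacle is purely cosmetic: keeping track of signs when applying antisymmetry and making sure the coefficients are placed in the correct column (since columns correspond to the inputs $e_j$ and rows to the output basis). No structural or analytic ingredient is needed beyond the bracket relations already in hand, so the argument is essentially a short bookkeeping exercise once the three basic brackets are invoked.
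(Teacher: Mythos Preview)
Your proposal is correct and follows essentially the same approach as the paper: compute $[X,e_j]$ for $j=1,2,3$ using bilinearity of the bracket together with the relations $[e_1,e_2]=2\lambda_1 e_3$, $[e_2,e_3]=2\lambda_3 e_1$, $[e_3,e_1]=2\lambda_2 e_2$ (and $[e_i,e_i]=0$), then record the results as the columns of $Ad_X$. The paper's argument is identical in substance and layout.
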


\begin{proof}
	Let $X=x_{1}e_{1}+x_{2}e_{2}+x_{3}e_{3}$. Let us find the matrix corresponding to the linear mapping. If we write as
	\begin{equation*}
	Ad_{X}\left( e_{1}\right) =\left[ X,e_{1}\right] =\left[
	x_{1}e_{1}+x_{2}e_{2}+x_{3}e_{3},e_{1}\right]
	\end{equation*}%
	then since
	\begin{equation*}
	\left[ e_{1},e_{2}\right] =2\lambda _{1}e_{3},\text{ }\left[ e_{2},e_{3}%
	\right] =2\lambda _{3}e_{1}\text{ ve }\left[ e_{3},e_{1}\right] =2\lambda
	_{2}e_{2}
	\end{equation*}%
	and 
	\begin{equation*}
	\left[ e_{1},e_{1}\right] =\left[ e_{2},e_{2}\right] =\left[ e_{3},e_{3}%
	\right] =0
	\end{equation*}%
	also the Lie multiplication is linear, we obtain
	\begin{equation*}
	\left[ X,e_{1}\right] =2\lambda _{2}x_{3}e_{2}-2\lambda _{1}x_{2}e_{3}.
	\end{equation*}%
	Similarly
	\begin{eqnarray*}
		Ad_{X}\left( e_{2}\right) &=&\left[ X,e_{2}\right] \\
		&=&\left[ x_{1}e_{1}+x_{2}e_{2}+x_{3}e_{3},e_{2}\right] \\
		&=&-2\lambda _{3}x_{3}e_{1}+2\lambda _{1}x_{1}e_{3}, \\
		Ad_{X}\left( e_{3}\right) &=&\left[ X,e_{3}\right] \\
		&=&\left[ x_{1}e_{1}+x_{2}e_{2}+x_{3}e_{3},e_{3}\right] \\
		&=&-2\lambda _{3}x_{2}e_{1}-2\lambda _{2}x_{1}e_{2}.
	\end{eqnarray*}%
	Therefore the matrix that corresponding to linear map $Ad_{X}$ is the matrix of Lie algebra $S_{\mathbb{K}}$. And this matrix is
	\begin{equation*}
	Ad_{X}=\left[ 
	\begin{array}{ccc}
	0 & -2\lambda _{3}x_{3} & 2\lambda _{3}x_{2} \\ 
	2\lambda _{2}x_{3} & 0 & -2\lambda _{2}x_{1} \\ 
	-2\lambda _{1}x_{2} & 2\lambda _{1}x_{1} & 0%
	\end{array}%
	\right].
	\end{equation*}%
\end{proof} 
\subsubsection{Killing bi-linear form}

It is well known that the Killing form is a specific bi-linear form on a
finite-dimensional Lie algebra, defined by W. Killing. The following mapping
is called Killing bi-linear form of the Lie group $S_{\mathbb{K}}$: 
\begin{equation*}
\begin{array}{cc}
\mathcal{K}: & T_{S_{\mathbb{K}}}\left( e\right) \times T_{S_{\mathbb{K}%
}}\left( e\right) \rightarrow T_{S_{\mathbb{K}}}\left( e\right) \hspace*{4.5cm}
\\ 
& \left( X,Y\right) \rightarrow \mathcal{K}\left( X,Y\right) =tr\left(
Ad_{X}Ad_{Y}\right). 
\end{array}%
\end{equation*}%
The mapping $\mathcal{K}$ has the following properties:\newline
i. $\mathcal{K}$ is bi-linear,\newline
ii. $\mathcal{K}\left( X,Y\right) =\mathcal{K}\left( Y,X\right)$,\newline
iii. $\mathcal{K}\left( X,Y\right) =\mathcal{K}\left( Ad_{X},~Ad_{Y}\right) $.

\begin{thm}
	If 
	\begin{equation*}
	\begin{array}{cc}
	f: & \text{Im}\left( \mathbb{K}\right) \times \text{Im}\left( \mathbb{K}%
	\right) \rightarrow \mathbb{R}\hspace*{8.1cm} \\ 
	& \left( X,Y\right) \rightarrow f\left( X,Y\right) =\lambda _{1}\lambda
	_{2}x_{1}y_{1}+\lambda _{1}\lambda _{3}x_{2}y_{2}+\lambda _{2}\lambda
	_{3}x_{3}y_{3}%
	\end{array}%
	,
	\end{equation*}%
	then 
	\begin{equation*}
	\mathcal{K}\left( X,Y\right) =-8f\left( X,Y\right) .
	\end{equation*}
\end{thm}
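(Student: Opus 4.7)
The plan is to compute $\mathcal{K}(X,Y) = \operatorname{tr}(Ad_X Ad_Y)$ directly from the matrix form of $Ad_X$ established in Theorem 5.6. Since the Killing form only requires the diagonal entries of the product, I will avoid computing off-diagonal entries and focus on the three diagonal slots.

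First, I would write $Ad_X$ and $Ad_Y$ in the explicit $3\times 3$ form from Theorem 5.6:
\begin{equation*}
Ad_{X}=\begin{bmatrix} 0 & -2\lambda_{3}x_{3} & 2\lambda_{3}x_{2} \\ 2\lambda_{2}x_{3} & 0 & -2\lambda_{2}x_{1} \\ -2\lambda_{1}x_{2} & 2\lambda_{1}x_{1} & 0 \end{bmatrix}, \qquad Ad_{Y}=\begin{bmatrix} 0 & -2\lambda_{3}y_{3} & 2\lambda_{3}y_{2} \\ 2\lambda_{2}y_{3} & 0 & -2\lambda_{2}y_{1} \\ -2\lambda_{1}y_{2} & 2\lambda_{1}y_{1} & 0 \end{bmatrix}.
\end{equation*}
Next, I would compute the three diagonal entries of $Ad_X Ad_Y$ by dotting the $i$th row of $Ad_X$ with the $i$th column of $Ad_Y$. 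Because the diagonal of $Ad_X$ is zero, only two of the three products in each dot product survive, so each $(Ad_XAd_Y)_{ii}$ is a sum of just two terms, each a product of two $\lambda$'s and an $x_iy_i$.

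The key observation I expect to use is that $(Ad_XAd_Y)_{ii}$ only produces \emph{diagonal} mixed products of the form $x_jy_j$ (no $x_jy_k$ with $j\neq k$), because the zero diagonal of $Ad_X$ forces the surviving multiplications to involve the same index pair $(j,k)$ on both sides. Summing the three diagonal entries, each term $\lambda_i\lambda_j x_k y_k$ ought to appear exactly twice (once from row $i$ and once from row $j$), giving the coefficient $-8$ after accounting for the $(-2)(+2)$ sign pattern inherited from the skew-symmetric structure of $Ad_X$. This will produce
\begin{equation*}
\operatorname{tr}(Ad_X Ad_Y) = -8\bigl(\lambda_{1}\lambda_{2}x_{1}y_{1}+\lambda_{1}\lambda_{3}x_{2}y_{2}+\lambda_{2}\lambda_{3}x_{3}y_{3}\bigr) = -8f(X,Y).
\end{equation*}

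There is no real conceptual obstacle; the entire proof is a bookkeeping computation. The only thing to watch is the sign pattern: the $(i,j)$ and $(j,i)$ entries of $Ad_X$ carry opposite signs and different $\lambda$-weights (for instance $-2\lambda_3 x_3$ versus $2\lambda_2 x_3$), so when paired with the matching entries of $Ad_Y$ the products all come out negative with the correct $\lambda_i\lambda_j$ weight. Verifying this uniformly across the three diagonal positions is the whole content of the proof, after which the identification with $-8f(X,Y)$ is immediate by definition of $f$.
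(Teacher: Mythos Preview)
Your proposal is correct and follows essentially the same approach as the paper: both write out the explicit $3\times 3$ matrices $Ad_X$, $Ad_Y$ from Theorem 5.6, form the product, and read off the trace as $-8f(X,Y)$. The only difference is cosmetic---the paper displays the full $3\times 3$ product $Ad_X Ad_Y$ before taking the trace, whereas you compute only the three diagonal entries; the underlying computation and the result are identical.
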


\begin{proof}
	Let
	$X=x_{1}e_{1}+x_{2}e_{2}+x_{3}e_{3}~$ and $Y=y_{1}e_{1}+y_{2}e_{2}+y_{3}e_{3}$. Since
	\begin{equation*}
	Ad_{X}=\left[ 
	\begin{array}{ccc}
	0 & -2\lambda _{3}x_{3} & 2\lambda _{3}x_{2} \\ 
	2\lambda _{2}x_{3} & 0 & -2\lambda _{2}x_{1} \\ 
	-2\lambda _{1}x_{2} & 2\lambda _{1}x_{1} & 0%
	\end{array}%
	\right]
	\end{equation*}%
	$\allowbreak $and 
	\begin{equation*}
	Ad_{Y}=\left[ 
	\begin{array}{ccc}
	0 & -2\lambda _{3}y_{3} & 2\lambda _{3}y_{2} \\ 
	2\lambda _{2}y_{3} & 0 & -2\lambda _{2}y_{1} \\ 
	-2\lambda _{1}y_{2} & 2\lambda _{1}y_{1} & 0%
	\end{array}%
	\right]
	\end{equation*}%
	we obtain $Ad_{X}Ad_{Y}$ as
	\begin{equation*}
	\left[ 
	\begin{array}{ccc}
	\text{-}4\lambda _{1}\lambda _{3}x_{2}y_{2}\text{-}4\lambda _{2}\lambda
	_{3}x_{3}y_{3} & 4\lambda _{1}\lambda _{3}x_{2}y_{1} & 4\lambda _{2}\lambda
	_{3}x_{3}y_{1} \\ 
	4\lambda _{1}\lambda _{2}x_{1}y_{2} & \text{-}4\lambda _{1}\lambda
	_{2}x_{1}y_{1}\text{-}4\lambda _{2}\lambda _{3}x_{3}y_{3} & 4\lambda
	_{2}\lambda _{3}x_{3}y_{2} \\ 
	4\lambda _{1}\lambda _{2}x_{1}y_{3} & 4\lambda _{1}\lambda _{3}x_{2}y_{3} & 
	\text{-}4\lambda _{1}\lambda _{2}x_{1}y_{1}\text{-}4\lambda _{1}\lambda
	_{3}x_{2}y_{2}%
	\end{array}%
	\right]. \allowbreak
	\end{equation*}%
	Sum of diagonal elements of the matrix $Ad_{X}Ad_{Y}$ is
	\begin{equation*}
	tr\left( Ad_{X}Ad_{Y}\right) =-8\left( \lambda _{1}\lambda
	_{2}x_{1}y_{1}+\lambda _{1}\lambda _{3}x_{2}y_{2}+\lambda _{2}\lambda
	_{3}x_{3}y_{3}\right).
	\end{equation*}%
	Thus
	\begin{equation*}
	tr\left( Ad_{X}Ad_{Y}\right) =-8f\left( X,Y\right)
	\end{equation*}%
	is obtained.
\end{proof}

\begin{thm}
	Let $i\in \left\{ 1,2,3\right\}$ $\lambda _{i}>0$. \ $S_{\mathbb{K}%
	}=\left\{ p\in \mathbb{K}:N_{p}=1\right\} $ is compact.
\end{thm}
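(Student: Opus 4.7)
The plan is to use the standard Heine--Borel characterization of compactness in $\mathbb{R}^4$: I will show that $S_{\mathbb{K}}$, viewed as a subset of $\mathbb{R}^4$ under the identification $\mathbb{K} \cong \mathbb{R}^4$ given by the basis $\{1,e_1,e_2,e_3\}$, is both closed and bounded.

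First I would define the map
\begin{equation*}
N:\mathbb{R}^4 \to \mathbb{R}, \qquad N(a_0,a_1,a_2,a_3) = a_0^2 + \lambda_1\lambda_2 a_1^2 + \lambda_1\lambda_3 a_2^2 + \lambda_2\lambda_3 a_3^2,
\end{equation*}
which is precisely the norm function from Eq.\,(\ref{320}). This map is polynomial and therefore continuous, and $S_{\mathbb{K}} = N^{-1}(\{1\})$. Since $\{1\}$ is closed in $\mathbb{R}$, its preimage $S_{\mathbb{K}}$ is closed in $\mathbb{R}^4$.

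Next I would use the hypothesis $\lambda_i > 0$ for each $i \in \{1,2,3\}$: this forces all three coefficients $\lambda_1\lambda_2$, $\lambda_1\lambda_3$, $\lambda_2\lambda_3$ to be strictly positive, so $N$ is a positive-definite quadratic form on $\mathbb{R}^4$. Setting $c := \min\{1,\lambda_1\lambda_2,\lambda_1\lambda_3,\lambda_2\lambda_3\} > 0$, we obtain $N(p) \geq c\,(a_0^2+a_1^2+a_2^2+a_3^2)$, so $p \in S_{\mathbb{K}}$ implies $a_0^2+a_1^2+a_2^2+a_3^2 \leq 1/c$. Hence $S_{\mathbb{K}}$ is bounded, and by Heine--Borel it is compact.

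Geometrically, the condition $\lambda_i > 0$ makes the level set $\{N=1\}$ an ellipsoid in $\mathbb{R}^4$, and ellipsoids are the prototypical compact quadric surfaces. There is no serious obstacle here; the only subtle point is noting that the positivity hypothesis on the $\lambda_i$ is what rules out the indefinite case (as occurs for split quaternions, where the unit sphere is a hyperboloid and therefore non-compact), and is precisely what one needs to invoke Heine--Borel.
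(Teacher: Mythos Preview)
Your proof is correct, but it takes a different route from the paper. The paper invokes the Lie-theoretic criterion that a Lie group whose Killing form is negative definite is compact: since the preceding theorem establishes $\mathcal{K}(X,X)=-8f(X,X)$ and the hypothesis $\lambda_i>0$ makes $f(X,X)>0$ for $X\neq 0$, negative definiteness of $\mathcal{K}$ follows immediately, and the paper concludes from there.

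Your argument is more elementary and entirely self-contained: you identify $\mathbb{K}$ with $\mathbb{R}^4$, observe that $S_{\mathbb{K}}=N^{-1}(\{1\})$ is closed as the preimage of a point under a polynomial map, and bounded because positivity of the $\lambda_i$ makes $N$ a positive-definite quadratic form; Heine--Borel then finishes. This avoids appealing to the (nontrivial, and not proved in the paper) theorem relating the sign of the Killing form to compactness. On the other hand, the paper's approach has the virtue of tying the result into the Lie-algebraic machinery just developed, so that compactness appears as a consequence of the Killing form computation rather than as a separate topological observation.
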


\begin{proof}
	If $\mathcal{K}\left( X,X\right) <0$, then the Lie group is compact. Since $i\in
	\left\{ 1,2,3\right\} ,$ $\lambda _{i}>0$, we obtain $f\left(
	X,X\right) >0,$ relatively $\mathcal{K}\left( X,X\right) <0$. This gives us the desired.
\end{proof}

\begin{thm}
	Let $K$ be the matrix that corresponding to Killing bi-linear form of
	Lie group $S_{\mathbb{K}}$ and 
	\begin{equation*}
	\varepsilon =\left[ 
	\begin{array}{ccc}
	\lambda _{1}\lambda _{2} & 0 & 0 \\ 
	0 & \lambda _{1}\lambda _{3} & 0 \\ 
	0 & 0 & \lambda _{2}\lambda _{3}%
	\end{array}%
	\right] .
	\end{equation*}%
	Then $K=-8\varepsilon $.
\end{thm}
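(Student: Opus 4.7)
The plan is to compute the matrix $K$ of the Killing form directly from its entries $K_{ij} = \mathcal{K}(e_i, e_j)$ in the basis $\{e_1, e_2, e_3\}$ of $T_{S_{\mathbb{K}}}(e) = \operatorname{Im}(\mathbb{K})$, and invoke the previous theorem to evaluate each entry.

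First, I would recall from Theorem 5.13 that $\mathcal{K}(X,Y) = -8 f(X,Y)$, where
\begin{equation*}
f(X,Y) = \lambda_1 \lambda_2 x_1 y_1 + \lambda_1 \lambda_3 x_2 y_2 + \lambda_2 \lambda_3 x_3 y_3
\end{equation*}
for $X = x_1 e_1 + x_2 e_2 + x_3 e_3$ and $Y = y_1 e_1 + y_2 e_2 + y_3 e_3$. Thus all the calculational work has already been done; the present theorem is purely a matter of reading off coordinates.

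Next, I would specialize $X$ and $Y$ to the basis vectors. For $X = e_i$, only $x_i = 1$ is nonzero, and similarly for $Y = e_j$. Plugging into $f$, every cross term vanishes and the surviving contribution is the diagonal coefficient of $f$. Explicitly,
\begin{equation*}
\mathcal{K}(e_1,e_1) = -8\lambda_1\lambda_2, \quad \mathcal{K}(e_2,e_2) = -8\lambda_1\lambda_3, \quad \mathcal{K}(e_3,e_3) = -8\lambda_2\lambda_3,
\end{equation*}
while $\mathcal{K}(e_i,e_j) = 0$ for $i \neq j$ because in $f$ there are no cross terms.

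Finally, assembling these entries into a matrix yields
\begin{equation*}
K = \begin{bmatrix} -8\lambda_1\lambda_2 & 0 & 0 \\ 0 & -8\lambda_1\lambda_3 & 0 \\ 0 & 0 & -8\lambda_2\lambda_3 \end{bmatrix} = -8\,\varepsilon,
\end{equation*}
which is exactly the claim. There is essentially no obstacle here: once Theorem 5.13 is granted, the present statement is just the change of viewpoint from a bilinear form to its Gram matrix in the chosen basis. The only thing that needs care is confirming the ordering of the basis $\{e_1,e_2,e_3\}$ matches the ordering used for the rows and columns of $\varepsilon$, which it does by inspection.
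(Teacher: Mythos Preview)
Your proof is correct and follows essentially the same approach as the paper: both invoke the previously established identity $\mathcal{K}(X,Y)=-8f(X,Y)$, evaluate it on the basis pairs $(e_i,e_j)$, and assemble the resulting diagonal entries into the matrix $K=-8\varepsilon$. The only cosmetic difference is that you spell out explicitly why the off-diagonal entries vanish, which the paper leaves implicit.
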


\begin{proof}
	
	The following mapping corresponds to killing bi-linear form of Lie group $S_{\mathbb{K}}$:
	\begin{equation*}
	\begin{array}{cc}
	\mathcal{K}: & T_{S_{\mathbb{K}}}\left( e\right) \times T_{S_{\mathbb{K}%
	}}\left( e\right) \rightarrow T_{S_{\mathbb{K}}}\left( e\right) \hspace*{4cm} \\ 
	& \left( X,Y\right) \rightarrow \mathcal{K}\left( X,Y\right) =-8f\left(
	X,Y\right)%
	\end{array}%
	\end{equation*}%
	and since $T_{S_{\mathbb{K}}}\left( e\right) \cong Sp\left\{
	e_{1},e_{2},e_{3}\right\} $
	\begin{equation*}
	K=\left[ 
	\begin{array}{ccc}
	\mathcal{K}\left( e_{1},e_{1}\right) & \mathcal{K}\left( e_{1},e_{2}\right)
	& \mathcal{K}\left( e_{1},e_{3}\right) \\ 
	\mathcal{K}\left( e_{2},e_{1}\right) & \mathcal{K}\left( e_{2},e_{2}\right)
	& \mathcal{K}\left( e_{2},e_{3}\right) \\ 
	\mathcal{K}\left( e_{3},e_{1}\right) & \mathcal{K}\left( e_{3},e_{2}\right)
	& \mathcal{K}\left( e_{3},e_{3}\right)%
	\end{array}%
	\right]
	\end{equation*}%
	is achieved. Therefore 
	\begin{eqnarray*}
		K &=&\left[ 
		\begin{array}{ccc}
			-8\lambda _{1}\lambda _{2} & 0 & 0 \\ 
			0 & -8\lambda _{1}\lambda _{3} & 0 \\ 
			0 & 0 & -8\lambda _{2}\lambda _{3}%
		\end{array}%
		\right] \\
		&=&-8\varepsilon
	\end{eqnarray*}%
	is obtained.
\end{proof}

% ------------------------------------------------------------------------

\subsection*{Acknowledgment}

This article has been obtained from the Ph.D. dissertation of Tuncay Deniz \c{S}ent%
\"{u}rk under the supervision of Zafer \"{U}nal.

% ------------------------------------------------------------------------


\begin{thebibliography}{99}
\bibitem{ham0} W. R. Hamilton, \textit{Researches respecting quaternions.}
First series, 1843. In Halberstam and Ingram \cite{hal}, Sec. 7, 159-226.
First published as \cite{ham3}.

\bibitem{ham3} W. R. Hamilton, \textit{Researches respecting quaternions.}
Transactions of the Royal Irish Academy \textbf{21} (1848), 199-296.

\bibitem{ham4} W. R. Hamilton, \textit{On a new species of Imaginary
quantities connected with the theory of quaternions}. Proceedings of the
Royal Irish Academy \textbf{2} (1844), 424-434.

\bibitem{ham1} W. R. Hamilton, \textit{Lectures on Quaternions.} Hodges and
Smith, Dublin, 1853.

\bibitem{ham5} W. R. Hamilton, \textit{Elements of Quaternions.} London,
U.K.:Longmans Green, 1866.

\bibitem{ham2} W. R. Hamilton, \textit{On the geometrical interpretation of
some results obtained by calculation with biquaternions.} In: Halberstam and
Ingram \cite{hal}, Sec. 35, 424-425. First Published in preceedings of the
Royal Irish Academy, 1853.

\bibitem{hal} R.E. Halberstam, Ingram (Eds.), \textit{The mathmetical Papers
of Sir William Rowan Hamilton.} 3 Algebra, Cambridge University Press,
Cambridge, 1967.

\bibitem{war} J. P. Ward, \textit{Quaternions and Cayley Numbers Algebra and
Applications.} Kluwer Academic Publishers, London 1997, 54-102.

\bibitem{coc} J. Cockle, \textit{On Systems of algebra involving more than
one imaginary; and on equations of the fifth degree.} Philosophical Magazine 
\textbf{35:238} (1849), 434-437.

\bibitem{clif} W. Clifford, \textit{Preliminary sketch of biquaternions.}
Proc. of London Math. Soc. 10, 1873.

\bibitem{dic} L. E. Dickson, \textit{On the Theory of Numbers and Generalized Quaternions.} American Journal of Mathematics \textbf{46(1)} (1924), 1-16.

\bibitem{gri} L. W. Griffiths, \textit{Generalized Quaternion Algebras and the Theory of Numbers.} American Journal of Mathematics \textbf{50(2)} (1928), 303-314.

\bibitem{sen} T. D. \c{S}ent\"{u}rk, A. Da\c{s}demir, G. Bilgici and Z. \"{U}%
nal, \textit{On Unrestricted Horadam Generalized Quaternions.} Utilitas
Mathematica \textbf{110} (2019), 89-98.

\bibitem{ahm} A. Da\c{s}demir, G. Bilgici, \textit{Gaussian Mersenne numbers
and generalized Mersenne quaternions.} Notes on Number Theory and Discrete
Mathematics \textbf{25(3)} (2019), 87-96. Doi: 10.7546/nntdm.2019.25.3.87-96

\bibitem{tok} \"{U}. Toke\c{s}er, Z. \"{U}nal, G. Bilgici, \textit{Split
Pell and Pell-Lucas Quaternions.} Advances in Applied Clifford Algebras 
\textbf{27(2)} (2017), 1881-1893.

\bibitem{bil} G. Bilgici, \"{U}. Toke\c{s}er, Z. \"{U}nal. \textit{\
k-Fibonacci and k-Lucas Generalized Quaternions.} Konuralp Journal of
Mathematics \textbf{5(2)} (2017) 102-113.

\bibitem{tas} D. Ta\c{s}c\i, F. Yal\c{c}\i n, \textit{Fibonacci-p
Quaternions.} Advances in Applied Clifford Algebras \textbf{25(1)} (2015),
245-254.

\bibitem{swa} M. N. S. Swamy, \textit{On generalized Fibonacci quaternions.}
The Fibonacci Quarterly \textbf{11(5)} (1973), 547-550.

\bibitem{hor} A. F. Horadam, \textit{Complex Fibonacci numbers and Fibonacci
quaternions.} The American Mathematical Monthly \textbf{70(3)} (1963),
289-291.

\bibitem{kar} A. Karger, J. Novak, \textit{Space kinematics and Lie groups},
Gordon and science publishers, 1985.

\bibitem{jaf1} M. Jafari, Y. Yayl\i, \textit{Generalized Quaternions and
Their Algebraic Properties.} Commun. Fac. Sci. Univ. Ank. Series A1 \textbf{%
64(1)} (2015), 15-27. Doi: 10.1501/Commual\_0000000724.

\bibitem{cho} E. Cho, \textit{De Moivre's Formula for Quaternions.} Applied
Mathematics Letters \textbf{11(6)} (1998), 33-35.

\bibitem{kab} H. Kabaday\i, Y. Yayl\i, \textit{De Moivre's Formula for Dual
Quaternions.} Kuwait journal of science and technology \textbf{38(1)}
(2011), 15-23.

\bibitem{ozd} M. \"{O}zdemir, \textit{The roots of a Split Quaternion.}
Applied mathmematics letters \textbf{22} (2009), 258-263.

\bibitem{jaf2} M. Jafari, M. Meral, Y. Yayl\i . \textit{Matrix
representation of dual quaternions.} Gazi University journal of science 
\textbf{26(4)} (2013), 535-542.

\bibitem{jaf3} M. Jafari, H. Mortazaasl, Y. Yayl\i, \textit{De Moivre's
Formula for Matrices of Quaternions.} JP Journal of Algebra, Number Theory
and Applications \textbf{21(1)} (2011), 57-67.

\bibitem{mer} M. Meral, \textit{Kuaterniyonlara ait matrisler i\c{c}in
De'Moivre ve Euler Form\"{u}lleri.} Y\"{u}ksek Lisans Tezi, Ankara \"{U}%
niversitesi 2009.

\bibitem{jaf4} A. B. Mamagani, M. Jafari, \textit{On Properties of
Generalized Quaternions Algebra.} Journal of Novel Applied Sciences \textbf{%
2(12)} (2013), 683-689.

\bibitem{agr} O. P. Agrawal, \textit{Hamilton operators and dual-number
quaternions in spatial kinematics.} Mechanism and machine theory \textbf{22(6%
}) (1987), 569-575.

\bibitem{jaf5} M. Jafari, Y. Yayl\i, \textit{Hamilton operators and
generalized quaternions.} 8.Geometri Sempozyumu, Akdeniz \"{U}niversitesi,
Antalya 2010.

\bibitem{jaf6} M. Jafari, \textit{Matrices of Generalized Dual Quaternions.}
Konuralp Journal of Mathematics \textbf{3(2)} (2015), 110-121.

\bibitem{olm} O. \"{O}lmez, \textit{Genelle\c{s}tirilmi\c{s} Kuaterniyonlar
ve Uygulamalar\i.} Y\"{u}ksek Lisans Tezi, Ankara \"{U}niversitesi 2006.
\end{thebibliography}
\end{document}